\documentclass[a4paper]{amsart}

\usepackage[%
backend=bibtex,%
style=alphabetic,%
natbib=false,%
isbn=false, 
doi=false, 
url=false, 
eprint=true, 
giveninits=true, 
backref=false, 
maxbibnames=99,%
]{biblatex}%

\AtEveryBibitem{%
  \ifentrytype{unpublished}{}{\clearfield{note}}%
  }%
\renewbibmacro{in:}{} 

\usepackage{amssymb,amsmath}%
\usepackage{doi}%
\usepackage{enumitem}%
\usepackage{eucal}%
\usepackage{graphicx}%
\usepackage{mathscinet}%
\usepackage{mathtools}%
\usepackage{tikz-cd}%
\usepackage{xparse}%

\usepackage{hyperref}%
\hypersetup{%
  colorlinks,%
  linkcolor={red!80!black},%
  citecolor={blue!80!black},%
  urlcolor={blue!80!black}%
}%

\usepackage{macros}

\begin{document}

\title%
[Stable $\infty$-categories for representation theorists]%
{Stable $\infty$-categories\\ for representation theorists}%

\author[G.~Jasso]{Gustavo Jasso}%

\address{%
  Mathematisches Institut, %
  Universität zu Köln, %
  Weyertal 86-90, %
  50931 Köln, %
  Germany}%

\email{gjasso@math.uni-koeln.de}%
\urladdr{https://gustavo.jasso.info}%

\keywords{Stable $(\infty,1)$-categories;
  triangulated categories; derived categories.}%

\subjclass[2020]{Primary: 18N60. Secondary: 18G80}%

\begin{abstract}
  This survey is intended as an invitation to the theory of stable $\infty$-categories, addressed
primarily to mathematicians working in the representation theory of algebras and
related subjects.


\end{abstract}

\maketitle

\tableofcontents

\section*{Introduction}

This survey is intended as an invitation to the theory of stable
$\infty$-categories, addressed primarily to mathematicians working in the
representation theory of algebras and related subjects. We assume familiarity
with triangulated categories, but no background in algebraic topology or
homotopy theory is required. Familiarity with differential graded categories or
$A_\infty$-categories can help guide intuition, but it is not necessary for
following the exposition.

Similar to differential graded categories, stable $\infty$-categories can be
viewed as enhancements or, more precisely, refinements of triangulated
categories. A key difference is that differential graded categories enhance
triangulated categories of algebraic origin, whereas stable $\infty$-categories
naturally accommodate the broader class arising in topology. For readers
primarily interested in algebraic triangulated categories, this distinction may
be secondary. Instead, we focus on one of the primary advantages of stable
$\infty$-categories: Namely, that they are embedded in a robust
higher-categorical framework. In practice, this often makes conceptual and
categorical arguments easier to implement than in the settings of triangulated
or differential graded categories. This comes at the cost of the technical
complexity of the foundations of $\infty$-category theory, which we treat
largely as a black box in this survey.

This survey compiles results on $\infty$-categories and stable
$\infty$-categories, most of them drawn almost verbatim from Lurie's
foundational works~\cite{Lur09,Lur17,Lur18SAG}. We have selected topics that
highlight central features of the theory and that may be of particular interest
to representation theorists. Our main omission is a discussion of linear
structures on stable $\infty$-categories, a topic we hope to address elsewhere;
extensive information can be found in~\cite[Appendix~D]{Lur18SAG}.

We provide relatively little in the way of motivation or explicit examples.
Instead, we assume that the reader is already convinced of the usefulness of
triangulated categories and has examples of interest in mind. While some of the
most striking applications of stable $\infty$-categories so far occur in stable
homotopy theory, derived algebraic geometry, tensor-triangular geometry, and
number theory, in representation theory of algebras the theory has also enabled
important advances in cluster categorification in relation to Fukaya categories
of surfaces by Christ and
collaborators~\cite{Chr21a,Chr22,Chr22a,CHQ23,CHQ24,Chr25a,Chr25b,Chr25} as well
as the construction of very general forms of reflection functors in joint work
with Dyckerhoff and Walde~\cite{DJW19b,DJW21}, with the latter being used to
study (spectral) derived Picard groups by S{\'a}nchez in~\cite{San25}.

This survey is structured as follows. In \Cref{sec:primer} we introduce the most
basic aspects of the theory of $\infty$-categories without delving into its
technical foundations. \Cref{sec:stable-cats} focuses on stable
$\infty$-categories, which are the main objects of interest in this article.
After recalling their definition and first properties, we discuss the notion of
idempotent completeness and later explain, using results of Cisinski, how to
construct stable $\infty$-categories from Frobenius exact categories. We then
outline some of the important features of the theory of compactly-generated and
presentable stable $\infty$-categories. Returning to the small setting, we
discuss smooth and proper stable $\infty$-categories. We also discuss
$t$-structures and realisation functors; the latter are obtained from the
universal properties enjoyed by derived $\infty$-categories. We also explain a
relation between ($t$-)injective objects in the sense of Lurie and large
cosilting objects. In the final part of this section we study recollements of
stable $\infty$-categories and illustrate the robustness of the theory in this
context.

\section{Primer on $\infty$-category theory}
\label{sec:primer}

This section consist of a brief introduction to the most basic ideas in
$\infty$-category theory; here, by $\infty$-category we mean a quasi-category in
the sense of Boardman and Vogt~\cite{BV73} (a kind of simplicial set). The
theory of $\infty$-categories has been developed extensively by
Joyal~\cite{Joy02,Joy08}, Lurie~\cite{Lur09,Lur17} and several others. By now,
there are textbook accounts of the foundational aspects of
theory~\cite{Cis19,Lan21,kerodon}, as well as excellent surveys~\cite{ACam16}
and more leisurely introductions~\cite{Gro20} that we wholeheartedly recommend
to the reader. It is also worth mentioning ongoing efforts by various groups of
mathematicians to develop model-independent\footnote{For a comparison of various
  models for the theory of $(\infty,1)$-categories, we refer the reader
  to~\cite{Ber18} (see also~\cite{Toe05}).} axiomatic approaches to the theory
of $\infty$-categories, see for example~\cite{RV22,CCW}. In this section we take
a naive approach and do not give the precise definition of an $\infty$-category
at all. Instead, we hope to provide the reader with a modicum of intuition
concerning the main features and peculiarities of the theory that we think
should be sufficient for appreciating the main concept that is discussed in this
survey, namely that of a stable $\infty$-category.

\subsection{From sets to $\infty$-groupoids}

In ordinary category theory, sets and functions are the most fundamental
objects. After all, the very notion of a category is obtained by abstracting the
associativity and unitality constraints of function composition. In contrast,
the most fundamental objects in $\infty$-category theory are the so-called
\emph{$\infty$-groupoids}, the precise definition of which is somewhat
technical, but the notion itself can be approached in a rather intuitive way.

Let us denote by $\Grpd$ the ordinary category of small groupoids. Recall that
there is an adjunction
\[
  \begin{tikzcd}
    \pi_0\colon\Grpd\rar[shift left]&\Set,\lar[shift left,hook']
  \end{tikzcd}
\]
where the left-adjoint functor $X\mapsto\pi_0(X)$ associates to a groupoid $X$
its set of \emph{connected components} (=isomorphism classes). Here, we
interpret sets with groupoids with only identity morphisms. From this
perspective, a groupoid $X$ `enhances' its set of objects by providing
additional information in the form of non-identity isomorphisms between its
objects, which are called the \emph{points} of the groupoid in this context.
Each groupoid $X$ admits a canonical decomposition
\[
  X\cong\coprod_{[x]\in\pi_0(X)}X[x],
\]
into its connected components
\[
  X[x]\coloneq\set{y\in X}[y\cong x].
\]
Moreover, up to equivalence, a connected component $X[x]$ of $X$ is determined by its
\emph{fundamental group}
\[
  \pi_1(X,x)\coloneqq X(x,x),
\]
in the sense that the inclusion of the point $x\in X$ induces an equivalence
between its connected component $X[x]$ and the groupoid with a single object $x$
with automorphism group $\pi_1(X,x)$. Thus, we can think of a groupoid as a
collection of points with a preferred notion of equivalence between them, namely
that of isomorphism.

Roughly speaking, an $\infty$-groupoid is some sort of generalisation of an
ordinary groupoid equipped not only with a notion of equivalence between its
points, but also with a notion of `equivalence between equivalences', a notion
of `equivalence between equivalences between equivalences', etc. Taking this
heuristic to its limit, given points $x$ and $y$ in an $\infty$-groupoid $X$,
there is an $\infty$-groupoid $X(x,y)$ whose points are the equivalences
$x\stackrel{\sim}{\to}y$. To an $\infty$-groupoid $X$ we may thus associate its
connected components $\pi_0(X)$. Moreover, if we define the \emph{loop space}
\[
  \Omega(X,x)=\Omega^1(X,x)\coloneqq X(x,x),\qquad x\in X,
\]
and, inductively,\footnote{Here, we abuse notation and identify the canonical
  isomorphism $\id[x]\in\Omega(X,x)$ with the point $x\in X$, compare
  with~\cite[Section~3.8.1]{Cis19}.}
\[
  \Omega^{n+1}(X,x)\coloneqq \Omega(\Omega^n(X,x),x),\qquad n\geq1,
\]
we obtain the \emph{$n$-th homotopy group}
\[
  \pi_n(X,x)\coloneqq\pi_0(\Omega^n(X,x)),\qquad n\geq1.
\]
Again, each $\infty$-groupoid $X$ admits a canonical decomposition
\[
  X\simeq\coprod_{[x]\in\pi_0(X)}X[x]
\]
into its connected components, and all points in the same connected component
have (non-canonically) isomorphic homotopy groups. The use
of terminology from homotopy theory is justified by the fact that, up to the
appropriate notion of equivalence, every $\infty$-groupoid arises as the
so-called \emph{Poincar{\'e} $\infty$-groupoid}\footnote{Materialised as the
  singular simplicial set in the implementation of $\infty$-categories as
  quasicategories~\cite[Remark~7.8.11]{Cis19}.} of a CW complex that is,
moreover, unique up to homotopy equivalence; this is part of the content of
Grothendieck's famous Homotopy Hypothesis~\cite{Gro22}, which is either a
tautology or a theorem depending on the precise definition of an
$\infty$-groupoid that one chooses.\footnote{We do not recall the precise stament
  as it does not play a role in the sequel.} In particular, the homotopy groups
$\pi_n(X,x)$, $n\geq1$, are indeed groups, and these groups are abelian whenever
$n\geq2$~\cite[Proposition~3.8.2]{Cis19}. For this reason, it is also customary
to refer to $\infty$-groupoids as \emph{spaces}, and we shall conflate these two
terms in the sequel.\footnote{Another equivalent term, introduced and
  popularised by Clausen and Scholze, is \emph{anima}.} A standard reference for
the homotopy theory of $\infty$-groupoids, implemented as Kan complexes, is the
textbook by Goerss and Jardine~\cite{GJ99}, which approaches the concept from an
algebro-topological perspective. The higher-categorical properties of
$\infty$-groupoids are discussed in detail in~\cite{Cis19} and~\cite{kerodon}.

\subsection{Basic $\infty$-category theory}

The collection $\Grpd_\infty$ of all small $\infty$-groupoids, while no longer
an $\infty$-groupoid, is yet another kind of mathematical object called an
$\infty$-category. The crucial point here is that the maps $X\to Y$ between
$\infty$-groupoids $X$ and $Y$ are the points of an $\infty$-groupoid
$\Map{X}{Y}$.

More generally, an $\infty$-category $\C$ is a category-like structure in which,
for each pair of objects $x,y\in\C$, there is an $\infty$-groupoid or, rather,
space of maps $\Map[\C]{x}{y}$. The upshot is that, given a map $f\colon x\to y$
in $\C$, we may consider the homotopy groups
\[
  \pi_n(\Map[\C]{x}{y},f),\qquad n\geq1,
\]
which encode information about the automorphisms of the map $f\colon x\to y$
itself, as well as the set of connected components
\[
  \Hom[\C]{x}{y}=\Hom[\Ho(\C)]{x}{y}\coloneqq\pi_0(\Map[\C]{x}{y}).
\]
The latter notation is justified by the fact that there is an ordinary category
$\Ho[\C]$, called the \emph{homotopy category} of $\C$, with the same objects as
$\C$ and with the above sets of maps. The fact that $\Ho[\C]$ is an honest
category is a shadow of the fact that $\C$ is meant to be equipped with a
`coherently associative composition law' that is also unital in an appropriate
sense.\footnote{Here we are being dishonest, for the definition of an
  $\infty$-category as a quasi-category does not involve such a composition
  law, at least not explicitly. But the intuition is the right one.}

\begin{remark}
  Virtually all concepts and results from category theory extend to
  $\infty$-categories, although their proofs (and the formulation of the
  concepts themselves!) might be highly non-trivial in some cases. In
  particular, one may speak of functors, natural transformations, adjunctions,
  etc. As mentioned in the introduction, we take the theory of $\infty$-category
  as a black box from which we only extract some of its highlights.
\end{remark}

Given an $\infty$-category $\C$, there is a functor (of $\infty$-categories)
\[
  \C^\op\times\C\longrightarrow\Grpd_\infty,\qquad
  (x,y)\longmapsto\Map[\C]{x}{y},
\]
that associates to a pair of object of $\C$ its \emph{space of
  maps}.\footnote{The definition of $\infty$-category is peculiar in that the
  mapping space functor is notoriously difficult to construct, see for
  example~\cite[Section~5.8]{Cis19}.} When $\C$ is small, its transpose
\[
  \C\longrightarrow\Fun{\C^\op}{\Grpd_\infty},\qquad
  y\longmapsto\hat{y}\coloneqq\Map[\C]{-}{y}
\]
yields the \emph{Yoneda embedding}, whose target is the $\infty$-category of
functors from the \emph{opposite $\infty$-category} $\C^\op$ of $\C$ to the
$\infty$-category of $\infty$-groupoids. The Yoneda embedding is fully faithful,
which is to say that the induced maps of $\infty$-groupoids
\[
  \Map[\C]{x}{y}\stackrel{\sim}{\longrightarrow}\Map{\hat{x}}{\hat{y}},\qquad
  x,y\in\C,
\]
are \emph{invertible}, that is they are isomorphisms in the homotopy category
$\Ho(\Grpd_\infty)$, see~\cite[Proposition~5.1.3.1]{Lur09}
or~\cite[Theorem~5.8.13]{Cis19}. More generally, a functor between
$\infty$-categories $F\colon\C\to\D$ is an \emph{equivalence} if it is fully
faithful in the above sense and if the induced functor of ordinary categories
\[
  \Ho(F)\colon\Ho(\C)\to\Ho(\C),\qquad x\longmapsto F(x),
\]
is essentially surjective.

Since every set can be interpreted as a groupoid with only identity morphisms,
every ordinary category $\C$ can be interpreted as an $\infty$-category with
mapping spaces
\[
  \Map[\C]{x}{y}=\Hom[\C]{x}{y},\qquad x,y\in\C.
\]
In fact, the theory of $\infty$-category can be regarded as an enlargement of
the theory of ordinary categories in the following sense. Let us denote by
$\cat$ the ordinary category of small categories and by $\cat[\infty]$ the
$\infty$-category of small $\infty$-categories. There is an adjunction
\[
  \begin{tikzcd}
    \Ho\colon\cat[\infty]\rar[shift left]&\cat\lar[shift left,hook']
  \end{tikzcd}
\]
in which the right adjoint $\cat\hookrightarrow\cat[\infty]$ is fully
faithful~\cite[Proposition~3.3.14]{Cis19}.\footnote{In the implementation of the
  concept of $\infty$-category in terms of simplicial sets, the fully faithful
  functor $\cat\hookrightarrow\cat[\infty]$ is modelled by the nerve functor
  $\C\mapsto N(\C)$. Since we have deliberately avoided to give a rigorous
  definition of $\infty$-category in this survey, these technicalities can
  safely be ignored. In particular, we do not distinguish between an ordinary
  category and its nerve.} The above adjunction restricts to a further
adjunction
\[
  \begin{tikzcd}
    \Ho\colon\Grpd_\infty\rar[shift left]&\Grpd\lar[shift left,hook']
  \end{tikzcd}
\]
Here, it is necessary to mention that the (small) $\infty$-groupoids span a full
($\infty$-)subcategory of $\cat[\infty]$, and that an $\infty$-category $\C$ is
an $\infty$-groupoid if and only if its homotopy category $\C$ is an ordinary
groupoid~\cite[Corollary~1.4]{Joy02}.\footnote{This is in line with the usual
  nomenclature for higher categories. In our setting, $\infty$-categories are
  models for $(\infty,1)$-categories, that is categories with morphisms of
  dimension $n$ for all ${\infty\geq n\geq1}$ such that the morphisms of
  dimension $n>1$ are invertible. Similarly, $\infty$-groupoids model
  $(\infty,0)$-categories, that is $(\infty,1)$-categories in which \emph{all}
  morphisms are invertible.} Another important fact is the following
characterisation of the equivalences between $\infty$-groupoids.\footnote{The
  reader should compare this with the definition of a weak homotopy equivalence
  between topological spaces.}

\begin{theorem}[{\cite[Corollary~3.8.14]{Cis19}}]
  \label{thm:gpd-equivalences}
  Let $f\colon X\to Y$ be a functor between $\infty$-groupoids. Then, $f$ is an
  equivalence if and only if the induced map
  \[
    \pi_0(f)\colon\pi_0(X)\longrightarrow\pi_0(Y)
  \]
  is bijective and, for each point $x\in X$ and each integer $n\geq1$, the
  induced map
  \[
    \pi_n(f)\colon\pi_n(X,x)\longrightarrow\pi_n(Y,f(x))
  \]
  is an isomorphism of groups.
\end{theorem}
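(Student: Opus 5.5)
The forward direction is formal: an equivalence $f$ has an inverse $g$ up to natural isomorphism, and $\pi_0$, $\Omega^n$ and hence $\pi_n$ are functorial and invariant under natural isomorphism. Therefore $\pi_0(f)$ and $\pi_n(f)$ are bijections. For $\pi_n(f)$ at the point $x$ one uses the isomorphism $gf(x)\cong x$, which induces an isomorphism $\pi_n(X,gf(x))\cong\pi_n(X,x)$. The substance lies in the converse. I would prove it using the characterisation of equivalences recalled just before the statement: $f$ is an equivalence if and only if it is fully faithful and $\Ho(f)$ is essentially surjective.

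Essential surjectivity is immediate. The objects of $\Ho(Y)$ up to isomorphism form the set $\pi_0(Y)$, and $\pi_0(f)$ is surjective. For full faithfulness I must show that for all $x,x'\in X$ the map $f_{x,x'}\colon X(x,x')\to Y(fx,fx')$ is an equivalence of $\infty$-groupoids. Injectivity of $\pi_0(f)$ shows that $X(x,x')$ is empty exactly when $Y(fx,fx')$ is empty. If they are nonempty, I choose an equivalence $u\colon x\to x'$. Composition with $u$ and with $f(u)$ identifies $f_{x,x'}$, up to natural isomorphism, with $f_{x,x}\colon\Omega(X,x)\to\Omega(Y,fx)$. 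So it suffices to treat loop spaces.

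The key idea is induction on "how many homotopy groups are being tested". I apply the hypothesis to the map $\Omega f\colon\Omega(X,x)\to\Omega(Y,fx)$, which is again a functor between $\infty$-groupoids. First, $\pi_0(\Omega f)=\pi_1(f)$ is bijective. Second, $\pi_n(\Omega(X,x),\mathrm{id}_x)=\pi_{n+1}(X,x)$ by the inductive definition, so the higher homotopy groups at the base point $\mathrm{id}_x$ are isomorphisms. For any other point $\gamma\in\Omega(X,x)$, composition with $\gamma^{-1}$ gives a self-equivalence of $\Omega(X,x)$ sending $\gamma$ to $\mathrm{id}_x$. This self-equivalence is compatible with $f$ since $f$ preserves composition, so the homotopy groups at every point are also isomorphisms. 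Hence $\Omega f$ satisfies the hypotheses of the theorem again.

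The main obstacle is that this setup is circular unless one has a genuine inductive or limiting argument. Knowing that $\Omega f$ satisfies the same hypotheses does not yet prove it is an equivalence. What I would try first is to work in the Kan complex model. There, $\pi_n$-isomorphisms let one solve lifting problems against the boundary inclusions $\partial\Delta^n\hookrightarrow\Delta^n$ up to homotopy, skeleton by skeleton. This yields a homotopy inverse, via Whitehead's theorem for Kan complexes, as in~\cite{GJ99}. The remaining technical point is the identification of the homotopy groups defined above with the combinatorial ones, which I would take from~\cite{Cis19}. Then the statement follows from this identification together with the Kan-complex Whitehead theorem.
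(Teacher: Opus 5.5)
Your proposal is correct and in substance coincides with the paper. The paper gives no argument of its own: it cites \cite[Corollary~3.8.14]{Cis19}, which is Whitehead's theorem for Kan complexes combined with the comparison between the loop-space homotopy groups and the combinatorial ones. These are precisely the two ingredients your argument ultimately rests on. Your preliminary reduction to loop spaces is valid but, as you rightly observe, cannot close by itself: it is purely formal and would go through equally in settings where maps inducing isomorphisms on all homotopy groups fail to be equivalences, so all of the real content lies in the cited Kan-complex argument.
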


When applying $\infty$-categorical constructions to ordinary categories, such as
the formation of functor $\infty$-categories or of slice $\infty$-categories,
one often recovers the corresponding constructions from ordinary category
theory---but not always. An important example of a construction that can take us
outside of the $1$-categorical realm (and this is a good thing) is the
following. Below, we say that a morphism in $\C$ is
\emph{invertible}\footnote{One also calls such morphisms \emph{equivalences} or
  even \emph{isomorphisms}.} if its image in $\Ho(\C)$ is an isomorphism.

\begin{defprop}[{\cite[Remark~1.3.4.2]{Lur17}, see
    also~\cite[Proposition~7.1.3]{Cis19}}]
  \label{defprop:localisation}
  Let $\C$ be an $\infty$-category and $W$ a class of morphisms in $\C$.
  Set-theoretic issues aside, there exists an $\infty$-category $\C$ and a
  functor $\gamma\colon\C\to\C[W^{-1}]$ that sends all morphisms in $W$ to
  invertible morphisms in $\C[W^{-1}]$ and that, moreover, enjoys the following
  universal property: For each $\infty$-category $\D$, restriction along
  $\gamma$ induces an equivalence of $\infty$-categories
  \[
    \gamma^*\colon\Fun{\C[W^{-1}]}{\D}\stackrel{\sim}{\longrightarrow}\Fun[W]{\C}{\D},
  \]
  where $\Fun[W]{\C}{\D}\subseteq\Fun{\C}{\D}$ denotes the full subcategory
  spanned by the functors that send all morphisms in $W$ to invertible morphisms
  in $\D$. We call the $\infty$-category $\C[W^{-1}]$ the \emph{localisation} of
  $\C$ at $W$.
\end{defprop}

\begin{remark}
  \label{rmk:localisation}
  Let $\C$ be an ordinary category and $W$ a class of maps in $\C$. Then, the
  homotopy category $\Ho(\C[W^{-1}])$ of $\C[W^{-1}]$ is canonically equivalent
  to the $1$-categorical localisation of $\C$ at $W$, see for
  example~\cite[Remark~7.1.6]{Cis19}. Beware that, in general, the canonical
  functor $\C[W^{-1}]\to\Ho(\C[W^{-1}])$ is not an equivalence, the crucial
  point being that $\C[W^{-1}]$ often has better properties than its
  $1$-categorical counterpart (compare with~\Cref{rmk:limits-Ho}). To minimise
  the potential for confusion, in this context, we refer to $\C[W^{-1}]$ as the
  $\infty$-categorical localisation of $\C$ at $W$ and denote the corresponding
  $1$-categorical localisation exclusively by $\Ho(\C[W^{-1}])$. Particularly
  important are the $\infty$-categorical localisations of (combinatorial)
  Quillen model categories and related variants, see for
  example~\cite[Appendix~A.2]{Lur09} and~\cite[Chapter~7]{Cis19}.
\end{remark}

\begin{remark}
  Setting aside set-theoretic issues again, \emph{every} $\infty$-category is
  equivalent to the $\infty$-categorical localisation of an ordinary category,
  see~\cite{BK12}.
\end{remark}

The $\infty$-categorical notions of limit and colimit play a central role in our
forthcoming discussion of stable $\infty$-categories. We recall the necessary
definitions.

\begin{definition}
  An $\infty$-groupoid $X$ is \emph{contractible} if $\pi_0(X)=*$ and
  $\pi_n(X,x)=0$ for all $n\geq1$, where $x\in X$ is an arbitary
  point.\footnote{Notice that a contractible $\infty$-groupoid is non-empty!}
\end{definition}

\begin{remark}
  Intuitively, a contractible $\infty$-groupoid has an essentially unique point,
  and this point has no non-trivial automorphisms in any dimension. According to
  \Cref{thm:gpd-equivalences}, an $\infty$-groupoid $X$ is contractible if and
  only if the map $X\to *$ is an equivalence, where $*$ is any singleton. In
  other words, a contractible $\infty$-groupoid is just as good as a singleton.
\end{remark}

\begin{definition}[{\cite[Section~1.2.12]{Lur09}}]
  Let $\C$ be an $\infty$-category. An object $*\in\C$ is \emph{final} if, for
  each object $x\in\C$, the mapping space $\Map[\C]{X}{*}$ is contractible.
  \emph{Initial} objects are defined dually, as final objects in the opposite
  $\infty$-category $\C^\op$.
\end{definition}

\begin{remark}
  \label{rmk:uniqueness-final_objects}
  Let $\C$ be an $\infty$-category. The final objects of $\C$ form an
  $\infty$-groupoid that is either empty or
  contractible~\cite[Proposition~4.4]{Joy02}. In this sense, final objects of an
  $\infty$-category are essentially unique.
\end{remark}

\begin{example}
  Any singleton defines a final object of $\Grpd_\infty$ and, in fact, the final
  objects are precisely the contractible $\infty$-groupoids. The empty set is
  the initial $\infty$-groupoid.
\end{example}

\begin{definition}[{\cite[Sections~1.2.13 and~Chapter~4]{Lur09}}]
  Let $A$ be a small $\infty$-category (for example, a small category), $\C$ an
  $\infty$-category and $F\colon A\to \C$ a functor. A \emph{limit} of $F$ is a
  final object of the $\infty$-category $\C_{/F}$ of cones\footnote{As usual, a
    cone over $F$ with apex $x\in\C$ can be identified with a natural
    transformation $\operatorname{const}_A(x)\to F$ from the constant diagram
    $A\to \C$ with value $x$, see for example~\cite[Section~6.2.1]{Cis19}.} over
  $F$. Dually, a \emph{colimit} of $F$ is an initial object of the
  $\infty$-category $\C_{F/}$ of cones under $F$.
\end{definition}

\begin{remark}
  According to \Cref{rmk:uniqueness-final_objects}, limits and colimits, if they
  exist, are essentially unique.
\end{remark}

\begin{remark}
  The $\infty$-category $\Grpd_\infty$ of small $\infty$-groupoids admits all
  small limits and all small colimits, see~\cite[Section~5.1.2]{Lur09}
  or~\cite[Proposition~6.2.12 and Corollary~6.3.8.]{Cis19}.
\end{remark}

\begin{remark}
  If $\C$ is an ordinary category, then the $\infty$-categorical notions of
  (co)limits in $\C$ agree with the ones of ordinary category
  theory~\cite[Section~1.2.13]{Lur09}.
\end{remark}

\begin{remark}
  The functor $\pi_0\colon\Grpd_\infty\to\Set$ preserves small
  products~\cite[\href{https://kerodon.net/tag/00HD}{Corollary~00HD}]{kerodon}.
  Consequently, if an $\infty$-category $\C$ admits small (co)products then so
  does its homotopy category $\Ho(\C)$, see
  also~\cite[Proposition~6.5.5]{Cis19}.\footnote{Beware that the converse is
    false in general.}
\end{remark}

\begin{remark}
  An important remark concerning commutative diagrams in an $\infty$-category is
  in order. In an $\infty$-groupoid $X$, it is not relevant whether two of its
  points $x$ and $y$ are equal or not, what matters is whether there is an
  invertible map ${x\stackrel{\sim}{\to} y}$ in $X$. Similarly, in an
  $\infty$-category $\C$, it is not relevant whether two maps $f,g\colon x\to y$
  in $\C$ are equal, what matters is whether there is an invertible map
  ${f\stackrel{\sim}{\to} g}$ in the $\infty$-groupoid $\Map[\C]{x}{y}$. Hence,
  when we speak of a commutative square
  \[
    \begin{tikzcd}
      w\rar{f}\dar{g}&x\dar{g'}\\
      y\rar[swap]{f'}&z
    \end{tikzcd}
  \]
  in $\C$, what is meant is not just the data depicted but---and this is
  crucial---the additional data of an identification $g'\circ f\simeq f'\circ g$
  in the $\infty$-groupoid $\Map[\C]{w}{z}$. It is not enough to ask that the
  maps $g'\circ f$ and $f'\circ g$ lie in the same connected component of
  $\Map[\C]{w}{z}$, we must really choose an identification witnessing this
  fact. In other words, in $\infty$-category theory the commutativity of a
  diagram is not a property but part of its structure. Thus, in order construct
  a well-defined diagram $f\colon A\to \C$ indexed by some small
  $\infty$-category, we must specify a potentially infinite amount of
  higher-dimensional information in a manner that is not dissimilar to the data
  required to define an $A_\infty$-functor.\footnote{As it turns out, an
    $\infty$-category $A$ is determined by its $n$-simplices $\Delta^n\to A$,
    $n\geq0$, where $\Delta^n$ denotes the path category of the quiver
    $0\to1\to2\cdots\to n$, and a functor $A\to\C$ is determined by its action
    on all the simplices of $A$.} We advise the reader to keep this important
  technical subtlety in mind when pondering the commutative diagrams that appear
  later in this text.
\end{remark}

\begin{remark}
  \label{rmk:pi0-no-PB}
  The functor $\pi_0\colon\Grpd_{\infty}\to\Set$ does not preserve arbitrary
  pullbacks. Simple counterexamples arise as follows: Given an $\infty$-groupoid
  $X$ and a point $x\in X$, there is a pullback square
  \[
    \begin{tikzcd}
      \Omega(X,x)\rar\dar\ar[phantom]{dr}[description]{\text{PB}}&*\dar{x}\\
      *\rar[swap]{x}\rar& X
    \end{tikzcd}
  \]
  in the $\infty$-category $\Grpd_\infty$. However, the induced commutative
  diagram of sets
  \[
    \begin{tikzcd}
      \pi_0(\Omega(X,x))\rar\dar&*\dar\\
      *\rar&\pi_0(X)
    \end{tikzcd}
  \]
  is not a pullback diagram as soon as $\pi_0(\Omega(X,x))=\pi_1(X,x)$ has more
  than one element. This is the case, for example, if $X=BG$ is the ordinary
  groupoid with a single object $x$ whose automorphism group $\pi_1(X,x)=G$ is
  non-trivial.
\end{remark}

\begin{remark}
  \label{rmk:limits-Ho}
  Let $\C$ be an $\infty$-category. Limits in $\C$ can be characterised in terms
  of representability statements in the usual way. For example, suppose given a
  pullback diagram in $\C$ of the form
  \[
    \begin{tikzcd}
      x\times_zy\rar\dar\ar[phantom]{dr}[description]{\text{PB}}&x\dar\\
      y\rar&z.
    \end{tikzcd}
  \]
  Then, for each object $w\in\C$, the induced map of $\infty$-groupoids
  \[
    \Map[\C]{w}{x\times_z
      y}\stackrel{\sim}{\longrightarrow}\Map[\C]{w}{x}\times_{\Map[\C]{w}{z}}\Map[\C]{w}{y}
  \]
  is invertible. In particular, the induced map
  \[
    \Hom[\C]{w}{x\times_z y}=\pi_0(\Map[\C]{w}{x\times_z
      y})\stackrel{\sim}{\to}\pi_0(\Map[\C]{w}{x}\times_{\Map[\C]{w}{z}}\Map[\C]{w}{y})
  \]
  is bijective. However, as explained in \Cref{rmk:pi0-no-PB}, the canonical
  comparison map
  \[
    \begin{tikzcd}[row sep=small]
      \pi_0(\Map[\C]{w}{x}\times_{\Map[\C]{w}{z}}\Map[\C]{w}{y})\dar\\
      \pi_0(\Map[\C]{w}{x})\times_{\pi_0(\Map[\C]{w}{z})}\pi_0(\Map[\C]{w}{y})\dar[equals]\\\Hom[\C]{w}{x\times_zy}\times_{\Hom[\C]{w}{z}}\Hom[\C]{w}{y}
    \end{tikzcd}
  \]
  need not be bijective. This is the reason why, in general, the existence of
  (co)limits in an $\infty$-category, is not inherited by their homotopy
  categories (see also~\Cref{rmk:localisation}).
\end{remark}

In general, one has the following fundamental result. Below, we work in the
$\infty$-category $(\Grpd_\infty)_*=(\Grpd_{\infty})_{*/}$ of \emph{pointed
  $\infty$-groupoids}, that is $\infty$-groupoids with a prescribed basepoint.

\begin{theorem}[Serre's long exact sequence,~{\cite[Lemma~I.7.3]{GJ99}
    or~\cite[Theorem~3.8.12]{Cis19}}]
  \label{thm:Serre-les}
  Suppose given a map $f\colon (X,x_0)\to (Y,y_0)$ in the $\infty$-category of
  pointed $\infty$-groupoids. Consider a pullback diagram
  \[
    \begin{tikzcd}
      F\rar\dar\ar[phantom]{dr}[description]{\textup{PB}}&X\dar{f}\\
      *\rar[swap]{y_0}&Y
    \end{tikzcd}
  \]
  and notice that the point $x_0\in X$ defines a canonical point $\tilde{x}_0\in
  F$. Then, there is an induced long exact sequence of pointed sets (of groups
  in the range $n\geq1$)
  \[
    \begin{tikzcd}
      \cdots\rar&\pi_1(F,\tilde{x}_0)\rar&\pi_1(X,x_0)\rar&\pi_1(Y,y_0)\ar[out=-20,in=160]{dll}[description]{\partial}\\
      &\pi_0(F,\tilde{x}_0)\rar&\pi_0(X,x_0)\rar&\pi_0(Y,y_0),
    \end{tikzcd}
  \]
  where $\pi_0(X,x_0)=(\pi_0(X),[x_0])$ and similarly for the other pointed sets
  of connected components. Moreover, there is a natural right action of the
  group $\pi_1(Y,y_0)$ on the set $\pi_0(F)$ with the property that the the
  $\pi_1(Y,y_0)$-orbits are precisely the set-theoretic fibres of the induced
  map ${\pi_0(F)\to\pi_0(X)}$.\footnote{In terms of the $\pi_1(Y,y_0)$-action,
    the displayed connecting map is given by $\delta\colon [\varphi]\mapsto
    [\tilde{x_0}]\cdot[\varphi]$.}
\end{theorem}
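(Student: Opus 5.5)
Since the paper treats $\infty$-groupoids as Kan complexes via the Homotopy Hypothesis, I will reduce to the classical setting. Replace $f\colon X\to Y$ by an equivalent Kan fibration; for instance, factor $f$ as an equivalence followed by a fibration. In the $\infty$-category $\Grpd_\infty$ the pullback $F$ is then modelled by the strict fibre $p^{-1}(y_0)$. Fibration replacement and invariance of homotopy pullbacks under equivalences are standard black boxes. After this, $F$, $X$, $Y$ can be taken to be Kan complexes with $F=p^{-1}(y_0)$. By \Cref{thm:gpd-equivalences}, none of the homotopy groups change under this replacement.

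\textbf{Step 1: reduce higher levels to level $0$.} Apply the loop construction to the pullback square. Loops commute with pullbacks, since $\Omega(-,\ast)$ is itself a pullback and limits commute with limits. This gives pullback squares
\[
  \Omega^n(F,\tilde x_0)\simeq \Omega^n(X,x_0)\times_{\Omega^n(Y,y_0)}\ast .
\]
So the portion of the sequence in degrees $\geq n$ is the degree-$\geq 0$ portion for the looped map. It therefore suffices to prove two things.
\begin{enumerate}[label=(\roman*)]
  \item Exactness of $\pi_1(Y)\xrightarrow{\partial}\pi_0(F)\to\pi_0(X)\to\pi_0(Y)$ at $\pi_0(F)$ and at $\pi_0(X)$, for every map.
  \item Exactness at $\pi_1(X)$, namely $\pi_1(F)\to\pi_1(X)\to\pi_1(Y)$. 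This is (i) applied to $\Omega f$, together with compatibility of $\partial$ with looping.
\end{enumerate}
Group structures and homomorphy follow because the maps are induced by maps of pointed spaces. Naturality of the identification $\pi_0\Omega^n=\pi_n$ also needs checking.

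\textbf{Step 2: the action and level $0$.} Define the right action of $\pi_1(Y,y_0)$ on $\pi_0(F)$ by path lifting. Given a point $z\in F$ and a loop $\gamma$ at $y_0$, the Kan condition lifts $\gamma$ to a path in $X$ starting at $z$. Its endpoint lies in $F$; set $[z]\cdot[\gamma]$ to be the class of that endpoint.

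Well-definedness and independence of the lift come from lifting homotopies, that is, filling horns $\Lambda^2_i\to\Delta^2$ against $p$. The same argument shows this is a group action.

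The fibres of $\pi_0(F)\to\pi_0(X)$ are then analysed directly.
\begin{itemize}
  \item If $z,z'\in F$ are joined by a path $\alpha$ in $X$, then $p\alpha$ is a loop at $y_0$, and $\alpha$ is a lift of it, so $[z']=[z]\cdot[p\alpha]$.
  \item Conversely, points in the same orbit are joined in $X$ by the lifted path.
\end{itemize}
Taking $\partial[\varphi]=[\tilde x_0]\cdot[\varphi]$ gives exactness at $\pi_0(F)$, since $\ker(\pi_0F\to\pi_0X)$ is the orbit of $[\tilde x_0]$.

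Exactness at $\pi_0(X)$ also uses path lifting. If $p(x)$ is joined to $y_0$ by a path, lift that path starting at $x$. The lift ends at a point of $F$ in the same component as $x$.

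\textbf{Main obstacle.} The delicate point is the compatibility in Step 1 at the seam. I must verify that exactness at $\pi_1(X)$ and at $\pi_1(Y)$, for the unlooped map, really coincides with the level-$0$ exactness of the looped map. In particular, at $\pi_1(Y)$ the claim is that the kernel of $\partial$ is the image of $\pi_1(X)$, which is the stabiliser statement. I would prove this by hand rather than by looping: $[\tilde x_0]\cdot[\gamma]=[\tilde x_0]$ means the lift of $\gamma$ can be adjusted, by concatenating with a path in $F$, to a closed loop in $X$ mapping to $\gamma$ up to homotopy.
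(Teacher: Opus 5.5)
The paper gives no proof of its own here; it only cites Goerss--Jardine and Cisinski. Your route is the standard argument and is sound in outline: replace $f$ by a Kan fibration, let $\pi_1(Y,y_0)$ act on $\pi_0(F)$ by path lifting, prove the bottom exactness statements by hand, and propagate upwards using $\Omega^n F\simeq\Omega^n X\times_{\Omega^n Y}\ast$ together with the paper's definition $\pi_n=\pi_0\Omega^n$.

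The bookkeeping in Step~1 is muddled but easy to fix. Item~(ii) does not involve $\partial$ at all, since exactness at $\pi_1(X)$ is literally exactness at $\pi_0(\Omega X)$ for $\Omega f$. The ``compatibility of $\partial$ with looping'' disappears if you \emph{define} $\partial_n\colon\pi_{n+1}(Y)\to\pi_n(F)$ to be the bottom connecting map of $\Omega^n f$; the statement only pins $\partial$ down in the lowest degree. With that definition, exactness at $\pi_n(F)$, $\pi_n(X)$ and $\pi_{n+1}(Y)$ is exactly your three bottom statements applied to $\Omega^n f$ (again modelled by a Kan fibration). These are exactness at $\pi_0(F)$, exactness at $\pi_0(X)$, and the stabiliser statement at $\pi_1(Y)$ that you prove by hand. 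One more detail: after fibrant replacement the base point of the new total space only maps to $y_0$ up to a path. Lift that path once to move it into the strict fibre.

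The one claim your proposal does not actually establish is that $\partial_n$ is a group homomorphism for $n\geq1$. The statement asserts this (``of groups in the range $n\geq1$''). Your remark that homomorphy holds because the maps are induced by maps of pointed spaces covers $\pi_n(F)\to\pi_n(X)\to\pi_n(Y)$. It does not cover $\partial_n$, which you defined through the action $[\gamma]\mapsto[\tilde x_0]\cdot[\gamma]$.

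A clean repair is as follows. Paste the pullback defining $F$ with the one defining $\Omega(Y,y_0)=\ast\times_Y\ast$. This gives a pointed map $j\colon\Omega(Y,y_0)\to F$, induced by $x_0\colon\ast\to X$, which exhibits $\Omega(Y,y_0)$ as the fibre of $F\to X$. Check that $\pi_0(j)$ agrees with your action-defined $\partial_0$ up to inverting loops. Then, up to the harmless identification $\Omega\Omega^n\simeq\Omega^n\Omega$, $\partial_n$ is induced by the pointed map $\Omega^n j$, hence is a homomorphism for $n\geq1$. Alternatively, an Eckmann--Hilton argument using the loop multiplication on $\Omega^n X$, which $\Omega^n f$ respects, also works.

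As a bonus, iterating the pasting shows that in $\cdots\to\Omega X\to\Omega Y\to F\to X\to Y$ each map is the fibre of the next. Here $\Omega f$ may be twisted by loop inversion, which changes neither images nor kernels. So every exactness statement, including your stabiliser statement, becomes an instance of exactness at $\pi_0$ of the middle term of a fibre sequence. Only the orbit description still genuinely needs the path-lifting action.
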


\begin{example}
  In the case of ordinary groupoids, \Cref{thm:Serre-les} is
  elementary~\cite[Section~V.2]{GZ67}: Given a functor between pointed
  groupoids ${f\colon(X,x_0)\to(Y,y_0)}$, define the \emph{homotopy fibre} of $f$
  at $y_0$ to be the groupoid $F$ whose objects are the pairs
  \[
    (x,\varphi)=(x\in X,\ \varphi\colon y_0\stackrel{\sim}{\to} f(x)\text{ in
    }Y);
  \]
  a morphism $u\colon (x,\varphi)\to(x',\varphi)$ consists of a morphism
  $u\colon x\to x'$ in $X$ such that the following diagram in $Y$ commutes:
  \[
    \begin{tikzcd}
      y_0\dar[equals]\rar{\varphi}&f(x)\dar{f(u)}\\
      y_0\rar{\varphi'}&f(x').
    \end{tikzcd}
  \]
  Let $\tilde{x}_0\coloneqq(x_0,\id[y_0])\in F$. The construction of the exact
  sequence exact sequence
  \[
    \begin{tikzcd}
      *\rar&\pi_1(F,\tilde{x}_0)\rar&\pi_1(X,x_0)\rar&\pi_1(Y,y_0)\ar[out=-20,in=160]{dll}[description]{\partial}\\
      &\pi_0(F,\tilde{x}_0)\rar&\pi_0(X,x_0)\rar&\pi_0(Y,y_0)
    \end{tikzcd}
  \]
  is then a nice exercise that we leave to the reader.
\end{example}

\section{Lurie's stable $\infty$-categories}
\label{sec:stable-cats}

In this section we focus our attention on the class of stable
$\infty$-categories that was introduced by Lurie in his doctoral
dissertation~\cite{Lur04}; the vast majority of the results in our compilation
are due to him.

\subsection{Definition and first properties}

\begin{definition}[{\cite[Definition~1.1.1.9.]{Lur17}}]
  An $\infty$-category $\C$ is \emph{stable} if it has the following properties:
  \begin{enumerate}
  \item The $\infty$-category $\C$ is \emph{pointed}, that is there exists an
    object $0\in\C$ that is both final and initial (and thus a \emph{zero
      object}.)
  \item Let $f\colon x\to y$ be a morphism in $\C$. Then, there exists diagrams
    \[
      \begin{tikzcd}
        w\rar\dar\ar[phantom]{dr}[description]{\text{PB}}&x\dar{f}\\
        0\rar&y
      \end{tikzcd}\qquad\text{and}\qquad%
      \begin{tikzcd}
        x\rar{f}\dar\ar[phantom]{dr}[description]{\text{PO}}&y\dar\\
        0\rar&z
      \end{tikzcd}
    \]
    where the square on the left is cartesian (=pullback square) and the square
    on the right is cocartesian (=pushout square).
  \item A square in $\C$ of the form
    \[
      \begin{tikzcd}
        x\rar\dar&y\dar\\
        0\rar&z
      \end{tikzcd}
    \]
    is cartesian if and only if it is cocartesian.
  \end{enumerate}
\end{definition}

\begin{remark}
  Being stable is a \emph{property} of an $\infty$-category. This is in stark
  contrast with the situation for triangulated categories, for which both the
  suspension functor and the triangulation are additional, non-canonical
  \emph{structures}. From this perspective, stable $\infty$-categories are
  somewhat similar to abelian categories.
\end{remark}

\begin{remark}
  Stability is a self-dual property: An $\infty$-category $\C$ is stable if and
  only if its opposite $\C^\op$ is stable.
\end{remark}

The following basic result, whose analogue for triangulated categories is false
in general, is an immediate consequence that limits and colimits in functor
$\infty$-categories are computed pointwise~\cite[Corollary~5.1.2.3]{Lur09}.

\begin{proposition}[{\cite[Proposition~1.1.3.1.]{Lur17}}]
  \label{prop:stable_functor_categories}
  Let $A$ be a small $\infty$-category and $\C$ a stable $\infty$-category.
  Then, the $\infty$-category $\Fun{A}{\C}$ of functors $A\to\C$ is stable.
\end{proposition}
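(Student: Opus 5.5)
The plan is to verify the three axioms of stability for $\Fun{A}{\C}$ directly, using only that limits and colimits in functor $\infty$-categories exist and are computed pointwise whenever they exist pointwise~\cite[Corollary~5.1.2.3]{Lur09}. Concretely, I will use the following form of this fact. A diagram $K\to\Fun{A}{\C}$ (equivalently a functor $K\times A\to\C$) admits a limit (resp. colimit) provided each restriction $K\to\C$ at an object $a\in A$ does. Moreover, a cone over such a diagram is a limit cone if and only if its evaluation at every $a\in A$ is a limit cone in $\C$, and dually for colimits.

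First, the zero object. Let $0\colon A\to\C$ be the constant functor with value a zero object $0\in\C$. A final object is a limit of the empty diagram, so the pointwise criterion applied to $K=\emptyset$ shows that $0$ is final in $\Fun{A}{\C}$, since each evaluation is final in $\C$. Dually, it is initial, so $\Fun{A}{\C}$ is pointed. Note that evaluation $\mathrm{ev}_a\colon\Fun{A}{\C}\to\C$ carries this zero object to $0\in\C$.

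Second, fibres and cofibres. Given a morphism $f\colon F\to G$ in $\Fun{A}{\C}$, consider the diagram $0\to G\leftarrow F$ indexed by the cospan. Pointwise it is $0\to G(a)\leftarrow F(a)$, which admits a pullback in $\C$ by stability of $\C$. Hence the pullback exists in $\Fun{A}{\C}$. Dually, the pushout of $0\leftarrow F\to G$ exists.

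Third, the equivalence of cartesian and cocartesian squares. Take a square in $\Fun{A}{\C}$ whose lower-left corner is the zero object. By the pointwise criterion, it is cartesian if and only if each evaluated square in $\C$ is cartesian. Each evaluated square has lower-left corner $0\in\C$, so by stability of $\C$ it is cartesian if and only if it is cocartesian. Applying the pointwise criterion once more, this holds for all $a$ if and only if the original square is cocartesian. A small point I need to handle is a square whose corner is merely equivalent to a zero object rather than equal to the chosen one; this is harmless, since zero objects are essentially unique (\Cref{rmk:uniqueness-final_objects}) and the notions involved are invariant under equivalence.

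The only real obstacle is the precise formulation of the pointwise criterion, in particular the "detection" direction: a cone that is pointwise a limit is a limit. This is the content of the cited result~\cite[Corollary~5.1.2.3]{Lur09}, which I take as a black box, as the paper does. Everything else is formal bookkeeping.
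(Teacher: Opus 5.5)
Your proposal is correct and takes the paper's route: the paper only remarks that the result is an immediate consequence of limits and colimits in $\Fun{A}{\C}$ being computed pointwise~\cite[Corollary~5.1.2.3]{Lur09}. You spell this out by checking each of the three stability axioms objectwise, including the harmless point that any zero object of $\Fun{A}{\C}$ evaluates to a zero object of $\C$.
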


Before going any further, we explain how stable $\infty$-categories relate to
triangulated categories. We begin by introducing the necessary terminology.

\begin{defprop}
  Let $\C$ be a pointed $\infty$-category such that, for each object $x\in\C$,
  there exist squares
  \[
    \begin{tikzcd}
      x\rar\dar\ar[phantom]{dr}[description]{\text{PO}}&0\dar\\
      0\rar&\Sigma(x)
    \end{tikzcd}\qquad\text{and}\qquad%
    \begin{tikzcd}
      \Omega(x)\rar\dar\ar[phantom]{dr}[description]{\text{PB}}&0\dar\\
      0\rar&x
    \end{tikzcd}
  \]
  where the square on the left is cocartesian and the square on the right is
  cartesian. We call $\Sigma(x)$ the \emph{suspension} of $x$ and $\Omega(x)$
  the \emph{loops} of $x$. The constructions $x\mapsto\Sigma(x)$ and
  $x\mapsto\Omega(x)$ yield a pair of adjoint functors\footnote{This can be
    shown easily using the pointwise formulas for left and right Kan extensions,
    see~\cite[Section~4.3.2]{Lur09} and~\cite[Remark~1.1.1.7]{Lur17} for details.}
  \[
    \begin{tikzcd}
      \Sigma\colon\C\rar[shift left]&\C\noloc\Omega\lar[shift left],
    \end{tikzcd}
  \]
  where $\Sigma$ is left adjoint to $\Omega$.
\end{defprop}

\begin{remark}
  An ordinary category $\C$ is stable if and only if $\C$ is the zero category.
  Indeed, by definition, for every object $x\in\C$ in a stable $\infty$-category
  there are canonical identifications
  \[
    x\simeq\Omega(\Sigma(x))\qquad\text{and}\qquad x\simeq\Sigma(\Omega(x)),
  \]
  see also~\Cref{thm:equivalent-def-stable}. If $\C$ is an ordinary category, we
  clearly have $\Sigma(x)=0$ and $\Omega(x)=0$, so that $x=0$ as well. This
  shows that stability is genuinely an $\infty$-categorical notion.
\end{remark}

\begin{defprop}
  Let $\C$ be a stable $\infty$-category. We call a bicartesian\footnote{That
    is, a square that is both cartesian and cocartesian.} square of the form
  \[
    \begin{tikzcd}
      x\rar{f}\dar\ar[phantom]{dr}[description]{\square}&y\dar{g}\\
      0\rar&z
    \end{tikzcd}
  \]
  a \emph{fibre-cofibre sequence}, where the square is simply a decoration. The
  object $x$ is called the \emph{fibre} of $g$ and is usually denoted $\fib[g]$;
  dually, the object $z$ is called the \emph{cofibre} of $f$ and is usually
  denoted $\cofib[f]$. The passage from a morphism to its (co)fibre is
  functorial: There are functors\footnote{These functors are exact in the sense
    of \Cref{defprop:exact_functors} since, by construction, they admit
    adjoints, see also~\cite[Proposition~5.2.3.5]{Lur09} and~\cite[Remar~1.1.1.8]{Lur17}.}
  \[
    \fib\colon\Fun{s\to t}{\C}\longrightarrow\C,\qquad f\mapsto\fib[f],
  \]
  and
  \[
    \cofib\colon\Fun{s\to t}{\C}\longrightarrow\C,\qquad f\mapsto\cofib[f].
  \]
  By considering the extended diagram\footnote{By Pasting Lemma for bicartesian
    squares~\cite[Lemma~4.4.2.1]{Lur09}, the outer rectangle is also
    bicartesian. Therefore, the object in the bottom-right corner can be
    canonically identified with $\Sigma(x)$.}
  \[
    \begin{tikzcd}
      x\rar{f}\dar\ar[phantom]{dr}[description]{\square}&y\dar{g}\rar\ar[phantom]{dr}[description]{\square}&0\dar\\
      0\rar&z\rar&\Sigma(x)
    \end{tikzcd}
  \]
  in which the square on the right is also bicartesian, we obtain canonical
  identifications
  \[
    \Sigma(\fib[g])\simeq\cofib[g]\qquad\text{and}\qquad\fib[g]\simeq\Omega(\cofib[g]).
  \]
  These identifications are shadows of the fundamental adjoint equivalences of
  stable $\infty$-categories
  \[
    \begin{tikzcd}
      \operatorname{Cofib}\colon\Fun{s\to t}{\C}\rar[shift left]{\sim}&\Fun{s\to
        t}{\C}\noloc\operatorname{Fib}\lar[shift left]{\sim}
    \end{tikzcd}
  \]
  whose action on objects is apparent; for example,
  \[
    \operatorname{Cofib}(f\colon x\to y)=(y\to\cofib{f}).
  \]
\end{defprop}

\begin{remark}
  Fibre-cofibre sequences in stable $\infty$-categories play the role of
  triangles in triangulated categories,
  see~\Cref{thm:triangulated_homotopy_categories}.
\end{remark}

\begin{definition}[{\cite[Definition~C.1.5.1]{Lur18SAG}}]
  An $\infty$-category $\C$ is \emph{semi-additive} if it has the following
  properties:
  \begin{enumerate}
  \item The $\infty$-category $\C$ is pointed.
  \item The $\infty$-category $\C$ admits finite products and finite coproducts.
  \item For every pair of objects $x,y\in\C$, the canonical map
    \[
      \left(\begin{smallmatrix} \id[x]&0\\0&\id[y]
      \end{smallmatrix}\right)\colon x\amalg y\longrightarrow x\times y
  \]
  is invertible.
\end{enumerate}
We say that $\C$ is \emph{additive} if it is semi-additive and the following
condition holds:
\begin{enumerate}
  \setcounter{enumi}{3}
\item For each object $x\in\C$, the map
  \[
    \left(\begin{smallmatrix}\id[x]&\id[x]\\
      0&\id[x]
    \end{smallmatrix}\right)\colon x\amalg x\longrightarrow x\times x
\]
is invertible.
\end{enumerate}
In particular, the homotopy category of a (semi-)additive $\infty$-category is a
(semi-)additive category in the usual sense.
\end{definition}

The following result shows that stable $\infty$-categories indeed `enhance'
triangulated categories.

\begin{theorem}[{\cite[Theorem~1.1.2.14]{Lur17}}]
  \label{thm:triangulated_homotopy_categories}
  Let $\C$ be a stable $\infty$-category. The following statements hold:
  \begin{enumerate}
  \item The $\infty$-category $\C$ is additive, and hence so is its homotopy
    category $\Ho(\C)$.
  \item The pair $(\Ho(\C),\Sigma)$ admits the structure of a triangulated
    category, whose class of triangles is given by the closure under
    isomorphisms of the image in $\Ho(\C)$ of all diagrams in $\C$ of the form
    \[
      \begin{tikzcd}
        x\rar\dar\ar[phantom]{dr}[description]{\square}&y\rar\dar\ar[phantom]{dr}[description]{\square}&0\dar\\
        0\rar&z\rar&\Sigma(x)
      \end{tikzcd}
    \]
    where both squares are bicartesian.
  \end{enumerate}
\end{theorem}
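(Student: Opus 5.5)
The plan is to first show that $\Sigma$ and $\Omega$ are mutually inverse equivalences of $\C$, and then to get additivity and the triangulated structure from the fibre-cofibre sequences.

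\textbf{Step 1: $\Sigma$ is an equivalence.} Consider the full subcategory of $\Fun{\Delta^1\times\Delta^1}{\C}$ spanned by squares with $0$ in the off-diagonal corners. By stability, such a square is cartesian if and only if it is cocartesian. The two forgetful functors, remembering the top-left corner and remembering the bottom-right corner, are both trivial fibrations onto $\C$, because they are obtained by Kan extension (\cite[Section~4.3.2]{Lur09}). Since bicartesian squares are simultaneously left and right Kan extended, $\Sigma$ and $\Omega$ become mutually inverse, and the unit and counit of the adjunction $\Sigma\dashv\Omega$ are invertible.

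\textbf{Step 2: additivity.} Finite coproducts exist, since a coproduct $x\amalg y$ is the pushout of $x\leftarrow 0\to y$; dually, finite products exist. The square with corners $x$, $x\oplus y$, $0$, $y$ built from the coproduct is cocartesian, hence cartesian. This identifies $x\amalg y$ with the fibre of $x\times y\to y$, which forces the comparison map of condition~(3) to be invertible; I would verify this by mapping into both sides. For the group structure, it suffices to show that each $\Hom[\C]{w}{x}$ is a group. Since $x\simeq\Omega\Sigma x$, we have $\Hom[\C]{w}{x}\cong\pi_1\Map[\C]{w}{\Sigma x}$, which is a group, and it is abelian because $x\simeq\Omega^2\Sigma^2 x$. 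One then checks that this group law agrees with the monoid structure coming from biproducts (an Eckmann--Hilton argument), which gives condition~(4).

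\textbf{Step 3: the triangulated axioms.} Declare the distinguished triangles as in the statement. TR1 holds because every morphism has a cofibre, and the identity has cofibre $0$. For TR2 (rotation), extend the diagram of the statement one step further to the right, pasting bicartesian squares using the pasting lemma. The third map $z\to\Sigma x$ then continues to $\Sigma x\to\Sigma y$, and one must check that this is $-\Sigma f$. I would deduce the sign by comparing two identifications of the outer rectangle with $\Sigma x$, which differ by the flip automorphism of the square; that flip induces $-1$ on $\Sigma$. TR3 follows because a commutative square in $\Ho(\C)$ lifts to a map in $\Fun{\Delta^1}{\C}$ (choose a homotopy), after which functoriality of $\cofib$ applies. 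For the octahedral axiom, given $x\to y\to w$, form the $3\times 3$ grid of bicartesian squares obtained by iterating cofibres of the composable pair, as a diagram in $\Fun{\Delta^2}{\C}$ built by Kan extension. Reading off the resulting pushouts, with the pasting lemma, yields the octahedron.

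The main obstacle I expect is the sign in TR2, together with making the octahedral diagram genuinely coherent. The sign is handled by the transposition symmetry of $\Delta^1\times\Delta^1$. Coherence comes for free because every object of the grid is defined via Kan extensions along a fully faithful inclusion of posets, so the construction lives in an $\infty$-category of diagrams that is equivalent to $\Fun{\Delta^2}{\C}$.
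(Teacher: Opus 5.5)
Your outline is essentially Lurie's proof of \cite[Theorem~1.1.2.14]{Lur17}, which the paper cites without giving an argument of its own: $\Sigma$ and $\Omega$ as inverse equivalences via bicartesian squares, the abelian-group enrichment of $\Ho(\C)$ from double loop spaces, the TR2 sign from transposing the suspension square, TR3 by lifting homotopy-commutative squares to the arrow $\infty$-category, and TR4 from a grid of bicartesian squares built by Kan extension. Two small corrections: in Step~1 the subcategory must consist of the \emph{bicartesian} squares with zero off-diagonal corners (otherwise the evaluation functors are not trivial fibrations), and in Step~2 the cartesian square identifies $x$, not $x\amalg y$, with the fibre of $x\amalg y\to y$ --- condition~(3) is more cleanly obtained after your group-structure argument, since finite products in the preadditive category $\Ho(\C)$ are automatically biproducts and invertibility of the comparison map can be checked in $\Ho(\C)$.
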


\begin{remark}
  All triangulated categories that arise `naturally' in mathematics can be
  realised as the triangulated homotopy category of a stable $\infty$-category; this includes
  algebraic triangulated categories in the sense of Keller~\cite{Kel06} and
  topological triangulated categories in the sense of Schwede~\cite{Sch10a}. The
  case of algebraic triangulated categories is dealt with
  in~\Cref{prop:Cisinski-Frobenius-stable-cat}.
\end{remark}

\begin{remark}
  There are strong results on uniquness of $\infty$-categorical enhancements of
  triangulated categories~\cite{Ant18,CNS22} with important precursors
  in~\cite{Sch01,SS02,Sch07}.
\end{remark}

We record the apparent definition of `stable subcategory' of a stable
$\infty$-category for later use.

\begin{defprop}[{\cite[Lemma~1.1.3.3]{Lur17}}]
  Let $\C$ be a stable $\infty$-category and $\A\subseteq\C$ a full subcategory.
  The following statements are equivalent:
  \begin{enumerate}
  \item The subcategory $\A\subseteq\C$ contains a zero object and is closed
    under the formation fibres and cofibres.
  \item The subcategory $\A\subseteq\C$ is closed under the formation of
    cofibres and under the action of the suspension functor of $\C$ and its
    inverse.
  \end{enumerate}
  If the above equivalent conditions are satisfied, we say that $\A$ is a
  \emph{stable subcategory} of $\C$.
\end{defprop}

Stable $\infty$-categories admit several useful equivalent characterisations.

\begin{theorem}[{\cite[Proposition~1.1.3.4, Corollary~1.4.2.27]{Lur17}}]
  \label{thm:equivalent-def-stable}
  Let $\C$ be a pointed $\infty$-category. The following statements are
  equivalent:
  \begin{enumerate}
  \item The $\infty$-category $\C$ is stable.
  \item The $\infty$-category $\C$ admits finite limits, finite colimits, and
    the classes of cartesian and cocartesian squares in $\C$ coincide.
  \item The $\infty$-category $\C$ admits finite limits, finite colimits, and
    the adjoint functors
    \[
      \begin{tikzcd}
        \Sigma\colon\C\rar[shift left]&\C\noloc\Omega\lar[shift left]
      \end{tikzcd}
    \]
    are mutually inverse equivalences of $\infty$-categories.
  \item The $\infty$-category $\C$ admits finite limits and the functor
    \[
      \Omega\colon\C\longrightarrow\C,\qquad x\longmapsto 0\times_x0
    \]
    is an equivalence.
  \item The $\infty$-category $\C$ admits finite colimits and the functor
    \[
      \Sigma\colon\C\longrightarrow\C,\qquad x\longmapsto 0\amalg_x0
    \]
    is an equivalence.
  \end{enumerate}
\end{theorem}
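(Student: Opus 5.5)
The plan is to establish the cycle of implications $(1)\Rightarrow(2)\Rightarrow(3)\Rightarrow(4)$, together with $(3)\Rightarrow(5)$, and then to close the cycle with $(4)\Rightarrow(1)$ and, dually, $(5)\Rightarrow(1)$. Stability is self-dual, and condition $(5)$ for $\C$ is condition $(4)$ for $\C^\op$. So the last implication follows from the previous one applied to $\C^\op$.

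\emph{$(1)\Rightarrow(2)$.} By \Cref{thm:triangulated_homotopy_categories}, $\C$ is additive, so finite products and coproducts exist and agree. A pullback $x\times_z y$ is then constructed as $\fib(x\oplus y\to z)$, using the difference of the two maps. Together with the terminal object this gives all finite limits, and finite colimits follow dually.

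Next, consider a square with rows $w\to x$ and $y\to z$. I would use the pasting lemma \cite[Lemma~4.4.2.1]{Lur09} to show two things.
\begin{itemize}
\item The square is cartesian if and only if the induced map $\fib(w\to x)\to\fib(y\to z)$ is invertible.
\item The square is cocartesian if and only if the induced map on cofibres is invertible.
\end{itemize}
For the first, paste the square with the fibre square of $y\to z$. For the second, paste it with the cofibre square of $w\to x$. Since $\cofib\simeq\Sigma\circ\fib$ on arrows, and $\Sigma$ is an equivalence in a stable $\infty$-category, the two conditions coincide.

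\emph{$(2)\Rightarrow(3)$.} The square with corners $x,0,0,\Sigma x$ is cocartesian, hence cartesian by $(2)$. This says exactly that the unit $x\to\Omega\Sigma x$ is invertible. Dually, the counit $\Sigma\Omega x\to x$ is invertible, so $\Sigma$ and $\Omega$ are mutually inverse equivalences. The implications $(3)\Rightarrow(4)$ and $(3)\Rightarrow(5)$ are immediate.

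\emph{$(4)\Rightarrow(1)$.} This is the main obstacle, corresponding to Lurie's Corollary~1.4.2.27. My plan is to pass through the $\infty$-category of spectrum objects
\[
\operatorname{Sp}(\C)\simeq\lim\bigl(\cdots\xrightarrow{\Omega}\C\xrightarrow{\Omega}\C\bigr).
\]
I would first prove the general fact that $\operatorname{Sp}(\C)$ is stable whenever $\C$ is pointed with finite limits. In $\operatorname{Sp}(\C)$, loops are computed levelwise and $\Omega$ acts on the tower by a shift, so $\Omega$ is invertible. The hard part is producing finite colimits and showing that they coincide with limits. The approach is to construct cofibres levelwise as $\Omega$ of the fibre at the next level. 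One then checks that a square is cartesian iff the map on fibres is invertible, exactly as in the first step, and reduces the coincidence of cartesian and cocartesian squares to the invertibility of $\Omega$.

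Granting this, when $\Omega$ is already an equivalence the tower is equivalent to a constant one. Hence $\Omega^\infty\colon\operatorname{Sp}(\C)\to\C$ is an equivalence, and $\C$ inherits stability. Stability is a property invariant under equivalence, so this concludes the argument.
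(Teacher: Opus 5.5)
\paragraph{Main gap.} The paper gives no proof and defers to \cite[Proposition~1.1.3.4, Corollary~1.4.2.27]{Lur17}. Your outline has the same shape. In particular, Lurie also obtains (4)$\Rightarrow$(1) from the stability of $\operatorname{Sp}(\C)$, together with its identification with the limit of the $\Omega$-tower. The genuine gap is that this one substantive input is not established by your sketch: that $\operatorname{Sp}(\C)$ is stable for every pointed $\C$ with finite limits.

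For $f\colon X\to Y$ in $\operatorname{Sp}(\C)$, the candidate cofibre is $Z_n=\operatorname{fib}(f_{n+1})$. This is the fibre at the next level itself; applying $\Omega$ to it returns $\operatorname{fib}(f_n)$, the fibre of $f$, not its cofibre. The square with corners $X,Y,0,Z$ is cartesian levelwise by the Puppe sequence. What must be shown is that it is cocartesian, i.e.\ that $\operatorname{Map}(Z,T)\to\operatorname{Map}(Y,T)\times_{\operatorname{Map}(X,T)}\{0\}$ is invertible for all $T$. This is a universal property of maps \emph{out of} $Z$.

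Pasting and fibre comparisons cannot prove it. They are available in any pointed $\infty$-category with finite limits, and there cartesian squares with a zero corner need not be cocartesian (already in pointed sets). Nor can you argue ``exactly as in the first step''. There you used that cofibres exist and that $\Sigma$ is invertible, which is precisely what is now at stake.

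The missing idea is to exploit the limit structure of $\operatorname{Sp}(\C)$. For instance, $\operatorname{Map}_{\operatorname{Sp}(\C)}(Z,T)\simeq\lim_n\operatorname{Map}_{\C}(Z_n,T_n)$. The towers $\{\operatorname{Map}_{\C}(Z_n,T_n)\}_n$ and $\{\operatorname{Map}_{\C}(Y_n,T_n)\times_{\operatorname{Map}_{\C}(X_n,T_n)}\{0\}\}_n$ are interleaved:
\begin{itemize}
\item one map is precomposition with $Y_n\simeq\Omega Y_{n+1}\to\operatorname{fib}(f_{n+1})$;
\item the other sends a square from $X_{n+1}\to Y_{n+1}$ to $0\to T_{n+1}$ to the induced map $\operatorname{fib}(f_{n+1})\to\Omega T_{n+1}\simeq T_n$.
\end{itemize}
Both composites are the transition maps, so the limits agree. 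Alternatively, cite Lurie's theorem that spectrum objects form a stable $\infty$-category.

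\paragraph{Smaller gap.} This one sits in your (1)$\Rightarrow$(2) and recurs in the $\operatorname{Sp}(\C)$ step. Pasting a square $\sigma$ with a fibre square only proves one direction: if $\sigma$ is cartesian, then the induced map on fibres is invertible.

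The converse fails in pointed sets, so it cannot come from pasting. Take top row $\{*\}\to\{*,b\}$ and bottom row $\mathrm{id}\colon\{*\}\to\{*\}$. Both rows have fibre $\{*\}$, so the map on fibres is bijective, but the square is not cartesian.

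In the stable case you can supply the converse as follows. The comparison map $u\colon w\to x\times_zy$ satisfies $\operatorname{fib}(u)\simeq\operatorname{fib}\bigl(\operatorname{fib}(w\to x)\to\operatorname{fib}(y\to z)\bigr)$. A map with vanishing fibre is invertible. The dual argument handles cofibres.

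Alternatively, use Lurie's argument, which avoids fibre comparisons. If $\sigma$ is cocartesian, paste it with the cofibre square of its right-hand vertical map $x\to z$. The added square and the outer rectangle are then cocartesian with a zero corner, hence cartesian. The pasting lemma then makes $\sigma$ cartesian. The converse is dual.
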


We now turn our attention to functors between stable $\infty$-categories. The
relevant notion is the following.

\begin{defprop}[{\cite[Proposition~1.1.4.1, Corollary~1.4.2.14]{Lur17}}]
  \label{defprop:exact_functors}
  Let $F\colon\C\to\D$ be a functor between stable $\infty$-categories. The
  following conditions are equivalent:
  \begin{enumerate}
  \item The functor $F$ preserves zero objects and fiber-cofiber sequences.
  \item The functor $F$ preserves zero objects and the canonical natural
    transformation
    \[
      \Sigma_{\D}\circ F\Longrightarrow F\circ\Sigma_{\C}
    \]
    is invertible.\footnote{The beauty of this condition cannot be overstated.}
  \item The functor $F$ is \emph{left exact}, that is $F$ preserves finite
    limits.
  \item The functor $F$ is \emph{right exact}, that is $F$ preserves finite
    colimits.
  \end{enumerate}
  If the above equivalent conditions hold, we say that $F$ is an \emph{exact}
  functor.
\end{defprop}

\begin{remark}
  It follows from \Cref{defprop:exact_functors} that equivalences between stable
  $\infty$-categories are exact.
\end{remark}

\begin{definition}
  We denote by $\cat[\infty]<ex>$ the $\infty$-category of essentially small
  stable $\infty$-categories and exact functors between them. This is a non-full
  (!) subcategory of the $\infty$-category $\cat[\infty]$ of essentially small
  $\infty$-categories.
\end{definition}

\begin{remark}
  In view of \Cref{thm:triangulated_homotopy_categories} and
  \Cref{defprop:exact_functors}, there is a well-defined functor
  $\C\mapsto\Ho(\C)$ from $\cat[\infty]<ex>$ to the ordinary category of
  triangulated categories and triangle functors between them.
\end{remark}

\Cref{prop:stable_functor_categories} admits the following variant, which is
also generally false for triangulated categories.

\begin{proposition}[{\cite[Proposition~1.4.2.16]{Lur17}}]
  Let $\C$ and $\D$ be essentially small stable $\infty$-categories. Then, the
  $\infty$-category $\Fun<ex>{\C}{\D}$ of exact functors $\C\to D$ is a stable
  $\infty$-category.
\end{proposition}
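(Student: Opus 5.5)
We prove that $\Fun<ex>{\C}{\D}$ is a stable subcategory of the functor $\infty$-category $\Fun{\C}{\D}$. The latter is stable by \Cref{prop:stable_functor_categories}, at least after replacing $\C$ by an equivalent small stable $\infty$-category; this replacement is harmless because essential smallness suffices. By the characterisation of stable subcategories recalled above, it is enough to check two things: that $\Fun<ex>{\C}{\D}$ contains a zero object of $\Fun{\C}{\D}$, and that it is closed under fibres and cofibres computed in $\Fun{\C}{\D}$. Since the inclusion is full, the full subcategory is then itself stable. Its zero object, fibre-cofibre sequences and bicartesian squares are those of the ambient category, so conditions (1)–(3) in the definition of stability are inherited directly.

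\textbf{Zero object.} The constant functor with value $0\in\D$ is a zero object of $\Fun{\C}{\D}$, because limits and colimits in functor $\infty$-categories are computed pointwise~\cite[Corollary~5.1.2.3]{Lur09}. It is exact: it preserves finite limits, since the limit of a constant diagram at a zero object over a finite (even empty) indexing category is again $0$. So we may apply \Cref{defprop:exact_functors}(3).

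\textbf{Closure under fibres and cofibres.} Let $\alpha\colon F\Rightarrow G$ be a natural transformation between exact functors, and let $H=\fib[\alpha]$, computed pointwise as $H(x)=F(x)\times_{G(x)}0$. To show $H$ is exact we use criterion (3) of \Cref{defprop:exact_functors}: $H$ preserves finite limits. Take a finite diagram $p\colon K\to\C$ with limit $\lim p$. Then
\[
H(\lim p)\simeq F(\lim p)\times_{G(\lim p)}0\simeq (\lim Fp)\times_{\lim Gp}\lim 0\simeq \lim (Fp\times_{Gp}0)=\lim Hp,
\]
using exactness of $F$ and $G$ and the fact that limits commute with limits. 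This identification must be the canonical comparison map, which holds because it is assembled from the canonical maps. For cofibres we argue dually with criterion (4): $\cofib[\alpha]$ is a pointwise pushout, and colimits commute with colimits, so $\cofib[\alpha]$ preserves finite colimits. Alternatively, $\cofib[\alpha]\simeq\Sigma\fib[\alpha]$ pointwise, and postcomposition with $\Sigma_\D$ preserves exactness because $\Sigma_\D$ is an equivalence.

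\textbf{Main obstacle.} The step needing the most care is the coherence in the limit-commutation argument: we must know that the comparison map $H(\lim p)\to\lim Hp$ is the one induced by the comparison maps for $F$ and $G$, so that invertibility follows. The cleanest way to secure this is to phrase "preserves the limit of $p$" as "sends a limit cone $K^{\triangleleft}\to\C$ to a limit cone". Then $H\circ\bar p$ is the pointwise fibre of $F\bar p\to G\bar p$ in $\Fun{K^{\triangleleft}}{\D}$. Limit cones in $\D$ form a full subcategory of $\Fun{K^{\triangleleft}}{\D}$, namely the functors right Kan extended from $K$, and this subcategory is closed under finite limits because right Kan extension is a right adjoint. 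This settles the point without explicit manipulation of comparison maps.
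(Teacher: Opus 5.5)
Your argument is correct and is essentially the standard proof behind the cited result \cite[Proposition~1.4.2.16]{Lur17}; the survey itself gives only the citation. Like that proof, you exhibit exact functors as a stable subcategory of the stable $\infty$-category $\Fun{\C}{\D}$: limit (resp.\ colimit) cones in $\D$ are closed under finite limits (resp.\ colimits), or for cofibres one uses $\operatorname{cofib}(\alpha)\simeq\Sigma_{\D}\circ\operatorname{fib}(\alpha)$ with $\Sigma_{\D}$ an equivalence, and your right Kan extension argument is a clean way to make the ``limits commute with limits'' step rigorous.
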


The following criterion for detecting equivalences between stable
$\infty$-categories is useful.

\begin{proposition}[{\cite[Theorem~7.6.10]{Cis19}}]
  \label{prop:equivalences_of_stable_categories}
  Let $F\colon\C\to\D$ be an exact functor between stable $\infty$-categories.
  Then, $F$ is an equivalence if and only if the triangle functor
  $\Ho(F)\colon\Ho(\C)\to\Ho(\D)$ is an equivalence of ordinary categories.
\end{proposition}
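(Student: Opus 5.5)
One direction is immediate: if $F$ is an equivalence of $\infty$-categories, then $\Ho(F)$ is an equivalence of ordinary categories, because $\Ho$ is a functor of $\infty$-categories. The content is the converse. By the definition of equivalence recalled in the primer, it suffices to show that $F$ is fully faithful in the $\infty$-categorical sense, since essential surjectivity of $\Ho(F)$ is part of the hypothesis. So we must show that for all $x,y\in\C$ the map of $\infty$-groupoids
\[
  F_{x,y}\colon\Map[\C]{x}{y}\longrightarrow\Map[\D]{F(x)}{F(y)}
\]
is invertible. We will check this with \Cref{thm:gpd-equivalences}, that is, on $\pi_0$ and on all homotopy groups at all basepoints.

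On $\pi_0$ the map is $\Hom[\Ho(\C)]{x}{y}\to\Hom[\Ho(\D)]{F(x)}{F(y)}$. This is bijective because $\Ho(F)$ is fully faithful.

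For the higher homotopy groups we use stability. Since $\C$ has a zero object, $\Map[\C]{x}{y}$ is canonically pointed by the zero map. Since $\Omega y=0\times_y 0$ is a limit, \Cref{rmk:limits-Ho} gives
\[
  \Map[\C]{x}{\Omega y}\simeq *\times_{\Map[\C]{x}{y}}*\simeq\Omega(\Map[\C]{x}{y},0).
\]
Iterating,
\[
  \pi_n(\Map[\C]{x}{y},0)\cong\Hom[\Ho(\C)]{x}{\Omega^n y}=\Hom[\Ho(\C)]{x}{\Sigma^{-n}y}.
\]
The exact functor $F$ preserves the pullback defining $\Omega$ (\Cref{defprop:exact_functors}), so these identifications are compatible with $F_{x,y}$ and with the corresponding identifications in $\D$. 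Under them, $\pi_n(F_{x,y})$ becomes the map on $\Hom$-sets induced by $\Ho(F)$ applied to $x$ and $\Omega^n y$, composed with the isomorphism $F(\Omega^n y)\simeq\Omega^n F(y)$. This map is bijective by full faithfulness of $\Ho(F)$. It is a group isomorphism for $n\geq1$, because it is induced by a map of pointed spaces.

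It remains to handle arbitrary basepoints $f\in\Map[\C]{x}{y}$. The mapping spaces of a stable $\infty$-category are grouplike: by additivity (\Cref{thm:triangulated_homotopy_categories}), $\Map[\C]{x}{y}$ carries a group structure up to homotopy, namely $\Map[\C]{x}{\Omega\Sigma y}$ is a loop space. Translation by $f$ is therefore a self-equivalence of $\Map[\C]{x}{y}$ carrying $0$ to $f$. Since $F$ is exact, it preserves this structure: $F_{x,y}$ is compatible with the identification $\Map(x,y)\simeq\Omega\Map(x,\Sigma y)$. Hence $F_{x,y}$ is a loop map, and a loop map that is an isomorphism on homotopy groups at the base point is one at every base point. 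We then conclude by \Cref{thm:gpd-equivalences}.

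The main obstacle is this last step: making rigorous that $F_{x,y}$ commutes with the translation equivalences, so that we may reduce to the basepoint $0$. The cleanest way is to write $F_{x,y}\simeq\Omega(F_{x,\Sigma y})$, which realises it as the loop of a pointed map. On a loop space all path components are equivalent via loop composition, and $\Omega$ of a pointed map respects loop composition.
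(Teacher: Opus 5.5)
Your proof is correct and is the argument the paper has in mind: the paper formally only cites \cite[Theorem~7.6.10]{Cis19}, which covers arbitrary left-exact functors between finitely complete $\infty$-categories, but the remark after the statement invites exactly this stable shortcut via the isomorphisms $\pi_n(\Map[\C]{x}{y},0)\cong\Hom[\C]{\Sigma^n(x)}{y}$ (your $\Hom[\Ho(\C)]{x}{\Omega^n y}$, by adjunction). Its footnote treats arbitrary basepoints just as you do, using that $\Map[\C]{x}{y}\simeq\Omega(\Map[\C]{x}{\Sigma(y)},0)$ is a loop space, and your observation that $F_{x,y}\simeq\Omega(F_{x,\Sigma y})$ as pointed maps is the right way to make that step rigorous.
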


\begin{remark}
  It is worth mentioning that~\cite[Theorem~7.6.10]{Cis19} treats the more
  general case of $\infty$-categories with finite limits and left-exact
  functors. The proof in the case of stable $\infty$-categories is much easier, for
  $\operatorname{Hom}$-functors in triangulated categories are (co)homological
  and there are canonical isomorphisms of abelian groups\footnote{Here, we have
    taken the homotopy groups of the mapping space $\Map[\C]{x}{y}$ based at the
  null map $x\to y$. Owing to the fact that
  $\Map[\C]{x}{y}\simeq\Omega(\Map[\C]{x}{\Sigma(y)},0)$ is a loop space and
  hence an $H$-group, its
  connected components are pairwise equivalent and
  its homotopy groups are therefore independent of the choice of basepoint (see also~\Cref{rmk:mapping-spectra}).}
  \[
    \pi_n(\Map[\C]{x}{y},0)\cong\Hom[\C]{\Sigma^n(x)}{y},\qquad x,y\in\C,\
    n\geq0.
  \]
  We invite the reader to give a proof of
  \Cref{prop:equivalences_of_stable_categories} with this in mind.
\end{remark}

Stable $\infty$-categories are amenable to robust gluing techniques, as the
following result makes manifest.

\begin{theorem}[{\cite[Theorem~1.1.4.4, Proposition~1.1.4.6]{Lur17}}]
  The $\infty$-category $\cat[\infty]<ex>$ admits small limits and small
  filtered colimits, and these are preserved by the inclusion
  ${\cat[\infty]<ex>\subset\cat[\infty]}$.
\end{theorem}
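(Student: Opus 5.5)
We will show that the $\infty$-category $\cat[\infty]<ex>$ has small limits and small filtered colimits, and that the inclusion $\cat[\infty]<ex>\subset\cat[\infty]$ preserves them. In each case the plan is the same: form the (co)limit $\C$ in $\cat[\infty]$, check that $\C$ is stable and that the structure maps are exact, and then check the universal property inside $\cat[\infty]<ex>$. We take for granted that $\cat[\infty]$ has all small limits and colimits.

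\emph{Limits.} Let $p\colon A\to\cat[\infty]<ex>$, $a\mapsto\C_a$, and let $\C=\lim_a\C_a$ in $\cat[\infty]$.
\begin{enumerate}
\item First we describe finite colimits and limits in $\C$. A diagram $K\to\C$ with $K$ finite amounts to a compatible family of diagrams $K\to\C_a$. Each such diagram has a colimit in $\C_a$, and the transition functors $\C_a\to\C_b$ are exact, so they preserve these colimits.
\item We then show that the family of pointwise colimits is a colimit in $\C$. The mapping spaces of a limit of $\infty$-categories are limits of mapping spaces, $\Map[\C]{x}{y}\simeq\lim_a\Map[\C_a]{x_a}{y_a}$. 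So the family is a colimit in $\C$ and is preserved by every projection $\C\to\C_a$. Dually, finite limits in $\C$ exist and are computed pointwise.
\item For the more formal version of this step we would use Lurie's results on limits of $\infty$-categories admitting $K$-indexed colimits with $K$-colimit-preserving transition functors, namely \cite[Proposition~5.4.7.11]{Lur09}.
\item Stability of $\C$ then follows from \Cref{thm:equivalent-def-stable}(2). A zero object is the compatible family of zero objects. A square in $\C$ is cartesian iff it is so in each $\C_a$, and likewise for cocartesian, and in each stable $\C_a$ the two classes coincide.
\item The projections $\C\to\C_a$ preserve finite colimits, so they are exact by \Cref{defprop:exact_functors}.
\item For the universal property, take a stable $\D$ and a compatible family of exact functors $\D\to\C_a$. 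It induces a functor $\D\to\C$ in $\cat[\infty]$. This functor is exact because finite colimits in $\C$ are detected pointwise.
\end{enumerate}
So $\C$ is the limit in $\cat[\infty]<ex>$. This also shows that $\Fun<ex>{\D}{\C}$, the full subcategory of exact functors, is the limit of the $\Fun<ex>{\D}{\C_a}$.

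\emph{Filtered colimits.} Let $p\colon A\to\cat[\infty]<ex>$ with $A$ filtered, and let $\C=\operatorname{colim}_a\C_a$ in $\cat[\infty]$.
\begin{enumerate}
\item The key input is a filtered-colimit formula: every object and every finite diagram of $\C$ comes from some $\C_a$, and
\[
  \Map[\C]{x}{y}\simeq\operatorname{colim}_{b\geq a}\Map[\C_b]{x_b}{y_b}.
\]
\item Given a finite diagram $K\to\C$, we lift it to some $\C_a$, take its colimit there, and push it to $\C$. To see this is a colimit in $\C$, we use that filtered colimits of spaces commute with finite limits. The required equivalence of mapping spaces then holds after passing to the filtered colimit, since it holds at every stage $b\geq a$ by exactness of the transition functors.
\item Zero objects are handled in the same way.
\item A bicartesian-compatibility check on squares again lifts to a finite stage, so $\C$ is stable and the structure maps $\C_a\to\C$ are exact.
\item For the universal property, let $\D$ be stable with a compatible family of exact functors $\C_a\to\D$, giving $\C\to\D$. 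Exactness of $\C\to\D$ is checked on finite diagrams, and these lift to some $\C_a$.
\end{enumerate}

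\emph{Main obstacle.} The hard part is making the filtered-colimit formula for mapping spaces, and the lifting of finite diagrams, rigorous in $\cat[\infty]$. We expect these to be facts about compact objects: finite simplicial sets are compact in $\cat[\infty]$. We would cite \cite[Section~5.5.7]{Lur09} rather than reprove them.
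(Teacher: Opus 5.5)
Your proposal is correct and follows essentially the route of Lurie's proofs, which the paper cites without reproducing. You compute the limit or filtered colimit in $\mathrm{Cat}_\infty$ and use that $\infty$-categories with finite (co)limits and (co)limit-preserving functors are closed under these operations (HTT~5.4.7.11 for limits and its filtered-colimit counterpart in HTT~\S5.5.7, which your mapping-space arguments essentially sketch); you then check pointedness, cartesian${}={}$cocartesian, exactness of the structure functors and the universal property pointwise or at a finite stage. The one step to state more carefully is the filtered case: lift the span, not a given cocartesian square, to some $\mathcal{C}_a$ and take its pushout there, so that every (co)cartesian square in the colimit is equivalent to the image of a bicartesian square from a finite stage.
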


\begin{remark}
  In fact, the $\infty$-category $\cat[\infty]<ex>$ admits small
  colimits~\cite[Corollary~4.25]{BGT13}, but these are in general not preserved
  by the inclusion ${\cat[\infty]<ex>\subset\cat[\infty]}$ unless these are
  filtered. For a simple counterexample, notice that the coproduct of two stable
  $\infty$-categories $\C$ and $\D$, as $\infty$-categories, is their disjoint
  union, which no longer has a zero object.\footnote{The reader should compare
    the situation with the forgetful functor from the category of abelian groups
    to the category of sets, which also preserves small limits and small
    filtered colimits, but not arbitrary small colimits (such as small
    coproducts).} Important examples of non-filtered colimits of stable
  $\infty$-categories are Verdier quotients,
  see~\Cref{defprop:Verdier_quotient}.
\end{remark}

The construction of Verdier quotients lifts to the $\infty$-categorical world as
follows.

\begin{defprop}[{\cite[Theorem~I.3.3.]{NS18}}]
  \label{defprop:Verdier_quotient}
  Let $F\colon \C\hookrightarrow\D$ be a fully faithful exact functor between
  stable $\infty$-categories.\footnote{For example, the inclusion of a stable
    subcategory.} The \emph{Verdier quotient} of $\D$ by the essential image of
  $\C$, denoted $\D/\C$, is the $\infty$-categorical localisation of $\D$ at the
  class of morphism whose cofibre lies in the essential image of $\C$. The
  Verdier quotient $\D/\C$ is a stable $\infty$-category and there exists a
  cocartesian diagram in the $\infty$-category $\cat[\infty]<ex>$ of the
  form\footnote{It is worth mentioning that, in this particular case, the square
  itself is essentially unique. Indeed, it correspond to the datum of a trivialisation
  $p\circ F\simeq 0$ and, since the $0\in\Fun<ex>{\C}{\D/\C}$ is a zero object,
  such a trivialisation is therefore essentially unique, compare
  with~\cite[Definition~5.8]{BGT13} and~\cite[Remark~3.2]{Dyc21}.}
  \[
    \begin{tikzcd}
      \C\rar[hookrightarrow]{F}\dar\ar[phantom]{dr}[description]{\text{PO}}&\D\dar{p}\\
      0\rar&\D/\C
    \end{tikzcd}
  \]
  In particular, the canonical functor $p\colon\D\to\D/\C$ is exact and its
  restriction along $F$ vanishes. Furthermore, for each stable $\infty$-category
  $\E$, restriction along $p$ induces an equivalence of stable
  $\infty$-categories
  \[
    p^*\colon\Fun<ex>{\D/\C}{\E}\stackrel{\sim}{\longrightarrow}\Fun[\C]<ex>{\D}{\E},
  \]
  where $\Fun[\C]<ex>{\D}{\E}$ denotes the $\infty$-category of exact functors
  $\D\to\E$ whose restriction along $F\colon \C\hookrightarrow\D$ vanishes.
\end{defprop}

\begin{remark}
  Let $F\colon \C\to\D$ be a fully faithful exact functor between stable
  $\infty$-categories. It follows immediately from the universal property of
  localisations and of the homotopy category construction that there is a
  canonical equivalence of categories
  \[
    \Ho(\D)/\Ho(\C)\stackrel{\sim}{\longrightarrow}\Ho(\D/\C),
  \]
  where the left-hand side denotes the usual Verdier quotient of the
  triangulated category $\Ho(\D)$ by the essential image of $\Ho(\C)$, compare
  with~\cite[Remark~7.1.6]{Cis19}.
\end{remark}

\subsection{Idempotent-complete stable $\infty$-categories}

\begin{definition}
  Let $\Idem$ be the category with a single object $*$ and a single non-identity
  morphism $e\colon *\to *$, so that $e^2=e$. An \emph{idempotent endomorphism}
  in an $\infty$-category $\C$ is a functor
  \[
    \Idem\longrightarrow\C,\quad *\longmapsto x,
  \]
  see~\cite[\href{https://kerodon.net/tag/03ZT}{Definition
    03ZT}]{kerodon}.\footnote{Given an object $x\in\C$, specifying an idempotent
    endomorphism $e\colon x\to x$ requires one to provide an infinite amount of
    data, namely, the data of compatible identifications
    \[
      \underbrace{e\circ\cdots\circ e}_{n\text{ times}}\simeq e,\qquad n\geq1,
    \]
    compare with~\cite[Section~4]{GHW21}.} Similary, let $\Ret$ be the category
  with two objects $x$ and $y$ and whose only non-idendity morphisms are
  $i\colon y\to x$, $p\colon x\to y$ and $e\coloneq i\circ p\colon x\to x$.
  Notice that $\Idem$ identifies with the full subcategory of $\Ret$ spanned by
  the single object $x$. An idempotent endomorphism $\Idem\to\C$ is \emph{split}
  if it admits an extension
  \[
    \begin{tikzcd}
      &\Ret\dar[dotted]\\
      \Idem\rar\urar[hookrightarrow]&\C,
    \end{tikzcd}
  \]
  see~\cite[\href{https://kerodon.net/tag/03ZX}{Definition 03ZX}]{kerodon}. An
  $\infty$-category is \emph{idempotent complete} if every idempotent morphism
  in $\C$ is
  split~\cite[\href{https://kerodon.net/tag/0406}{Definition~0406}]{kerodon}.
\end{definition}

The following result is an $\infty$-categorical variant of
\cite[Theorem~1.5]{BS01}.

\begin{proposition}[{\cite[Propositions~5.1.4.2 and~5.1.4.9]{Lur09} and~\cite[Corollary~1.1.3.7]{Lur17}}]
  Every $\infty$-category admits $\C$ an idempotent completion, that is a fully
  faithful functor $i\colon\C\hookrightarrow\C^\flat$ into an idempotent
  complete $\infty$-category $\C^\flat$ such that every object of $\C^\flat$ is
  a retract of an object in the essential image of $\C$. Moreover, for each
  idempotent-complete $\infty$-category $\D$, restriction along $i$ induces an
  equivalence of $\infty$-categories
  \[
    i^*\colon\Fun{\C^\flat}{\D}\stackrel{\sim}{\longrightarrow}\Fun{\C}{\D}.
  \]
  If $\C$ and $\D$ are stable, then $\C^\flat$ is stable, the functor
  $i\colon\C\to\C^\flat$ is exact, and there is an induced equivalence of stable
  $\infty$-categories
  \[
    i^*\colon\Fun<ex>{\C^\flat}{\D}\stackrel{\sim}{\longrightarrow}\Fun<ex>{\C}{\D}.
  \]
\end{proposition}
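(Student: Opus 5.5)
The plan is to construct $\C^\flat$ inside presheaves. Take the Yoneda embedding $\C\hookrightarrow\Fun{\C^\op}{\Grpd_\infty}$ (for $\C$ small) and let $\C^\flat$ be the full subcategory spanned by retracts of representable presheaves. Fully faithfulness of $i$ then follows from the Yoneda embedding being fully faithful. Every object of $\C^\flat$ is a retract of an object in the image by construction.

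To see that $\C^\flat$ is idempotent complete, I would argue in two steps.
\begin{enumerate}
\item $\Fun{\C^\op}{\Grpd_\infty}$ has all small colimits, computed pointwise, since $\Grpd_\infty$ has them. An idempotent $\Idem\to\C$ therefore splits there: its colimit over the diagram $\Idem$ (equivalently the sequential colimit $x\xrightarrow{e}x\xrightarrow{e}\cdots$, with the coherences supplied by the functor out of $\Idem$) provides the extension to $\Ret$. This is Lurie's argument in~\cite[Section~4.4.5]{Lur09}.
\item A retract of a retract of a representable is again a retract of a representable, so $\C^\flat$ is closed under retracts and hence idempotent complete.
\end{enumerate}

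For the universal property, restriction $i^*$ should have an inverse given by left Kan extension along $i$. Given $F\colon\C\to\D$ with $\D$ idempotent complete, a retract $y$ of $i(x)$ corresponds to a split idempotent $e$ on $i(x)$. Because $i$ is fully faithful, $e$ comes from an idempotent on $x$, hence from a functor $\Idem\to\C$. Composing with $F$ gives an idempotent in $\D$, which splits, and the splitting is the value of the extension at $y$. To make this functorial and coherent I would show that a functor $\C^\flat\to\D$ is the left Kan extension of its restriction exactly when it preserves retracts, which every functor does. Retracts are absolute (co)limits: a split idempotent's image is the colimit of the $\Idem$-diagram, and every functor preserves it. Then $i^*$ is an equivalence by the general fact that left Kan extension along a fully faithful functor is fully faithful with essential image the functors that are Kan extended. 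I expect this coherence step, that split idempotents are absolute colimits in the $\infty$-categorical sense, to be the main obstacle. I would take it from~\cite[Section~4.4.5]{Lur09} rather than reprove it.

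For the stable part, $\C^\flat$ is pointed since $i(0)$ is a zero object. By \Cref{thm:equivalent-def-stable} it then suffices to show:
\begin{enumerate}
\item $\C^\flat$ has finite colimits and $\Sigma$ is an equivalence.
\item $i$ preserves them.
\end{enumerate}
Given $f\colon y\to y'$ with $y,y'$ retracts of $i(x),i(x')$, I would write $f$ as a retract, in $\Fun{\Delta^1}{\C^\flat}$, of a map $i(x\oplus x')\to i(x\oplus x')$ coming from $\C$. The cofibre in $\C$ of that map, mapped into $\C^\flat$, carries an induced idempotent whose image computes $\cofib(f)$. This uses that the cofibre functor preserves retracts and that $i$ preserves the pushouts existing in $\C$. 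The pushouts in $\C$ remain pushouts in presheaves after passing to the stable/exact presheaf category, so I would instead embed into $\Fun<ex>{\C^\op}{\mathrm{Sp}}$ or check the universal property directly using additivity. Hence $\C^\flat$ has cofibres, and since $\Sigma$ is computed the same way it is an equivalence with inverse $\Omega$ on retracts. Finally, exactness of $i$ holds by construction. The exact universal property restricts from the unstable one: an extension of an exact $F$ is exact because cofibres in $\C^\flat$ are retracts of images of cofibres in $\C$, and $F$ preserves these retract data.
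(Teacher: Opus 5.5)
Your outline is correct. For the non-stable half it follows the cited route: \cite[Section~5.1.4]{Lur09} also takes $\C^\flat$ to be the retracts of representables in $\Fun{\C^\op}{\Grpd_\infty}$, and it obtains the universal property by left Kan extension along $i$.

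The step you defer is better stated as a cofinality claim than as ``every functor preserves retracts''. Suppose $y$ is a retract of $i(x)$ via $s\colon y\to i(x)$ and $r\colon i(x)\to y$. Let $e$ be the idempotent on $x$ with $i(e)\simeq sr$, viewed as an endomorphism of $(x,r)$ in $\C_{/y}$. Then the resulting functor $\Idem\to\C_{/y}$ is cofinal, so $\operatorname{colim}_{\C_{/y}}F$ is the splitting of $F(e)$. This gives two things at once: the Kan extension exists whenever $\D$ is idempotent complete, and, because split idempotents are absolute colimits, every functor $\C^\flat\to\D$ is Kan extended from $\C$.

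For the stable half you take a genuinely different route. \cite[Corollary~1.1.3.7]{Lur17} identifies $\C^\flat$ with the compact objects of the stable $\infty$-category $\operatorname{Ind}(\C)\simeq\Fun<ex>{\C^\op}{\Spectra}$. Stability is then inherited, because compact objects are closed under finite colimits, retracts and $\Sigma^{\pm1}$.

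You instead show by hand that the retract-closure of $\C$ is closed under cofibres. You present $f\colon y\to y'$, with $y'$ a retract of $i(x')$ via $s',r'$, as a retract in $\Fun{s\to t}{\C^\flat}$ of $s'fr\colon i(x)\to i(x')$. You then use that (co)limit squares are stable under retracts. Your argument avoids $\operatorname{Ind}$-completion and compactness entirely, and its retract-of-squares step also proves that the extension of an exact functor is exact. Lurie's argument is shorter once stability of $\operatorname{Ind}(\C)$ is known.

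Two points in the stable part should be tightened.

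\emph{You do not need to leave presheaves of spaces.} The functor $i$ sends pushouts in $\C$ to pushouts in $\C^\flat$, though not to pushouts in $\Fun{\C^\op}{\Grpd_\infty}$. The reason is that, for $z$ a retract of $i(w)$, the comparison map of mapping spaces into $z$ is a retract of the corresponding map into $i(w)$, which is invertible. Applying the same retract trick to spans gives all pushouts in $\C^\flat$, and no appeal to additivity is needed.

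\emph{If you do pass to $\Fun<ex>{\C^\op}{\Spectra}$, justify that the two models agree.} You must explain why the retract-closure of $\C$ there is your $\C^\flat$. One option: $\Omega^\infty$ identifies $\Fun<ex>{\C^\op}{\Spectra}$ with the left exact presheaves, which form a retract-closed full subcategory of $\Fun{\C^\op}{\Grpd_\infty}$ containing the representables. Another option is uniqueness of idempotent completions. This follows from the universal property you have already proved, since your Kan extension argument never used the specific model of $\C^\flat$.
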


In the case of stable $\infty$-categories, idempotent-completeness can be
checked a the level of the homotopy category.

\begin{proposition}[{\cite[Corollary~1.1.3.7, Lemma 1.2.4.6]{Lur17}}]
  Let $\C$ be a stable $\infty$-category. Then, the idempotent completion of
  $\C$ is a stable $\infty$-category. Moreover, the following statements are
  equivalent:\footnote{Beware that these statements are not equivalent for
    arbitrary $\infty$-categories.}
  \begin{enumerate}
  \item The $\infty$-category $\C$ is idempotent complete.
  \item The homotopy category $\Ho(\C)$ is idempotent complete.
  \end{enumerate}
\end{proposition}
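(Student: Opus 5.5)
Two things need proof: the idempotent completion $\C^\flat$ is stable, and $\C$ is idempotent complete if and only if $\Ho(\C)$ is. The key tool is that in an additive $\infty$-category whose homotopy category is additive, splitting an idempotent is detected on the homotopy category. Finite (co)limits let us realise splittings concretely, for instance as a (co)limit of the diagram $x\xrightarrow{e}x\xrightarrow{e}\cdots$ or via a cofibre construction.

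For the stability of $\C^\flat$, we work inside the $\infty$-category of presheaves or, better, $\operatorname{Ind}(\C)$, which is stable. We realise $\C^\flat$ as the full subcategory of retracts of objects of $\C$.

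Closure under $\Sigma$ and $\Sigma^{-1}$ is immediate, since these are equivalences of the ambient category preserving $\C$. Closure under cofibres is the main step. Given $f\colon a\to b$ with $a$ a retract of $x\in\C$ and $b$ a retract of $y\in\C$, we have $a\oplus a'\simeq x$, where $a'$ is the cofibre (equivalently fibre) of the retraction; indeed, in a stable category a retract splits off as a direct summand. Then $f\oplus 0\colon a\oplus\Omega a'\cdots$ can be arranged to have $\cofib[f]$ as a summand of the cofibre of a map between objects of the form $x\oplus\Sigma^{\pm}x$ and $y$-type objects. The cleaner trick, which I would use, is $a\oplus\Sigma a'$ versus Thomason's argument. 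Either way, $\cofib[f]$ is a retract of $\cofib[f\oplus\id]$ for a map in $\C$ after adding complements. One must check that those complements can be chosen in $\C$. This is the usual observation that $a\oplus\Sigma a\simeq$ the cofibre of the idempotent $e$ on $x$, which lies in $\C$. This gives stability of $\C^\flat$, and exactness of $i$ follows since $i$ preserves (co)fibres by construction.

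For the equivalence, (1)$\Rightarrow$(2) is formal. An idempotent in $\Ho(\C)$ on $x$ lifts: using additivity, the homotopy-coherent idempotent can be constructed. More directly, we form the sequential colimit $x\xrightarrow{e}x\xrightarrow{e}\cdots$ in $\operatorname{Ind}(\C)$. Its image is a retract of $x$ in $\Ho(\operatorname{Ind}\C)$, hence lies in $\C^\flat=\C$, and it splits $e$ in $\Ho(\C)$.

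For (2)$\Rightarrow$(1), take $x^\flat\in\C^\flat$ a retract of $x\in\C$ with idempotent $e$. In $\Ho(\C)$ the image of $e$ splits, say $x\simeq y\oplus y'$. We then show $i(y)\simeq x^\flat$ in $\Ho(\C^\flat)$. Both are images of the same idempotent in the additive category $\Ho(\C^\flat)$, in which splittings are unique up to isomorphism. Hence $i$ is essentially surjective, so $i$ is an equivalence and $\C$ is idempotent complete.

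The main obstacle is the first part: showing cofibres of maps between retracts remain retracts of objects of $\C$, i.e.\ producing complements inside $\C$. I would handle it via the identification $\cofib(e)\simeq\Sigma a'\oplus\ldots$, or via $a\oplus\Sigma a$.
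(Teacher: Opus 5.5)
Your outline follows the same route as the results the paper cites; the paper itself gives no argument beyond the reference to Lurie. That route is: work in the stable $\infty$-category $\operatorname{Ind}(\C)$, identify $\C^\flat$ with retracts of objects of $\C$, split homotopy idempotents by a sequential colimit, and use uniqueness of splittings in $\Ho(\C^\flat)$ for the converse. Your (2)$\Rightarrow$(1) is correct as written; it uses only uniqueness of splittings in an ordinary category, not stability. Two other steps need repair.

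\textbf{The cofibre step is misframed.} Complements cannot be chosen in $\C$. If $a\oplus a'$ and $a'$ lie in $\C$, so does $a$, being the fibre of a morphism in $\C$. Likewise, a morphism $f\oplus\mathrm{id}_c$ between objects of $\C$ would force $\cofib[f]\simeq\cofib[f\oplus\mathrm{id}_c]\in\C$, which fails already for $b=0$.

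Nothing of this kind is needed. Take retractions $a\xrightarrow{s_a}x\xrightarrow{r_a}a$ and $b\xrightarrow{s_b}y\xrightarrow{r_b}b$. Then $f$ is a retract, in $\Fun{s\to t}{\operatorname{Ind}(\C)}$, of $g\coloneqq s_b\circ f\circ r_a\colon x\to y$, which is a morphism of $\C$. Applying the functor $\cofib$ exhibits $\cofib[f]$ as a retract of $\cofib[g]\in\C$. In $\Ho(\operatorname{Ind}(\C))$ this reads $g\cong f\oplus 0$ and $\cofib[g]\cong\cofib[f]\oplus\Sigma a'\oplus b'$: only $g$, not the complements, must lie in $\C$. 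Alternatively, identify $\C^\flat$ with the compact objects of $\operatorname{Ind}(\C)$, which are closed under finite colimits. Either way, this is not the main obstacle.

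\textbf{(1)$\Rightarrow$(2) is not formal.} It is exactly the implication that fails for general $\infty$-categories. Pointed spaces form an idempotent-complete $\infty$-category, yet their homotopy category has non-split idempotents (Freyd--Heller). The one sentence carrying the content is asserted without proof: that the colimit $T$ of $x\xrightarrow{e}x\xrightarrow{e}\cdots$ is a retract of $x$ splitting $e$. This is where stability enters. Here is one argument.
\begin{enumerate}
\item Let $T'$ be the colimit of $x\xrightarrow{1-e}x\xrightarrow{1-e}\cdots$.
\item For $z\in\C$, which is compact in $\operatorname{Ind}(\C)$, one gets $\Hom[\operatorname{Ind}(\C)]{z}{T}\cong e\circ\Hom[\C]{z}{x}$ and $\Hom[\operatorname{Ind}(\C)]{z}{T'}\cong (1-e)\circ\Hom[\C]{z}{x}$.
\item Hence $x\to T\oplus T'$ induces bijections on $\Hom[\operatorname{Ind}(\C)]{z}{-}$ for all $z\in\C$.
\item Since $\C$ is closed under shifts and generates $\operatorname{Ind}(\C)$, the cofibre of this map vanishes, so $x\simeq T\oplus T'$.
\item Taking $z=x$ and tracing $\mathrm{id}_x$ shows that $x\to T\to x$ equals $e$.
\item So $T$, a retract of $x$, lies in $\C^\flat\simeq\C$ and splits $e$ in $\Ho(\C)$.
\end{enumerate}
Equivalently, apply B\"okstedt--Neeman in $\Ho(\operatorname{Ind}(\C))$, which has countable coproducts, and note that summands of compact objects are compact.

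Drop your passing claim that homotopy idempotents lift to coherent ones ``using additivity''. It is unsupported, and in the stable case it is a consequence of the splitting rather than an input.
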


\subsection{Stable $\infty$-categories from Frobenius exact categories}

Recall that a Quillen exact category is said to be \emph{Frobenius} if it has
enough projectives, enough injectives, and the classes of projective and
injective objects coincide~\cite[Section~I.2]{Hap88}. The \emph{stable category}
of $\E$ is the quotient of $\E$ by its ideal of morphisms that factor through a
projective-injective object; this is a triangulated category by Happel's
Theorem~\cite[Theorem~I.2.6]{Hap88}. By definition, a \emph{stable isomorphism}
is a morphism in $\E$ whose class in $\underline{\E}$ is an isomorphism. The
following theorem, which shows that all algebraic triangulated categories admit
a stable $\infty$-categorical enhancement, is well-known to experts.

\begin{proposition}[Cisinski]
  \label{prop:Cisinski-Frobenius-stable-cat}
  Let $\E$ be a Frobenius exact category and $W$ the class of stable
  isomorphisms in $\E$. Then, the $\infty$-categorical localisation $\E[W^{-1}]$
  is a stable $\infty$-category and, moreover, there is a canonical equivalence
  of triangulated categories
  \[
    \Ho(\E[W^{-1}])\simeq\underline{\E}.
  \]
\end{proposition}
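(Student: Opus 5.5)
The plan is to use Cisinski's theory of $\infty$-categories with weak equivalences and fibrations (categories of fibrant objects) from~\cite[Chapter~7]{Cis19}. First I will equip $\E$ with the structure of a category of fibrant objects. The weak equivalences are the class $W$ of stable isomorphisms. The fibrations are the admissible epimorphisms (deflations) of $\E$. All objects are fibrant. Then I verify the axioms:
\begin{enumerate}
\item Deflations are stable under pullback and composition.
\item Trivial fibrations are stable under pullback. The key point is that a deflation $p\colon x\to y$ is a stable isomorphism if and only if its kernel is projective-injective. This holds because the conflation $\ker p\to x\to y$ yields a triangle in $\underline{\E}$ by Happel's Theorem.
\item Factorisation: any $f\colon x\to y$ factors as $x\to x\oplus I\to y$. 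Here $x\to I$ is an inflation into an injective and $(f,q)\colon x\oplus I\to y$ uses a deflation $q\colon I\to y$ from a projective $I$. The first map is a split inclusion that becomes an isomorphism in $\underline{\E}$, and the second map is a deflation.
\item $W$ satisfies 2-out-of-3 (indeed 2-out-of-6), since it is the preimage of the isomorphisms under $\E\to\underline{\E}$.
\end{enumerate}
Path objects are given by $x\to x\oplus P\to x\times x$ with $P$ projective covering $x$, which is a special case of the factorisation above.

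With this structure, Cisinski's results give the following. The localisation $\E[W^{-1}]$ has finite limits, which are computed by pullbacks along fibrations~\cite[Theorem~7.5.18]{Cis19}. Moreover, $\Ho(\E[W^{-1}])$ has the calculus-of-fractions description of morphisms, in which $\pi_0\Map{x}{y}$ is computed from spans $x\xleftarrow{\sim}x'\to y$ with the left leg a trivial fibration. I then identify $\Ho(\E[W^{-1}])$ with $\underline{\E}$. The functor $\E\to\underline{\E}$ inverts $W$, so by \Cref{rmk:localisation} it factors through the 1-categorical localisation. It remains to check that this induced functor is an equivalence. For essential surjectivity there is nothing to check. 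Full faithfulness follows from two facts: every trivial fibration $x'\to x$ splits, because its kernel is injective and $\operatorname{Ext}^1(x,\text{inj})=0$, and maps factoring through projective-injectives become zero, because projective-injectives are zero objects in $\E[W^{-1}]$ (they are stably isomorphic to $0$).

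For stability I will apply \Cref{thm:equivalent-def-stable}(4). The $\infty$-category $\E[W^{-1}]$ is pointed, since $0$ (equivalently any projective-injective) is both initial and final; this is checked via the explicit mapping space description, or dually using the cofibration structure given by inflations. It has finite limits by the above. The loop functor $\Omega$ is computed as follows: take a deflation $P\to x$ from a projective-injective, which is a fibration from an object equivalent to $0$. Its pullback along $0\to x$ is the kernel $\Omega_{\E}x$, i.e. the syzygy. On the homotopy category $\Omega$ therefore induces Happel's syzygy functor, which is an autoequivalence of $\underline{\E}$. To conclude that $\Omega$ is an equivalence of $\infty$-categories, I would run the dual argument: inflations form a category of cofibrant objects structure, so $\E[W^{-1}]$ also has finite colimits, with $\Sigma$ given by cosyzygies. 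The unit and counit of $\Sigma\dashv\Omega$ are then invertible, because invertibility of morphisms is detected in $\Ho(\E[W^{-1}])\simeq\underline{\E}$, where they are Happel's isomorphisms. This gives (3).

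Finally, the triangulated structure from \Cref{thm:triangulated_homotopy_categories} must match Happel's. A conflation $x\to y\to z$ gives a pushout square along the inflation $x\to I(x)$. The corresponding square in $\E$ is homotopy cocartesian, because colimits along cofibrations compute homotopy pushouts, so it yields a standard triangle $x\to y\to z\to\Sigma x$ in both structures. Both classes of triangles are the closure of these standard triangles, hence they agree.

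I expect the main obstacle to be the careful verification that (co)limits in $\E[W^{-1}]$ are computed by the strict pullbacks along deflations and pushouts along inflations, i.e. checking Cisinski's axioms precisely, including the requirement that trivial fibrations be stable under pullback. Everything else then reduces to Happel's theorem.
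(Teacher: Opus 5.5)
Your proposal is correct and follows essentially the same route as the paper: you put a category-of-fibrant-objects structure on $\E$ (deflations as fibrations, stable isomorphisms as weak equivalences), get finite limits from Cisinski and finite colimits dually, check pointedness, reduce the invertibility of $\Omega$ to $\Ho(\E[W^{-1}])\simeq\underline{\E}$, and compare directly with Happel's triangles. The only differences are minor: you verify $\Ho(\E[W^{-1}])\simeq\underline{\E}$ by hand where the paper cites Krause, and you deduce that $\Omega$ is an equivalence from the unit and counit of $\Sigma\dashv\Omega$ becoming invertible in the homotopy category, where the paper instead applies Cisinski's criterion for left exact functors \cite[Theorem~7.6.10]{Cis19} to the right adjoint $\Omega$.
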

\begin{proof}
  The argument below is essentially the one given
  in~\cite[Proposition~4.19.]{Cis10}, where it is shown that a Frobenius exact
  category gives rise to a stable/triangulated derivator. We use results
  from~\cite[Chapter~7]{Cis19} that can be understood as $\infty$-categorical
  refinements of results in~\cite{Cis10}. The homotopy category
  $\Ho(\E[W^{-1}])$ is the $1$-categorical localisation of $\E$ at the class of
  stable isomorphisms~\cite[Remark~7.1.6]{Cis19}, which is well-kown to be
  equivalent to the stable category $\underline{\E}$, see for
  example~\cite[Lemma~2.2.2]{Kra22}. The claims in the proposition follow as
  soon as we show that $\E[W^{-1}]$ is a stable $\infty$-category. Firstly,
  since $\E$ has a zero object, so does the $\infty$-category $\E[W^{-1}]$,
  see~\cite[Remark~7.1.15]{Cis19}. Secondly, it is readily verified that $\E$ is
  endowed with the structure of a category of fibrant objects in the sense
  of~\cite[Definition~7.5.7]{Cis19}, with fibrations the admissible epimorphisms
  (=deflations) and weak equivalences given by the stable isomorphisms
  and~\cite[Proposition~7.5.6]{Cis19} tells us that the $\infty$-category
  $\E[W^{-1}]$ has finite limits. Dually, the $\infty$-category $\E[W^{-1}]$ has
  finite colimits. It remains to show that the loop functor
  $\Omega\colon\E[W^{-1}]\longrightarrow\E[W^{-1}]$ is an equivalence
  (see~\cite[Theorem~7.6.10]{Cis19} and recall that $\Omega$ is a right
  adjoint). According to \Cref{prop:equivalences_of_stable_categories}, this can
  be verified at the level of homotopy categories, where we know already that
  the claim holds. The agreement between the triangulated structures on
  $\Ho(\E[W^{-1}])$ and on $\underline{\E}$ follows by direct inspection, using
  explicit description of pushouts and pullbacks in $\E[W^{-1}]$ given in the
  proof of \cite[Proposition~7.5.6]{Cis19}, keeping in mind the explicit
  description of the triangles in $\underline{\E}$ given
  in~\cite[Theorem~I.2.6]{Hap88}.
\end{proof}

\begin{remark}
  Let $\E$ be an essentially small Frobenius exact category. Implicitly, it is
  shown in~\cite[Theorem~A.19]{AGH19} that the $\infty$-categorical enhancement
  of Buchweitz's Theorem~\cite{Buc21} holds: there is a canonical equivalence of
  stable $\infty$-categories
  \[
    \E[W^{-1}]\stackrel{\sim}{\longrightarrow}\DerCat[b]{\E}/\DerCat[b]{\operatorname{proj}(\E)},
  \]
  where the right-hand side denotes the canonical $\infty$-categorical
  enhancement of the (small) singularity category of $\E$ via the Verdier
  quotient construction of~\Cref{defprop:Verdier_quotient}.
\end{remark}

\begin{remark}
  Recall that every algebraic triangulated category can be enhanced to a
  pre-triangulated differential graded category~\cite[Section~4.3]{Kel94}. Each
  differential graded category gives rise to an $\infty$-category by applying
  Lurie's differential graded nerve construction~\cite[Section~1.3.1]{Lur17},
  and the resulting $\infty$-category is stable if the original differential
  graded category is pre-triangulated~\cite[Theorem~3.18]{Fao17b}; working over
  a field, similar results hold for $A_\infty$-categories. For the case of
  stable differential graded categories, see~\cite[Section~6]{Che26}.
\end{remark}

\subsection{Compactly-generated and presentable stable $\infty$-categories}

There is also a robust theory of large stable $\infty$-categories, at least
when these can be controlled by small $\infty$-categories in a suitable sense. We recall the
necessary definitions.

\begin{definition}
  \label{def:cgen}
  Let $\C$ be a stable $\infty$-category with small coproducts, so that the
  triangulated homotopy category $\Ho(\C)$ also admits small coproducts. An object
  $x\in\C$ is \emph{compact} if it is compact as an object of the triangulated homotopy
  category $\Ho(\C)$. Similarly, we say that $\C$ is \emph{compactly generated}
  if its homotopy category $\Ho(\C)$ is a compactly generated triangulated category.
\end{definition}

\begin{remark}
  Let $\C$ be an $\infty$-category with small filtered colimits. An object
  $x\in\C$ is compact if the corepresentable functor
  \[
    \Map[\C]{x}{-}\colon\C\longrightarrow\Spaces
  \]
  preserves small filtered colimits. If $\C$ is stable, this
  property can be checked in the homotopy
  category~\cite[Proposition~1.4.4.1]{Lur17}. Moreover, a
  compactly generated stable $\infty$-category in the sense of~\Cref{def:cgen}
  is also
  compactly-generated in the sense of~\cite[Definition~5.5.7.1]{Lur09}, that is
  the full subcategory $\C^\omega\subseteq\C$ of compact objects is essentially
  small and the canonical functor
  \[
    \operatorname{Ind}(\C^\omega)\stackrel{\sim}{\longrightarrow}\C
  \]
  is an equivalence~\cite[Proposition~1.4.4.1, Corollary~1.4.4.2,
  and~Remark~1.4.4.3]{Lur17}. Here, $\operatorname{Ind}(\C^\omega)$ denotes the
  $\operatorname{Ind}$-completion of $\C^\omega$, which is the $\infty$-category
  obtained from $\C$ by freely adjoining small filtered
  colimits~\cite[Section~5.3.5]{Lur09}.
\end{remark}

\begin{example}
  \label{ex:spectra}
  The most important stable $\infty$-category is, arguably, the
  stable $\infty$-category $\Spectra$ of spectra, which can be defined as the limit of the
  tower~\cite[Definition~1.4.3.1, Remark~1.4.2.25]{Lur17}
  \[
    \begin{tikzcd}
      \cdots\rar{\Omega}&\rar{\Omega}(\Grpd_\infty)_*&\rar{\Omega}(\Grpd_\infty)_*&\rar{\Omega}(\Grpd_\infty)_*&(\Grpd_\infty)_*,
    \end{tikzcd}
  \]
  where $(\Grpd_\infty)_*$ denotes the $\infty$-category of pointed $\infty$-groupoids.
  The $\infty$-category $\Spectra$ is compactly generated by a canonical compact
  object $\SS\in\Spectra$ known as the \emph{sphere spectrum}.  The role of the $\infty$-category of spectra in the theory of
  stable $\infty$-categories is entirely analogous to that of the category of
  abelian groups in the theory of abelian categories.
\end{example}

\begin{defprop}[{\cite[Proposition~1.4.4.9]{Lur17}}]
  A stable $\infty$-category $\C$ is \emph{presentable} if there exists a small
  $\infty$-category $A$ such that $\C$ is an accessible left-exact
  localisation\footnote{That is, there exists a reflective localisation
    \[
      \begin{tikzcd}[ampersand replacement=\&]
        \Fun{A}{\Spectra}\rar[shift left]{L}\&\C\lar[shift left,hook']
      \end{tikzcd}
    \]
    such that the localisation functor $L$ preserves finite limits and such that $\C$ is
    closed under the formation of $\kappa$-filtered colimits in $\Fun{A}{\Spectra}$
    for some regular cardinal $\kappa$.} of
  $\Fun{A}{\Spectra}$. In particular, the stable $\infty$-category $\Spectra$ of spectra
  is presentable.
\end{defprop}

\begin{remark}
  Compactly-generated stable $\infty$-categories are
  presentable.
\end{remark}

The following result  shows that presentable stable $\infty$-categories can be
regarded as enhancements of Neeman's well-generated triangulated categories.

\begin{theorem}[\cite{Ros05,Lur09}]
  Let $\C$ be a presentable stable $\infty$-category. Then, its triangulated
  homotopy category $\Ho(\C)$ is well generated in the sense of Neeman~\cite{Nee01}.
\end{theorem}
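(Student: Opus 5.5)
Since $\C$ is presentable, we fix a small $\infty$-category $A$, a regular cardinal $\kappa$, and a reflective localisation $L\colon\Fun{A}{\Spectra}\rightleftarrows\C$ with $\C$ closed under $\kappa$-filtered colimits in $\Fun{A}{\Spectra}$. The plan has three steps: show that $\Ho(\Fun{A}{\Spectra})$ is compactly generated, identify $\Ho(\C)$ with a Bousfield localisation of it whose kernel is generated by a set, and then invoke Neeman's theory for localisations of this kind. Throughout, the colimit computations are carried out in the $\infty$-categories, where they are controlled by Lurie's presentability machinery, and only the result is passed to homotopy categories.

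For the first step, we observe that $\Fun{A}{\Spectra}$ is stable by \Cref{prop:stable_functor_categories}. It is compactly generated by the set of objects $\Sigma^n\Sigma^\infty_+\Map[A]{a}{-}$ for $a\in A$ and $n\in\mathbb{Z}$. Indeed, by the Yoneda lemma, maps out of these objects compute $\pi_{-n}$ of the spectrum $F(a)$, and these homotopy groups commute with coproducts and detect zero objects. Hence $\Ho(\Fun{A}{\Spectra})$ is compactly generated, and in particular well generated.

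For the second step, let $K=\ker L$. Since $L$ is a left-exact localisation, $K$ is a stable subcategory closed under colimits, and $\Ho(\C)$ is equivalent to the Verdier quotient $\Ho(\Fun{A}{\Spectra})/\Ho(K)$. The key input is accessibility. Because $\C$ is closed under $\kappa$-filtered colimits, the localisation is generated by a \emph{set} of morphisms; concretely, Lurie's result that accessible localisations are generated by a small set of maps (\cite[Proposition~5.5.4.15]{Lur09}). Taking cofibres of these maps, $K$ becomes the localising subcategory generated by a set $S$ of objects, and these objects are $\kappa'$-compact for some larger regular cardinal $\kappa'$.

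For the third step, we invoke Neeman's theorem~\cite{Nee01}: if $\mathcal{T}$ is well generated and $\mathcal{S}$ is the localising subcategory generated by a set of objects, then $\mathcal{T}/\mathcal{S}$ is well generated, with the images of the $\kappa'$-compact generators serving as a perfect generating set. Applied with $\mathcal{T}=\Ho(\Fun{A}{\Spectra})$ and $\mathcal{S}=\Ho(K)$, this gives that $\Ho(\C)$ is well generated.

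The main obstacle is the second step. One must show that the kernel is generated by a set of objects, and also that $\Ho(K)$, computed $\infty$-categorically, is the triangulated localising subcategory generated by $S$. I would try first to use that $\Ho$ preserves coproducts and cofibre sequences, so that any localising subcategory of the homotopy category lifts to a full stable subcategory closed under colimits. An alternative would be to cite Rosický's direct argument that $\kappa$-accessibility of $\C$ yields a set of $\kappa$-compact objects that perfectly generates $\Ho(\C)$.
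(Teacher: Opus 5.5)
Your argument is correct, but it follows a genuinely different route from the paper.

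\textbf{The paper's route.} The paper never works with the localisation of $\Fun{A}{\Spectra}$. It presents $\C$ by a combinatorial model category via \cite[Proposition~A.3.7.6]{Lur09}, and then quotes Rosick\'y's theorem \cite[Proposition~6.10]{Ros05} that homotopy categories of combinatorial stable model categories are well generated. So the whole proof is outsourced to rectification plus Rosick\'y's accessibility argument. Strictly, one must also choose the presenting model category to be stable in the model-categorical sense, e.g.\ by passing to pointed objects.

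\textbf{Your route.} You stay inside stable $\infty$-categories and triangulated categories. Starting from the paper's definition of presentability, you exhibit $\Ho(\C)$ as a Bousfield localisation of the compactly generated category $\Ho(\Fun{A}{\Spectra})$ at a localising subcategory generated by a set. You then conclude with Neeman's localisation theorem \cite{Nee01}. What this buys you:
\begin{itemize}
\item it avoids model categories altogether;
\item the only input about $\C$ is \cite[Proposition~5.5.4.15]{Lur09};
\item it is essentially the classical description of well generated categories as such localisations of compactly generated ones;
\item it yields an explicit perfect generating set.
\end{itemize}

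\textbf{The step to write out.} You still need to identify $K$.
\begin{itemize}
\item Let $S_0$ be the small set of morphisms generating the $L$-equivalences as a strongly saturated class.
\item Let $\mathcal{L}$ be the smallest stable subcategory closed under small colimits containing $\cofib[f]$ for $f\in S_0$.
\item The class of morphisms with cofibre in $\mathcal{L}$ is strongly saturated. Cofibres are unchanged by pushout and commute with colimits of arrows, and two-out-of-three follows from the cofibre sequence $\cofib[f]\to\cofib[g\circ f]\to\cofib[g]$.
\item Hence this class contains every morphism inverted by $L$. Applying this to $x\to 0$ with $x\in K$ gives $K\subseteq\mathcal{L}$.
\item The reverse inclusion holds because $L$ is exact, so $\cofib[f]\in K$ for $f\in S_0$.
\end{itemize}
Your lifting idea then completes the comparison. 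A replete full subcategory of a stable $\infty$-category is a stable subcategory closed under small colimits if and only if its image in the homotopy category is a localising subcategory.
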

\begin{proof}
  By \cite[Proposition~6.10]{Ros05}, the homotopy category of a combinatorial
  stable model category is well generated. Since every presentable
  $\infty$-category is the underlying $\infty$-category of a combinatorial model
  category~\cite[Proposition~A.3.7.6]{Lur09} (see
  also~\cite[Proposition~1.3.4.22]{Lur17}), the claim follows.
\end{proof}

\begin{definition}
  We denote by $\PrLSt$ the $\infty$-category of presentable stable
  $\infty$-ca\-te\-go\-ries and colimit-preserving functors between them. Given stable $\infty$-categories
  $\C,\D\in\PrLSt$, we denote by $\LFun{\C}{\D}$ the $\infty$-category of
  colimit-preserving functors $\C\to\D$. By the Adjoint Functor
  Theorem~\cite[Corollary~5.5.2.9]{Lur09}, every colimit-preserving functor
  $\C\to\D$ between presentable $\infty$-categories is a left adjoint, hence the
  notation.
\end{definition}

The following theorem provides some evidence for the richness of the theory of
presentable stable $\infty$-categories. For differential graded variants of this
result in the compactly-generated case, see~\cite[Corollary 7.6]{Toe07}
and~\cite[Remark~2.23]{Jas25}; the well-generated case is treated
in~\cite{LRG22}.

\begin{theorem}[{\cite[Corollary~4.8.2.19]{Lur17} and~\cite[Lemma~D.5.3.3.]{Lur18SAG}}]
  \label{thm:PrLSt-tensor_product}
  The $\infty$-ca\-te\-go\-ry $\PrLSt$ admits a closed symmetric monoidal structure
  with the internal $\operatorname{Hom}$ functor $(\C,\D)\mapsto\LFun{\C}{\D}$.
  In particular, for each pair of presentable stable $\infty$-categories $\C$
  and $\D$, there exists a presentable stable $\infty$-category $\C\otimes\D$
  such that, for each presentable stable $\infty$-category $\E$, there is a
  canonical equivalence of $\infty$-categories
  \[
    \LFun{\C\otimes\D}{\E}\stackrel{\sim}{\longrightarrow}\LFun{\C}{\LFun{\D}{\E}}.
  \]
  The unit of this monoidal structure is the $\infty$-category $\Spectra$ of
  spectra. Moreover, compactly generated stable $\infty$-categories form a
  monoidal subcategory of $\PrLSt$.
\end{theorem}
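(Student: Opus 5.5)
The statement is a survey-level citation of Lurie's results, so I will assemble it from Lurie's framework rather than build it from scratch. I treat presentable stable $\infty$-categories as a full subcategory of presentable $\infty$-categories $\mathrm{Pr}^L$.

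First, recall that $\mathrm{Pr}^L$ carries a closed symmetric monoidal structure in which $\C\otimes\D$ corepresents functors $\C\times\D\to\E$ that preserve colimits separately in each variable. Concretely, $\C\otimes\D\simeq\operatorname{Fun}^R(\C^\op,\D)$, the $\infty$-category of limit-preserving functors. Its internal Hom is $\LFun{\C}{\D}$, and its unit is $\Grpd_\infty$. These are the inputs from \cite[Section~4.8.1]{Lur17}.

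Next, I show that stability is a smashing localisation of $\mathrm{Pr}^L$. The functor $\C\mapsto\C\otimes\Spectra$ identifies $\C\otimes\Spectra$ with the stabilisation of $\C$: the limit of the tower of $\Omega$ on pointed objects, exactly as in \Cref{ex:spectra}. This stabilisation is stable by \Cref{thm:equivalent-def-stable}(4), since $\Omega$ becomes invertible on the limit. One then checks two facts.
\begin{enumerate}
\item $\Spectra\otimes\Spectra\simeq\Spectra$ via the multiplication map. Equivalently, $\Spectra$ is an idempotent commutative algebra in $\mathrm{Pr}^L$.
\item $\C$ is stable if and only if the unit map $\C\to\C\otimes\Spectra$ is an equivalence.
\end{enumerate}
The standard consequence for idempotent algebras is that the essential image of $-\otimes\Spectra$ is a symmetric monoidal localisation, with unit $\Spectra$. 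It inherits the tensor product of $\mathrm{Pr}^L$, and $\LFun{\C}{\D}$ remains stable whenever $\D$ is. The latter holds because limits in functor categories are computed pointwise, as in \Cref{prop:stable_functor_categories}, and colimit-preserving functors are closed under these finite limits in the stable setting. This yields the displayed adjunction equivalence, whose left side corepresents bilinear maps.

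The main obstacle is step (1), the idempotence of $\Spectra$. I would prove it by identifying $\Spectra\otimes\Spectra$ with $\operatorname{Fun}^R(\Spectra^\op,\Spectra)$ and using that $\Spectra$ is the free presentable stable $\infty$-category on one generator. Evaluation at $\SS$ gives $\LFun{\Spectra}{\E}\simeq\E$ for stable $\E$, and idempotence follows.

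For the compactly generated part, write $\C\simeq\operatorname{Ind}(\C^\omega)$ and $\D\simeq\operatorname{Ind}(\D^\omega)$. Then I show that $\C\otimes\D\simeq\operatorname{Ind}(\C^\omega\otimes\D^\omega)$. The right-hand side uses the tensor product of small stable $\infty$-categories, i.e.\ the stable idempotent-complete envelope generated by the objects $x\otimes y$. The objects $x\otimes y$ are compact, because $\Map{x\otimes y}{-}\simeq\Map{x}{\operatorname{Map}(y,-)}$ and both factors commute with filtered colimits. They generate, because a right orthogonal to all $x\otimes y$ corresponds to a functor killed by all generators in each variable. Finally, the unit $\Spectra$ is compactly generated by $\SS$.
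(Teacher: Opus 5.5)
The paper does not prove this statement; it only cites \cite[Corollary~4.8.2.19]{Lur17} and \cite[Lemma~D.5.3.3]{Lur18SAG}. Your sketch is correct and is essentially the argument behind those references: $\Spectra$ is an idempotent commutative algebra in $\mathrm{Pr}^L$ whose local objects are exactly the presentable stable $\infty$-categories, and $\C\otimes\D$ is compactly generated by the objects $x\otimes y$ with $x\in\C^\omega$ and $y\in\D^\omega$. Two minor points: your step~(1) is not really an obstacle, since it is the case $\C=\Spectra$ of step~(2); and in the compactness step you need to justify why the right adjoint of $-\otimes y\colon\C\to\C\otimes\D$ preserves filtered colimits (for compactly generated $\C$ this is equivalent to what you are proving), for instance via $\C\otimes\D\simeq\operatorname{Fun}^{\mathrm{ex}}((\D^\omega)^{\mathrm{op}},\C)$, under which that right adjoint is evaluation at $y$ and filtered colimits are computed pointwise.
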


\Cref{thm:PrLSt-tensor_product} is behind several interesting formulas, such as
the following very general form of the Eilenberg--Watts
Theorem~\cite{Eil60,Wat60}.

\begin{theorem}[{\cite[Proposition~7.1.2.4]{Lur17}
    and~\cite[Proposition~D.7.2.3]{Lur18SAG}}]
  Let $\C$ and $\D$ be presentable stable $\infty$-categories and suppose that
  $\C$ is compactly generated.\footnote{More generally, one may assume that $\C$
  is `compactly assembled', see~\Cref{rmk:dualisability}.} Then, there is an equivalence of presentable
  stable $\infty$-categories
  \[
    \D\otimes\C^\vee\stackrel{\sim}{\longrightarrow}\LFun{\C}{\D},
  \]
  where $\C^\vee\coloneqq\LFun{\C}{\Spectra}$ is the dual of $\C$. If $R$ and
  $S$ are ring spectra, there are equivalences of
  compactly-generated stable $\infty$-categories
  \[
    \DerCat{R}\otimes\DerCat{S^\op}\stackrel{\sim}{\longrightarrow}\LFun{\DerCat{S}}{\DerCat{R}};
  \]
  moreover, $\DerCat{R}\otimes\DerCat{S^\op}$ is equivalent to the stable
  $\infty$-category of $S$-$R$-bimodule spectra.\footnote{Implicitly, we work
    with right modules and write $\DerCat{R}$ for the derived $\infty$-category
    of $R$-module spectra, which we do not define here, and that is simply
    called the $\infty$-category of $R$-module spectra in~\cite{Lur17}.}
\end{theorem}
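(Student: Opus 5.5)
The statement has two parts: a general Eilenberg--Watts equivalence $\D\otimes\C^\vee\simeq\LFun{\C}{\D}$ for compactly generated $\C$, and its specialisation to module spectra. The plan is to show that $\C$ is a dualisable object of the symmetric monoidal $\infty$-category $\PrLSt$ of \Cref{thm:PrLSt-tensor_product}, and then to use the formal consequences of dualisability.

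\textbf{Step 1: compute $\C^\vee$.} Write $\C\simeq\operatorname{Ind}(\C^\omega)$ with $\C^\omega$ essentially small and stable. Colimit-preserving functors out of $\operatorname{Ind}(\C^\omega)$ correspond to exact functors out of $\C^\omega$. Hence
\[
  \C^\vee=\LFun{\C}{\Spectra}\simeq\Fun<ex>{\C^\omega}{\Spectra}\simeq\operatorname{Ind}((\C^\omega)^\op).
\]
The last equivalence is the standard identification of exact (equivalently, left exact) spectrum-valued functors on a small stable $\infty$-category with Ind-objects of the opposite category. In particular $\C^\vee$ is again compactly generated.

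\textbf{Step 2: build the duality data.} The evaluation map
\[
  \C\otimes\C^\vee\to\Spectra
\]
is induced by the mapping spectrum pairing $\C^\omega\times(\C^\omega)^\op\to\Spectra$. The coevaluation
\[
  \Spectra\to\C^\vee\otimes\C\simeq\operatorname{Ind}((\C^\omega)^\op\otimes\C^\omega)
\]
sends $\SS$ to the Ind-object given by the identity bimodule, i.e.\ the colimit over compact objects $x$ of $x^\vee\boxtimes x$. Here one uses that tensor products of compactly generated categories are $\operatorname{Ind}$ of the tensor product of compact parts, which is the last assertion of \Cref{thm:PrLSt-tensor_product}.

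\textbf{Step 3: check the triangle identities.} I expect this to be the main obstacle. The identities can be verified on compact generators $x\in\C^\omega$, since all functors involved preserve colimits. On such an $x$, the composite
\[
  \C\to\C\otimes\C^\vee\otimes\C\to\C
\]
sends $x$ to $\operatorname{colim}_y\Map{y}{x}\otimes y$, which is the co-Yoneda identification with $x$. The other identity is dual. Keeping all of this coherent is exactly what requires Lurie's machinery, so I would largely invoke \cite[Proposition~D.7.2.3]{Lur18SAG} for it.

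\textbf{Step 4: conclude the general statement.} Once $\C$ is dualisable with dual $\C^\vee$, the closed monoidal structure gives, for every test object $\E$,
\[
  \LFun{\E}{\LFun{\C}{\D}}\simeq\LFun{\E\otimes\C}{\D}\simeq\LFun{\E}{\D\otimes\C^\vee}.
\]
The Yoneda lemma then yields the equivalence $\D\otimes\C^\vee\simeq\LFun{\C}{\D}$.

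\textbf{Step 5: specialise to module spectra.} Take $\C=\DerCat{S}$ and $\D=\DerCat{R}$. Then $\C^\omega$ is the thick closure of $S$, and Step~1 gives
\[
  \C^\vee\simeq\operatorname{Ind}(\operatorname{Perf}(S)^\op)\simeq\DerCat{S^\op},
\]
using $\operatorname{Perf}(S)^\op\simeq\operatorname{Perf}(S^\op)$ via $\Map{-}{S}$.

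For the bimodule description, note that $\DerCat{R}\otimes\DerCat{S^\op}$ is compactly generated by the single object $R\boxtimes S$. Its endomorphism ring spectrum is $R\otimes S^\op$, so the category is equivalent to modules over $R\otimes S^\op$, i.e.\ to $S$-$R$-bimodule spectra. This uses the Schwede--Shipley type recognition of compactly generated stable $\infty$-categories with a single compact generator \cite[Theorem~7.1.2.1]{Lur17}.
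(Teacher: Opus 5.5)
The paper gives no argument for this statement beyond the citations to \cite[Proposition~7.1.2.4]{Lur17} and \cite[Proposition~D.7.2.3]{Lur18SAG}, so there is no proof to compare with line by line. Your outline is correct in outline and is the standard argument behind those references:
\begin{itemize}
\item dualisability of $\C$ in $\PrLSt$, with dual $\C^\vee\simeq\operatorname{Ind}((\C^\omega)^\op)$;
\item the internal-Hom formula for dualisable objects;
\item specialisation to module spectra via the Schwede--Shipley recognition theorem.
\end{itemize}
Note, however, that at the decisive point (Step~3) you invoke the very proposition the statement cites. As written, your proposal therefore rests on Lurie's result for the key claim, just as the paper does.

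That step can be avoided altogether. The ingredients the paper already uses in its proof of $\operatorname{Ind}(\C)\simeq\Fun<ex>{\C^\op}{\Spectra}$ suffice, namely \cite[Proposition~4.8.1.17]{Lur17}, \cite[Proposition~5.2.6.2]{Lur09} and \cite[Proposition~5.3.5.10]{Lur09}. Starting from your Step~1, one gets directly
\[
  \D\otimes\C^\vee\simeq\operatorname{Ind}((\C^\omega)^\op)\otimes\D\simeq\RFun{\operatorname{Ind}((\C^\omega)^\op)^\op}{\D}\simeq\LFun{\operatorname{Ind}((\C^\omega)^\op)}{\D^\op}^\op\simeq\Fun<ex>{\C^\omega}{\D}\simeq\LFun{\C}{\D}.
\]
This proves the first assertion without constructing evaluation and coevaluation maps. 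It also avoids the identification $\operatorname{Ind}(\mathcal{A})\otimes\operatorname{Ind}(\mathcal{B})\simeq\operatorname{Ind}(\mathcal{A}\wotimes\mathcal{B})$, which is more than the last assertion of \Cref{thm:PrLSt-tensor_product} actually states. Your Step~5 then goes through as written.

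If you prefer to keep your route, no higher coherence is needed. Duality data in a symmetric monoidal $\infty$-category only require the triangle identities up to homotopy, i.e.\ in $\Ho(\PrLSt)$. So it suffices to give a natural equivalence in $\LFun{\C}{\C}\simeq\Fun<ex>{\C^\omega}{\C}$, and the co-Yoneda lemma supplies one.

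What does need fixing is your description of the coevaluation. The assignment $x\mapsto x^\vee\boxtimes x$ is not a functor of $x$, so ``the colimit over compact $x$'' must be read as the coend $\int^{x\in\C^\omega}x^\vee\boxtimes x$. Equivalently, it is the object of $\operatorname{Ind}((\C^\omega)^\op\wotimes\C^\omega)\simeq\Fun<ex>{\C^\omega\wotimes(\C^\omega)^\op}{\Spectra}$ given by the bifunctor $(x,y)\mapsto\MapSp{y}{x}$. Correspondingly, the composite in Step~3 is $x\mapsto\int^{y\in\C^\omega}\MapSp{y}{x}\otimes y$, using mapping spectra rather than mapping spaces.
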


\begin{remark}
  Applications of Lurie's tensor product to the study of Holm-J{\o}r\-gen\-sen's
  $Q$-shaped derived categories~\cite{HJ22} can be found
  in~\cite[Remark.~3.23]{Jas25}.
\end{remark}

\subsection{Smooth and proper stable $\infty$-categories}

The category $\operatorname{Hmo}$ of differential graded categories up to Morita
equivalence~\cite{Toe07}, constructed using~\cite[Théorème~5.3]{Tab05}, plays an
important role in the theory of algebraic triangulated categories. Below we
discuss its analogue in the context of stable
$\infty$-categories.

\begin{definition}
  We denote by $\cat[\infty]<ex,\omega>$ the full subcategory of
  $\cat[\infty]<ex>$ spanned by the stable $\infty$-categories that are
  idempotent complete.
\end{definition}

\begin{theorem}[{\cite[Theorem~3.1]{BGT13}}]
  The $\infty$-category $\cat[\infty]<ex,\omega>$ admits a closed symmetric
  monoidal structure with the internal $\operatorname{Hom}$ functor
  $(\C,\D)\mapsto\Fun<ex>{\C}{\D}$. In particular, for each pair of
  idempotent-complete stable $\infty$-categories $\C$ and $\D$, there exists an
  idempotent-complete stable $\infty$-category $\C\wotimes\D$ such that, for
  each idempotent complete stable $\infty$-category $\E$, there is a canonical
  equivalence of $\infty$-categories
  \[
    \Fun<ex>{\C\wotimes\D}{\E}\stackrel{\sim}{\longrightarrow}\Fun<ex>{\C}{\Fun<ex>{\D}{\E}}.
  \]
  The unit of this monoidal structure is the $\infty$-category $\Spectra^\omega$
  of compact spectra (see~\Cref{ex:spectra}).
\end{theorem}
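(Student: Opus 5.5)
The plan is to obtain the symmetric monoidal structure on $\cat[\infty]<ex,\omega>$ by transporting the one on $\PrLSt$ from \Cref{thm:PrLSt-tensor_product} along $\operatorname{Ind}$-completion. First I would set up the functor $\operatorname{Ind}\colon\cat[\infty]<ex,\omega>\to\PrLSt$. It sends $\C$ to $\operatorname{Ind}(\C)$, which is compactly generated with $\operatorname{Ind}(\C)^\omega\simeq\C$; here idempotent completeness of $\C$ is essential. An exact functor $F$ is sent to its unique filtered-colimit-preserving extension, which preserves compact objects. I would then check that $\operatorname{Ind}$ identifies $\cat[\infty]<ex,\omega>$ with the non-full subcategory of compactly generated stable $\infty$-categories and compact-object-preserving colimit-preserving functors. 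The key input is the universal property of $\operatorname{Ind}$-completion: restriction gives
\[
  \LFun{\operatorname{Ind}(\C)}{\D}\stackrel{\sim}{\longrightarrow}\Fun<ex>{\C}{\D}
\]
for every presentable stable $\D$ (Lurie, HTT 5.3.5.10 and 5.5.1.9, together with exactness).

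Next I would use the last sentence of \Cref{thm:PrLSt-tensor_product}, namely that compactly generated categories form a monoidal subcategory. Concretely, $\operatorname{Ind}(\C)\otimes\operatorname{Ind}(\D)\simeq\operatorname{Ind}(\C\wotimes\D)$, where I define $\C\wotimes\D\coloneqq(\operatorname{Ind}(\C)\otimes\operatorname{Ind}(\D))^\omega$. The tensor product of compact-preserving functors again preserves compacts, because compacts are generated under finite colimits and retracts by the objects $x\otimes y$. The unit is $\Spectra=\operatorname{Ind}(\Spectra^\omega)$, so the unit of $\wotimes$ is $\Spectra^\omega$. This endows $\cat[\infty]<ex,\omega>$ with a symmetric monoidal structure for which $\operatorname{Ind}$ is symmetric monoidal.

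For closedness and the displayed equivalence, I would compute, for $\E$ idempotent complete,
\[
  \Fun<ex>{\C\wotimes\D}{\E}\subseteq\Fun<ex>{\C\wotimes\D}{\operatorname{Ind}(\E)}\simeq\LFun{\operatorname{Ind}(\C)\otimes\operatorname{Ind}(\D)}{\operatorname{Ind}(\E)},
\]
and the right-hand side is $\LFun{\operatorname{Ind}(\C)}{\LFun{\operatorname{Ind}(\D)}{\operatorname{Ind}(\E)}}$, which equals $\Fun<ex>{\C}{\Fun<ex>{\D}{\operatorname{Ind}(\E)}}$. The remaining task is to show that the subcategories of functors landing in $\E=\operatorname{Ind}(\E)^\omega$ correspond on both sides. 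A bifunctor sends $\C\wotimes\D$ into compacts if and only if it sends all $x\otimes y$ into compacts, because $\C\wotimes\D$ is the thick closure of these objects. That is exactly the condition that the adjoint functor lands in $\Fun<ex>{\D}{\E}$. Finally, $\Fun<ex>{\D}{\E}$ is stable by the earlier proposition and idempotent complete, since idempotents split pointwise in $\E$. So it is an internal Hom in $\cat[\infty]<ex,\omega>$.

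I expect the main obstacle to be the compact-generation statement: that $(\operatorname{Ind}(\C)\otimes\operatorname{Ind}(\D))^\omega$ is generated by the $x\otimes y$, and that $\otimes$ preserves compact-preserving functors. I would prove it by identifying $\operatorname{Ind}(\C)\otimes\operatorname{Ind}(\D)$ with $\operatorname{Ind}$ of the category of finitely-colimit-preserving-in-each-variable bifunctors (Lurie, HA 4.8.1). From there, compactness of $x\otimes y$ follows from the fact that the right adjoint $\LFun{\operatorname{Ind}(\D)}{-}$ commutes with filtered colimits on compactly generated categories. The higher coherence of the symmetric monoidal structure I would not construct by hand; instead I would invoke the general machinery, as in BGT, for restricting a symmetric monoidal structure to a subcategory closed under tensor and unit.
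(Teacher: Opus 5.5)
Your argument is correct. The survey gives no proof of this statement; it is quoted from \cite[Theorem~3.1]{BGT13}. Your route is a standard way to establish it: identify $\cat[\infty]<ex,\omega>$ via $\operatorname{Ind}$ with the compactly generated objects and compact-preserving morphisms of $\PrLSt$, restrict the tensor product of \Cref{thm:PrLSt-tensor_product}, and obtain the internal Hom by cutting $\LFun{\operatorname{Ind}(\C)\otimes\operatorname{Ind}(\D)}{\operatorname{Ind}(\E)}\simeq\Fun<ex>{\C}{\Fun<ex>{\D}{\operatorname{Ind}(\E)}}$ down to functors with values in $\E$.

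You are right that this last step cannot be skipped. The internal Hom is not inherited from $\PrLSt$: the compact objects of $\LFun{\operatorname{Ind}(\D)}{\operatorname{Ind}(\E)}\simeq\operatorname{Ind}(\E\wotimes\D^\op)$ form $\E\wotimes\D^\op$, which in general differs from $\Fun<ex>{\D}{\E}$. The two agree when $\D$ is smooth and proper, cf.~\Cref{thm:dualisable-Hmo}.

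One sentence should be repaired. Compactness of $x\otimes y$ is a statement about filtered colimits inside $\operatorname{Ind}(\C)\otimes\operatorname{Ind}(\D)$. It has nothing to do with the right adjoint $\LFun{\operatorname{Ind}(\D)}{-}$ of $-\otimes\operatorname{Ind}(\D)$ on $\PrLSt$. In any case it is automatic from the identification $\operatorname{Ind}(\C)\otimes\operatorname{Ind}(\D)\simeq\operatorname{Ind}(\C\otimes^{\mathrm{rex}}\D)$ that you already take from \cite[Section~4.8.1]{Lur17}. The objects $x\otimes y$ lie in $\C\otimes^{\mathrm{rex}}\D$ and generate it under finite colimits, so the compact objects are precisely their thick closure.

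For comparison, there is a route that never leaves small categories.
\begin{itemize}
\item By \cite[Section~4.8.1]{Lur17}, $\mathrm{Cat}^{\mathrm{rex}}_\infty$ (small $\infty$-categories with finite colimits and right exact functors) is closed symmetric monoidal, with internal Hom $\operatorname{Fun}^{\mathrm{rex}}$ and unit the $\infty$-category of finite spaces.
\item The full subcategory $\cat[\infty]<ex,\omega>$ is reflective, the reflector being idempotent-completed stabilisation.
\item It is closed under $\operatorname{Fun}^{\mathrm{rex}}(\D,-)$ for every $\D$, because right exact functors into a stable idempotent-complete $\E$ form a stable idempotent-complete category.
\item Hence the localisation is compatible with the tensor product in the sense of \cite[Section~2.2.1]{Lur17}. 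So $\cat[\infty]<ex,\omega>$ inherits a closed symmetric monoidal structure with internal Hom $\operatorname{Fun}^{\mathrm{rex}}(\D,\E)=\Fun<ex>{\D}{\E}$, and unit the reflection of finite spaces, namely $\Spectra^\omega$.
\end{itemize}
This gives closedness without any compact-generation analysis. Your approach instead yields directly the formula $\operatorname{Ind}(\C\wotimes\D)\simeq\operatorname{Ind}(\C)\otimes\operatorname{Ind}(\D)$ and the thick generation by the $x\otimes y$. That is the form needed when comparing with $\PrLSt$, e.g.\ in \Cref{thm:dualisable-Hmo} and \Cref{rmk:dualisability}.
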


\begin{remark}
  The $\infty$-category $\cat[\infty]<ex,\omega>$ is the underlying
  $\infty$-category of a combinatorial model
  structure~\cite[Theorem~4.23]{BGT13} and hence it is a presentable
  $\infty$-category. In fact, more is true: The $\infty$-category
  $\cat[\infty]<ex,\omega>$ is compactly generated in the sense
  of~\cite[Definition~5.5.7.1]{Lur09}, see~\cite[Corollary~4.25]{BGT13}.
\end{remark}

The notions of smooth and proper differential graded categories admit variants for
stable $\infty$-categories. In order to state the definition we need a
preparatory result.

\begin{defprop}[{\cite[Remark~5.3.2]{Lur09} and~\cite[Proposition~3.2]{BGT13}}]
  Let $\C$ be an essentially small stable $\infty$-category. There is a
  canonical equivalence of $\infty$-categories
  \[
    \operatorname{Ind}(\C)\stackrel{\sim}{\longrightarrow}\Fun<ex>{\C^\op}{\Spectra}.
  \]
  In particular, the Yoneda embedding $\C^\op\to\Fun{\C^\op}{\Grpd_\infty}$
  factors through the \emph{stable Yoneda embedding}
  \[
    \C\longrightarrow\Fun<ex>{\C^\op}{\Spectra}
  \]
  whose transpose defines the \emph{diagonal $\C$-bimodule}
  \[
    \C^\op\wotimes\C\longrightarrow\Spectra,\qquad (x,y)\longmapsto\MapSp[\C]{x}{y},
  \]
  whose values are called \emph{mapping spectra}.
\end{defprop}
\begin{proof}
  We give a simple proof of the equivalence
 \[
   \operatorname{Ind}(\C)\stackrel{\sim}{\longrightarrow}\Fun<ex>{\C^\op}{\Spectra}
 \]
 that we expect should be known to experts. The takeaways are the role of the tensor
 product of presentable stable $\infty$-categories and the universal property of
 the $\operatorname{Ind}$-completion.
 \begin{align*}
   \operatorname{Ind}(\C)&\simeq\Spectra\otimes\operatorname{Ind}(\C)&\text{\Cref{thm:PrLSt-tensor_product}}\\
                         &\simeq\RFun{\Spectra^\op}{\operatorname{Ind}(\C)}&\text{\cite[Proposition~4.8.1.17]{Lur17}}\\
                         &\simeq\LFun{\operatorname{Ind}(\C)}{\Spectra^\op}^\op&\text{\cite[Proposition~5.2.6.2]{Lur09}}\\
                         &\simeq\Fun<ex>{\C}{\Spectra^\op}^\op&\text{\cite[Proposition~5.3.5.10]{Lur09}}\\
                         &\simeq\Fun<ex>{\C^\op}{\Spectra}.
 \end{align*}
 Here, $\RFun{\Spectra^\op}{\operatorname{Ind}(\C)}$ denotes the
 $\infty$-category of functors $\Spectra^\op\to\operatorname{Ind}(\C)$ that are
 right adjoints. That the Yoneda embedding $\C^\op\to\Fun{\C^\op}{\Spaces}$
 factors through the $\operatorname{Ind}$-completion of $\C$ is
 clear~\cite[Remark~5.3.5.2]{Lur09}.
\end{proof}

\begin{remark}
  \label{rmk:mapping-spectra}
  Let $\C$ be a stable $\infty$-category and $x,y\in\C$ a pair of objects. The
  existence of functorial mapping spectra is anticipated by the fact that the sequence
  of mapping spaces
  \[
    \Map[\C]{x}{\Sigma^n(y)},\qquad n\geq0,
  \]
  which, in view of the canonical identifications
  \[
    \Map[\C]{x}{\Sigma^{n-1}(y)}\simeq\Omega(\Map[\C]{x}{\Sigma^n(y)},0),
  \]
  define the spectrum $\MapSp[\C]{x}{y}\in\C$, compare with~\Cref{ex:spectra}.
  From the general formula,\footnote{See for
    example~\cite[Remark~3.9]{Gre07}.} for computing the stable homotopy groups of a spectrum
  (see~\Cref{ex:spectra-silting}) it follows that there are isomorphisms of
  abelian groups
  \[
    \pi_n^s(\MapSp[\C]{x}{y})\cong\pi_n(\Map[\C]{x}{y},0)\cong\Hom[\C]{\Sigma^n(x)}{y},\qquad
    n\geq0,
  \]
  and
  \[
    \pi_{-n}^s(\MapSp[\C]{x}{y})\cong\pi_0(\Map[\C]{x}{\Sigma^n(y)})=\Hom[\C]{x}{\Sigma^n(y)},\qquad
    n\geq0.
  \]
  The takeaway here is that negative stable homotopy groups of mapping spectra correspond to
  positive extensions.
\end{remark}

\begin{definition}[Kontsevich]
  Let $\C$ be an essentially small stable $\infty$-category.
  \begin{enumerate}
  \item We say that $\C$ is \emph{proper} if for each pair of objects $x,y\in\C$
    the mapping spectrum $\MapSp[\C]{x}{y}$ is compact.
  \item We say that $\C$ is \emph{smooth} if the diagonal $\C$-bimodule
    is a compact object of the stable $\infty$-category
    $\Fun<ex>{\C^\op\wotimes\C}{\Spectra}$.
  \end{enumerate}
\end{definition}

The following is an $\infty$-categorical analogue of \cite[Theorem~5.8]{CT12},
which treats the case of differential graded categories. It gives a conceptual
characterisation of the smooth and proper stable $\infty$-categories that are
idempotent complete. Recall that an object $V$ of a closed symmetric monoidal
($\infty$-)category $\category{V}$ is dualisable if there exist an object $V^*$ and morphisms
\[
  \operatorname{ev}\colon V^*\otimes V\longrightarrow
  \mathbf{1}\qquad\text{and}\qquad\operatorname{coev}\colon\mathbf{1}\longrightarrow
  V\otimes V^*,
\]
called \emph{evaluation} and \emph{coevaluation}, as well as commutative
triangles
\[
  \begin{tikzcd}
    V^*\simeq
    V^*\otimes\mathbf{1}\rar{\mathbf{1}\otimes\operatorname{ev}}\drar[swap]{\id}&V^*\otimes
    V\otimes V^*\dar{\operatorname{ev}\otimes\mathbf{1}}\\
    &\mathbf{1}\otimes V^*\simeq V^*
  \end{tikzcd}\qquad\text{and}\qquad
  \begin{tikzcd}
    V\simeq\mathbf{1}\otimes V\rar{\operatorname{coev}\otimes\id}\drar[swap]{\id}&V\otimes V^*\otimes V\dar{\id\otimes\operatorname{ev}}\\
    &V\otimes\mathbf{1}\simeq V
  \end{tikzcd}
\]
In this case, the right adjoint $W\mapsto\underline{\category{V}}(V,W)$ of the functor
$-\otimes V\colon\category{V}\to\category{V}$ canonically identifies with the
functor $W\mapsto W\otimes V^*$. For example, the dualisable objects in the
category of vector spaces over a field are precisely those of finite dimension.

\begin{theorem}[{\cite[Theorem~3.7]{BGT13}}]
  \label{thm:dualisable-Hmo}
  The dualisable objects of the symmetric mo\-noi\-dal $\infty$-category
  $\cat[\infty]<ex,\omega>$ are precisely the idempotent-complete stable $\infty$-categories that are
  smooth and proper. Moreover, the dual of a dualisable object $\D$ is $\D^\op$,
  so that for each idempotent-complete stable $\infty$-category $\C$ there is a
  canonical equivalence of $\infty$-categories
  \[
    \C\wotimes\D^\op\stackrel{\simeq}{\longrightarrow}\Fun<ex>{\D}{\C}
  \]
  and one has an equivalence $\D^\op\simeq\Fun<ex>{\D}{\Spectra^\omega}$ in this case.
\end{theorem}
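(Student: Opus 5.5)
Since $\cat[\infty]<ex,\omega>$ is closed symmetric monoidal with internal $\operatorname{Hom}$ given by $\Fun<ex>{-}{-}$ and unit $\Spectra^\omega$, the plan is to build explicit evaluation and coevaluation maps for $\D$ and its candidate dual $\D^\op$. Then I would identify the conditions needed for them to exist with properness and smoothness. The key is to pass to the large world via $\operatorname{Ind}$. The functor $\operatorname{Ind}\colon\cat[\infty]<ex,\omega>\to\PrLSt$ is symmetric monoidal, and I would take this as the defining property of $\wotimes$ (equivalently, $\operatorname{Ind}(\C\wotimes\D)\simeq\operatorname{Ind}(\C)\otimes\operatorname{Ind}(\D)$). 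Its image consists of compactly generated categories and functors preserving compact objects.

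First I would show that $\operatorname{Ind}(\D)$ is always dualisable in $\PrLSt$ with dual $\operatorname{Ind}(\D^\op)$. Using the equivalence $\operatorname{Ind}(\C)\simeq\Fun<ex>{\C^\op}{\Spectra}$ and the Eilenberg--Watts theorem, I get
\[
\operatorname{Ind}(\D^\op)\otimes\operatorname{Ind}(\D)\simeq\operatorname{Ind}(\D^\op\wotimes\D)\simeq\Fun<ex>{\D\wotimes\D^\op}{\Spectra}.
\]
The evaluation is the colimit-preserving extension of $(x,y)\mapsto\MapSp[\D]{x}{y}$, that is, of the diagonal bimodule. The coevaluation $\Spectra\to\operatorname{Ind}(\D)\otimes\operatorname{Ind}(\D^\op)$ sends $\SS$ to the diagonal bimodule. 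The triangle identities reduce to the co-Yoneda formula $M\simeq\operatorname{colim}$ of representables, that is, the diagonal bimodule acts as the identity under convolution, which I would check on compact generators.

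Second, I would compare with the small world. A duality datum in $\PrLSt$ descends to one in $\cat[\infty]<ex,\omega>$ exactly when both $\operatorname{ev}$ and $\operatorname{coev}$ preserve compact objects, because $\operatorname{Ind}$ is fully faithful onto compact-preserving functors and the triangle identities are then reflected. The evaluation preserves compacts iff each $\MapSp[\D]{x}{y}$ is compact, which is properness. The coevaluation preserves compacts iff the diagonal bimodule is compact, which is smoothness. This proves that smooth and proper implies dualisable with dual $\D^\op$.

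For the converse, suppose $\D$ is dualisable in $\cat[\infty]<ex,\omega>$ with some dual $\D^\vee$. Applying the symmetric monoidal functor $\operatorname{Ind}$ makes $\operatorname{Ind}(\D^\vee)$ a dual of $\operatorname{Ind}(\D)$ in $\PrLSt$, so by uniqueness of duals $\operatorname{Ind}(\D^\vee)\simeq\operatorname{Ind}(\D^\op)$ compatibly with $\operatorname{ev}$ and $\operatorname{coev}$. Passing to compact objects, and using idempotent completeness, gives $\D^\vee\simeq\D^\op$. The (co)evaluation then preserve compacts, giving smoothness and properness. The formula $\C\wotimes\D^\op\simeq\Fun<ex>{\D}{\C}$ is the general identification $\underline{\mathrm{Hom}}(V,W)\simeq W\otimes V^*$ for dualisable $V$, and taking $\C=\Spectra^\omega$ gives the last claim.

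The main obstacle is the symmetric monoidality of $\operatorname{Ind}$ and the precise identification of the coevaluation with the diagonal bimodule, including the triangle identities, since these require the coherent bimodule calculus. If necessary I would cite Lurie's treatment of compactly generated dualisability (\cite[Appendix~D]{Lur18SAG}) for these points.
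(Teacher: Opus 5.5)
The paper gives no proof of its own here and simply cites \cite[Theorem~3.7]{BGT13}. Your argument is correct and is essentially the one in that reference: you transport the question along $\operatorname{Ind}$ to compactly generated presentable stable $\infty$-categories with compact-preserving functors, use that $\operatorname{Ind}(\D)$ is always dualisable in $\PrLSt$ with dual $\operatorname{Ind}(\D^\op)$, and observe that the evaluation and coevaluation preserve compact objects exactly when $\D$ is proper, respectively smooth. Your treatment of the converse via uniqueness of duals (equivalences preserve compact objects) and your deduction of $\C\wotimes\D^\op\simeq\Fun<ex>{\D}{\C}$ from $\underline{\mathrm{Hom}}(V,W)\simeq W\otimes V^{*}$ are also fine.
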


\begin{remark}
  \label{rmk:dualisability}
  In contrast with the restrictive characterisation in~\Cref{thm:dualisable-Hmo}, all compactly-generated stable
  $\infty$-categories are dualisable as objects of the symmetric monoidal
  $\infty$-category $\PrLSt$, see~\cite[Proposition~D.2.7.3]{Lur18SAG} and
  compare with the proof of \cite[Theorem 5.8]{CT12}.\footnote{The subtle but
    crucial point is that a morphism $F\colon\C\to\D$ in $\PrLSt$ between
    compactly-generated stable $\infty$-categories, that is a colimit-preserving
    functor, is not required to preserve compact objects.} The dualisable
  objects of $\PrLSt$ can be characterised~\cite[Proposition~D.7.3.1]{Lur18SAG}
  and have attracted a significant amount of interest, not least due to their
  central role in Efimov's continuous $K$-theory~\cite{Efi24}, see
  also~\cite{Ram24,Efi25a}.
\end{remark}

\subsection{$t$-structures on stable $\infty$-categories and realisation
  functors}

The theory of $t$-structures on triangulated categories~\cite{BBDG18} extends
largely without changes to stable $\infty$-categories. We recall only a minimal
amount of the theory and refer the reader to~\cite[Section~1.2.1]{Lur17} for
more details where, as below, homological (!) indexing convention is
used.

\begin{definition}[{\cite[Definition~1.2.1.1]{Lur17}}]
  Let $\C$ be a stable $\infty$-category. A \emph{$t$-struc\-ture} on $\C$ is a
  pair of full subcategories $t=(\C_{\geq0},\C_{\leq0})$ that induce a
  $t$-structure on the triangulated homotopy category $\Ho(\C)$ in the sense
  of~\cite{BBDG18}. In particular, the \emph{heart}
  \[
    \C^\heartsuit\coloneqq \C_{\geq0}\cap\C_{\leq0}
  \]
  is an ordinary abelian category and we have the familiar adjoint pairs
  \[
    \begin{tikzcd}
      \C_{\geq0}\rar[hookrightarrow,shift left]&\C\lar[shift left]{\tau_{\geq0}}
    \end{tikzcd}\qquad\text{and}\qquad
    \begin{tikzcd}
      \C\rar[shift left]{\tau_{\leq0}}&\C_{\leq0}\lar[shift left,hook']
    \end{tikzcd}
  \]
  as well as the homological functor
  \[
    \pi_0^t\coloneqq\tau_{\geq0}\circ\tau_{\leq0}\colon\C\longrightarrow\C^\heartsuit,
  \]
  etc.
\end{definition}

An important novelty in the theory is the construction of completions of
$t$-stru\-tures, for which enhancements seem to be crucial.

\begin{defprop}[{\cite[Proposition~1.2.1.17]{Lur17}}]
  Let $\C$ be a stable $\infty$-category equipped with a $t$-structure
  $(\C_{\geq0},\C_{\leq0})$. The \emph{left completion} $\widehat{\C}$ of $\C$
  (with respect to the given $t$-structure) is the limit of the diagram
  \[
    \begin{tikzcd}
      \cdots\rar{\tau_{\leq 2}}&\C_{\leq 2}\rar{\tau_{\leq1}}&\C_{\leq
        1}\rar{\tau_{\leq0}}&\C_{\leq 0}\rar{\tau_{\leq-1}}&\C_{\leq
        -1}\rar{\tau_{\leq-2}}&\C_{\leq-2}\rar{\tau_{\leq-3}}&\cdots
    \end{tikzcd}
  \]
  The $\infty$-category $\widehat{\C}$ is stable and is endowed with a
  $t$-structure $(\widehat{\C}_{\geq0},\widehat{\C}_{\leq0})$ such that the
  exact canonical functor $\C\to\widehat{\C}$ restricts to an equivalence of
  $\infty$-categories
  $\C_{\leq0}\stackrel{\sim}{\longrightarrow}\widehat{\C}_{\leq0}$.
\end{defprop}

\begin{defprop}[{\cite[Proposition~1.2.1.19]{Lur17}, \cite[Lemma~2.15]{Ant18}}]
  \label{defprop:left-complete}
  Let $\C$ be a stable $\infty$-category equipped with a $t$-structure
  $(\C_{\geq0},\C_{\leq0})$. We say that the $t$-structure on $\C$ is \emph{left
    complete} if the canonical exact functor $\C\to\widehat{\C}$ is an
  equivalence. The notion of \emph{right-complete} $t$-structure is defined
  dually. Consider now the following conditions:
  \begin{enumerate}
  \item\label{eq:left-separated} The $t$-structure on $\C$ is left complete.
  \item\label{eq:left-nondeg}  The $t$-structure on $\C$ is left non-degenerate, that is
    \[
      \bigcap_{n\in\ZZ}\C_{\geq n}\simeq\set{0}.
    \]
  \end{enumerate}
  Then \eqref{eq:left-separated}$\Rightarrow$\eqref{eq:left-nondeg} and, if $\C$ admits countable products and the aisle
  $\C_{\geq0}$ is closed under these, then the two conditions are equivalent.
\end{defprop}

\begin{remark}
  Examples of $t$-structures on unbounded derived ($\infty$-)categories that are
  not left complete can be found in~\cite{Nee11}. In addition
  to~\cite{Lur17,Lur18SAG}, other useful sources of information on left-complete
  $t$-structures on stable $\infty$-categories and pre-triangulated differential
  graded category include~\cite{Ant18,CNS22}. In
  representation theory, left-complete $t$-structures have been used
  in~\cite{PV18}, for example.
\end{remark}

\begin{remark}
  Krause~\cite{Kra20a} and Neeman~\cite{Nee20} have developed alternative
  approaches to completing triangulated categories with striking
  applications~\cite{Nee18,BCR24}.
\end{remark}

The results in~\cite[Appendix~C]{Lur18SAG} suggest that the following class of
$t$-structures is of particular interest; we give them a name to
simplify the exposition.

\begin{defprop}
  \label{def:Grothendieck_t-structure}
  Let $\C$ presentable stable $\infty$-category. We say that a $t$-structure
  $(\C_{\geq0},\C_{\leq0})$ on $\C$ is a \emph{Grothendieck
    $t$-structure} if it is right-complete and its coaisle $\C_{\leq0}$ is closed under the formation of
  filtered colimits in $\C$.\footnote{$t$-structures satisfying the condition
    that the coaisle is closed under filtered colimits are also called
    `homotopically smashing,' see for example~\cite{SSV23}.} Under these conditions, the following statements hold:
  \begin{enumerate}
  \item \cite[Proposition~1.4.4.13]{Lur17} The aisle $\C_{\geq0}$ and the
    coaisle $\C_{\leq0}$ are presentable
    $\infty$-categories.
  \item \cite[Proposition~C.1.4.1]{Lur18SAG} The aisle $\C_{\geq0}$ is a Grothendieck prestable $\infty$-category in
    the sense of~\cite[Definition~C.1.4.2]{Lur18SAG}. In particular, filtered
    colimits in $\C_{\geq0}$ commute with finite limits.
  \item \cite[Remark~1.3.5.23]{Lur18SAG} The heart $\C^\heartsuit$ is a Grothendieck
    category.
  \end{enumerate}
  Moreover, $\C$ is equivalent to the
  $\infty$-category $\Spectra(\C_{\geq0})$ of \emph{spectrum objects in $\C$},
  which we may identify with the limit of the tower~\cite[Proposition~1.4.2.24]{Lur17}
  \[
    \begin{tikzcd}
      \cdots\rar{\Omega}&\rar{\Omega}\C_{\geq0}&\rar{\Omega}\C_{\geq0}&\rar{\Omega}\C_{\geq0}&\C_{\geq0}.
    \end{tikzcd}
  \]
\end{defprop}



The following universal property of unbounded derived $\infty$-categories
explains the relevance of Grothendieck $t$-structures. This universal property
should be understood as a vast extension of the construction of realisation
functors~\cite{Bei87,BBDG18}, see~\Cref{ex:realisation_unbounded}.

\begin{definition}
  \label{def:DerCatG}
  Let $\G$ be a Grothendieck category. The \emph{derived $\infty$-category} of
  $\G$ is the $\infty$-categorical localisation
  \[
    \DerCat{\G}\coloneqq\Ch{\G}[\mathrm{qis}^{-1}]
  \]
  of the category of chain complexes in $\G$ at the class of quasi-isomorphisms.
\end{definition}

\begin{theorem}[{\cite[Propositions~1.3.5.9 and~1.3.5.21]{Lur17},
    \cite[Proposition~C.3.1.1 and~C.3.2.1, Theorem~C.5.4.9]{Lur18SAG}}]
  \label{thm:univ-prop-DerCatG}
  Let $\G$ be a Grothendieck category. The following statements hold:
  \begin{enumerate}
  \item The derived $\infty$-category $\DerCat{\G}$ is a presentable stable
    $\infty$-category. It is equipped with the standard $t$-structure with aisle
    \[
      \DerCat{\G}_{\geq0}\coloneqq\set{X\in\DerCat{\G}}[\forall n<0,\ H_n(X)=0]
    \]
    and the coaisle
    \[
      \DerCat{\G}_{\leq0}\coloneqq\set{X\in\DerCat{\G}}[\forall n>0,\ H_n(X)=0].
    \]
    This is a non-degenerate Grothendieck $t$-structure\footnote{The standard
      $t$-structure on $\DerCat{\G}$ need not be left complete.
      \Cref{defprop:left-complete} cannot be applied since the standard aisle
      need not be closed under products (unless products in $\G$ are exact).}
     whose heart $\DerCat{\G}^\heartsuit$ is equivalent to $\G$.
   \item Let $\C$ be a presentable stable $\infty$-category equipped with a
     non-degenerate Grothendieck $t$-structure $(\C_{\geq0},\C_{\leq0})$. Then,
     restriction to the hearts induces an equivalence of $\infty$-categories
    \[
      \LFun<\text{$t$-}ex>{\DerCat{\G}}{\C}\stackrel{\sim}{\longrightarrow}\LFun<ex>{\G}{\C^\heartsuit};
    \]
    here, $\LFun<\text{$t$-}ex>{\DerCat{\G}}{\C}$ denotes the $\infty$-category
    of colimit-preserving $t$-exact functors $\DerCat{\G}\to\C$, and
    $\LFun<ex>{\G}{\C^\heartsuit}$ denotes the ordinary category of
    co\-li\-mit-preserving exact functors $\G\to\C^\heartsuit$.
  \end{enumerate}
\end{theorem}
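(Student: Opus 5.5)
The plan is to deduce both parts from the general theory of Grothendieck prestable $\infty$-categories in \cite[Appendix~C]{Lur18SAG}, using the dictionary of \Cref{def:Grothendieck_t-structure} between Grothendieck $t$-structures and their aisles. For part (1), model $\DerCat{\G}$ as the underlying $\infty$-category of the injective model structure on $\Ch{\G}$. This structure exists because $\G$ is Grothendieck, and it is combinatorial and stable. By \cite[Proposition~1.3.5.9]{Lur17} and \Cref{rmk:localisation}, the localisation $\Ch{\G}[\mathrm{qis}^{-1}]$ is then a presentable stable $\infty$-category. Its homotopy category is the classical derived category, so the standard truncations give a $t$-structure on $\Ho(\DerCat{\G})$, and hence on $\DerCat{\G}$.

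I would then check the remaining properties of this $t$-structure in turn.
\begin{itemize}
\item The heart is $\G$, via the functor $H_0$.
\item It is non-degenerate, since a complex with vanishing homology is zero.
\item The coaisle is closed under filtered colimits. Filtered colimits in $\DerCat{\G}$ can be computed by filtered colimits of complexes, and filtered colimits in $\G$ are exact (AB5), so $H_n$ commutes with them.
\item It is right complete. Every complex is the filtered colimit of its truncations $\tau_{\geq -n}$.
\end{itemize}

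For part (2), I would pass to aisles, where Lurie's classification lives. Restriction identifies colimit-preserving $t$-exact functors $\DerCat{\G}\to\C$ with colimit-preserving, left-exact functors $\DerCat{\G}_{\geq0}\to\C_{\geq0}$. This uses $\C\simeq\Spectra(\C_{\geq0})$ from \Cref{def:Grothendieck_t-structure} together with right completeness of both sides. The key input is the universal property of $\DerCat{\G}_{\geq0}$ among Grothendieck prestable $\infty$-categories. Namely, \cite[Theorem~C.5.4.9]{Lur18SAG} identifies $\DerCat{\G}_{\geq 0}$ with the \emph{separated} Grothendieck prestable $\infty$-category $\check{\mathcal{D}}(\G)_{\geq0}$ attached to $\G$. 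Among separated targets, exact colimit-preserving functors out of it are determined by their restriction to the heart \cite[Proposition~C.3.1.1, C.3.2.1]{Lur18SAG}.

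Non-degeneracy of the $t$-structure on $\C$ is exactly what makes $\C_{\geq0}$ separated, which is the hypothesis needed to apply this. Composing the equivalences then gives the stated equivalence with $\LFun<ex>{\G}{\C^\heartsuit}$.

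The main obstacle is precisely this universal property in the separated case. The "free" Grothendieck prestable $\infty$-category on $\G$ is the completion $\check{\mathcal{D}}(\G)_{\geq0}$. Showing that it agrees with $\DerCat{\G}_{\geq0}$ only after separation, and that separation is harmless for separated targets, requires Lurie's analysis of injective objects and hypercompleteness. This is the step I would treat by citing \cite[Theorem~C.5.4.9]{Lur18SAG} rather than reproving it.
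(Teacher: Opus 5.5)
This is essentially the paper's route. The paper gives no argument beyond citing these results of Lurie, and your chain is exactly how they combine: $t$-exact colimit-preserving functors on $\DerCat{\G}$ correspond to left exact colimit-preserving functors on aisles, non-degeneracy makes $\C_{\geq0}$ separated, and \cite[Theorem~C.5.4.9]{Lur18SAG} supplies the universal property of $\DerCat{\G}_{\geq0}$ among separated targets. One correction to your gloss: $\check{\mathcal{D}}(\G)$ is Lurie's anticomplete derived $\infty$-category $\operatorname{K}(\operatorname{Inj}(\G))$, which is neither a completion nor separated in general ($\DerCat{\G}_{\geq0}$ is its separation, not equal to it), and the property of $\DerCat{\G}_{\geq0}$ that matters here is that it is separated and $0$-complicial (compare \Cref{rmk:complicial}).
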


\begin{remark}
  \Cref{thm:univ-prop-DerCatG} admits variants that describe universal
  properties of corresponding variants of the unbounded derived category. For
  example, a universal property of the bounded-below derived $\infty$-category
  $\DerCat[-]{\A}$ of an essentially small abelian category $\A$ with enough
  projectives is given in~\cite[Theorem~1.3.3.2]{Lur17}.
\end{remark}

\begin{example}
  \label{ex:realisation_unbounded}
  Let $\C$ be a presentable stable $\infty$-category equipped with an Groth\-en\-dieck
  non-degenerate $t$-structure $(\C_{\geq0},\C_{\leq0})$.
  \Cref{thm:univ-prop-DerCatG} applied to the Groth\-en\-dieck category $\C^\heartsuit$
  implies that restriction to the hearts induces an equivalence
  \[
    \LFun<\text{$t$-}ex>{\DerCat{\C^\heartsuit}}{\C}\stackrel{\sim}{\longrightarrow}\LFun<ex>{\C^\heartsuit}{\C^\heartsuit}.
  \]
  Choosing a pre-image of the identity functor of $\C^\heartsuit$, we obtain an
  $\infty$-categorical realisation functor
  \[
    \operatorname{real}_t\colon\DerCat{\C^\heartsuit}\longrightarrow\C.
  \]
\end{example}

\begin{example}
  \label{ex:spectra-silting}
  By construction, the sphere spectrum $\SS\in\Spectra$ is a compact silting
  object in the sense of~\cite[Definition~4.1]{AI12}. Therefore, the
  Hoshino--Kato--Miyachi Theorem~\cite[Theorem~1.3]{HKM02} implies that
  $\Spectra$ is endowed with a non-degenerate $t$-structure, called the
  \emph{Postnikov $t$-structure}, with aisle
  \[
    \Spectra_{\geq0}\coloneqq\set{X\in\Spectra}[\forall n>0,\
    \Hom[\Spectra]{\SS}{\Sigma^n(X)}=0]
  \]
  and coaisle
  \[
    \Spectra_{\leq0}\coloneqq\set{X\in\Spectra}[\forall n<0,\
    \Hom[\Spectra]{\SS}{\Sigma^n(X)}=0].
  \]
  By \Cref{defprop:left-complete} and its dual, using again that the sphere
  spectrum is compact, this $t$-structure is left and right complete. Moreover,
  the endomorphism algebra of $\SS$ in $\Ho(\Spectra)$ is isomorphic to the ring
  of integer numbers, and hence the heart $\Spectra^\heartsuit$ of this
  $t$-structure is equivalent to the category $\operatorname{Ab}$ of abelian
  groups. The abelian groups
  \[
    \pi_n^s(X)\coloneqq\Hom[\Spectra]{\Sigma^n(\SS)}{X},\qquad n\in\ZZ,
  \]
  are called the \emph{stable homotopy groups} of the spectrum $X$. In
  particular, the stable homotopy groups $\pi_n^s(\SS)$, $n\geq0$, are the stable
  homotopy groups of spheres~\cite{IWX20}. The
  corresponding realisation functor
  \[
    \DerCat{\operatorname{Ab}}\longrightarrow\Spectra
  \]
  associates to a chain complex of abelian groups its so-called
  Eilenberg--Mac Lane spectrum. These statements hold, more generally, for any
  compactly-generated stable $\infty$-category $\C$ that is generated by a
  compact silting object $S\in\C$, in which case the heart $\C^{\heartsuit}$ is
  equivalent to the category of right modules over the ordinary ring
  $\Hom[\C]{S}{S}$. Furthermore, by the Schwede--Shipley Recognition
  Theorem~\cite[Theorem~7.1.2.1]{Lur17}, the $\infty$-category $\C$ is
  equivalent to the derived $\infty$-category of the ring spectrum of
  endomorphisms of $S$. In particular, if $T\in\C$ is a compact tilting object,
  then $\C$ is equivalent to the derived $\infty$-category of the ordinary ring
  $\Hom[\C]{T}{T}$ and the corresponding $t$-structure is the standard
  $t$-structure.
\end{example}

\begin{remark}
  \label{rmk:complicial}
  Lurie introduced the following notion of `complexity' of a
  $t$-structure \cite[Definition~C.5.3.1]{Lur18SAG}: Given an integer $n\geq$, a
  $t$-structure $(\C_{\geq0},\C_{\leq0})$ on a stable $\infty$-category $\C$ is
  \emph{$n$-complicial} if for every object of the aisle $X\in\C_{\geq0}$ there
  exist an object
  \[
    \overline{X}\in\C_{\geq0}\cap\C_{\leq n}
  \]
  and a morphism $\overline{X}\to X$ such that the induces map
  $\pi_0(\overline{T})\to\pi_0(X)$ is an epimorphisms in the heart. For example,
  it is easy to show that the standard $t$-structure on the derived
  ($\infty$-)category of an abelian category is
  $0$-complicial~\cite[C.~5.3.2]{Lur18SAG}, and the standard $t$-structure on
  the derived ($\infty$-)category of a connective differential graded algebra
  $A$ is $n$-complicial if and only if $H_*(A)$ is concentrated in degrees $0$
  to $n$ inclusive~\cite[Example~C.5.3.5]{Lur18SAG}. This notion is
  closed-related to the novel notions of Grothendieck
  $n$-category~\cite[Section~C.5.4]{Lur18SAG} and of abelian
  $n$-categories~\cite{Ste23,Moc25}. In these terms, one has the following
  fundamental characetrisations of the derived $\infty$-category of a
  Grothendieck category $\G$ and some of its variants:\footnote{It is worth
    noting that the coaisles of the canonical $t$-structures mentioned below are
    equivalent to each other as $\infty$-categories.}
  \begin{itemize}
  \item \cite[Remark~C.5.4.11]{Lur18SAG} The unbounded derived $\infty$-category
    $\DerCat{\G}$ is universal among presentable stable $\infty$-categories
    equipped with a non-de\-ge\-ne\-ra\-te $0$-complicial Groth\-en\-dieck
    $t$-structure with heart $\G$.
  \item \cite[Proposition~C.5.20 and~Theorem C.5.8.8]{Lur18SAG} The unbounded
    homotopy $\infty$-ca\-te\-go\-ry $\operatorname{K}(\operatorname{Inj}(\G))$ if
    complexes of injective objects in $\G$ is universal among presentable stable
    $\infty$-categories equipped with a  (left-)anticomplete\footnote{For a convenient
      equivalent characterisation of this property, see~\cite[Corollary
      C.5.5.10]{Lur18SAG}.} $0$-complicial Grothendieck $t$-structure with heart
    $\G$. For this reason, Lurie calls $\operatorname{K}(\operatorname{Inj}(\G))$
    the \emph{anticomplete derived $\infty$-category} of $\G$.
\item \cite[Corollary~C.5.9.7]{Lur18SAG} There is a presentable stable
  $\infty$-category $\DerCat*{\G}$, called the \emph{completed derived
    $\infty$-category of $\G$}, that is universal among presentable stable
  $\infty$-categories equipped with a left-complete weakly
  $0$-com\-pli\-cial\footnote{See~\cite[Definition~C.5.5.13]{Lur18SAG}.}
  Grothendieck $t$-structure with heart $\G$.
\end{itemize}
The above characterisations of $\DerCat{\G}$ and
$\operatorname{K}(\operatorname{Inj}(\G))$ are used to established
uniqueness-of-enhancements results in~\cite{Ant18}, see also~\cite{CNS22} for
further such results proven using differential graded enhancements. Finally, we
mention that $n$-com\-pli\-cial $t$-structures arising from simple-minded
collections have been investigated recently in~\cite{Plo26}.
\end{remark}

\subsection{$t$-injective objects and cosilting objects}

In this subsection we fix a presentable stable $\infty$-category $\C$ equipped
with a Grothendieck $t$-structure $(\C_{\geq0},\C_{\leq0})$ in the sense of
\Cref{def:Grothendieck_t-structure}.

\begin{defprop}[{\cite[Definition~C.5.7.2, Proposition~C.5.7.3]{Lur18SAG}}]
  \label{defprop:injectives}
  For an object $Q\in\C$, the following statements are equivalent:
  \begin{enumerate}
  \item The object $Q$ lies in the coaisle $\C_{\leq0}$ and, for each object
    $X\in\C_{\leq0}$ of the coaisle, the abelian group $\Hom[\C]{X}{\Sigma(Q)}$
    vanishes.
  \item For every object $X\in\C$, the canonical morphism of abelian groups
    \[
      \Hom[\C]{X}{Q}\stackrel{\sim}{\longrightarrow}\Hom[\C^\heartsuit]{\pi_0^t(X)}{\pi_0^t(Q)}
    \]
    is an isomorphism.
  \end{enumerate}
  If these equivalent conditions hold, we say that $Q$ is a \emph{$t$-injective
    object}\footnote{Lurie calls such objects simply `injective,' but we prefer
    to emphasise the dependence on the $t$-structure.} of $\C$.
\end{defprop}

\begin{remark}
  In the differential graded context, $t$-injective objects are investigated for
  example in~\cite{Sha18,GLVdB21,GRG23,SS23} under the names `injective
  differential graded
  modules,' `derived injective objects,' or `$\operatorname{Ext}$-injective
  objects of the coaisle.' It is also worth mentioning that many results on
  $t$-injective objects hold in much greater generality, often without assuming
  the existence of any enhancement, see for example~\cite{SS23}.
\end{remark}

The following result should be compared with~\cite[Theorem~0.2]{Sha18}, that
treats the case of injective differential graded modules. The case of (partial)
cosilting $t$-structures is treated in~\cite[Lemma~2.8]{AHMV17}
and~\cite[Lemma~5.5(2)]{Lak20}.

\begin{theorem}[{\cite[Theorem~C.5.7.4]{Lur17}}]
  \label{thm:injectives-coasile}
  Let $\operatorname{Inj}(\C)$ denote the $\infty$-category of $t$-injective
  objects of $\C$, and $\operatorname{Inj}(\C^\heartsuit)$ the ordinary category
  of injective objects of the Grothendieck category $\C^\heartsuit$. Then, the
  functor
  \[
    \pi_0^t\colon\Ho(\operatorname{Inj}(\C))\stackrel{\sim}{\longrightarrow}\operatorname{Inj}(\C^\heartsuit)
  \]
  is an equivalence of ordinary categories. In particular, each injective object
  of the Grothendieck category $\C^\heartsuit$ lifts to an essentially unique
  $t$-injective object of $\C$.
\end{theorem}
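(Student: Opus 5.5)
Fully faithfulness comes directly from characterisation (2) in \Cref{defprop:injectives}, and essential surjectivity requires an actual construction of lifts. For $t$-injective objects $Q,Q'$, setting $X=Q$ in condition (2) for $Q'$ gives a bijection
\[
  \Hom[\C]{Q}{Q'}\stackrel{\sim}{\longrightarrow}\Hom[\C^\heartsuit]{\pi_0^t(Q)}{\pi_0^t(Q')}.
\]
This bijection is exactly the map induced by the functor $\pi_0^t$ on $\Ho(\operatorname{Inj}(\C))$, so $\pi_0^t$ is fully faithful there. Next I will check that $\pi_0^t(Q)$ is injective in $\C^\heartsuit$. Since $Q\in\C_{\leq0}$, we have $\pi_0^t(Q)=\tau_{\geq0}Q$. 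For a monomorphism $A\hookrightarrow B$ in the heart, the bijection above with $X=A,B$ turns the restriction map $\Hom[\C^\heartsuit]{B}{\pi_0^t Q}\to\Hom[\C^\heartsuit]{A}{\pi_0^t Q}$ into $\Hom[\C]{B}{Q}\to\Hom[\C]{A}{Q}$. Let $C$ be the cokernel of $A\hookrightarrow B$. The cofibre of $A\to B$ in $\C$ is $C\in\C^\heartsuit$, so the long exact sequence continues with $\Hom[\C]{\Sigma^{-1}C}{Q}=\Hom[\C]{C}{\Sigma Q}$. This group vanishes by condition (1), because $C\in\C_{\leq0}$. Hence restriction is surjective and $\pi_0^t(Q)$ is injective.

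The main step is essential surjectivity: given an injective $I\in\C^\heartsuit$, I must produce a $t$-injective $Q$ with $\pi_0^t(Q)\cong I$. I would use Brown representability in the form available for presentable stable $\infty$-categories. Consider the functor $X\mapsto\Hom[\C^\heartsuit]{\pi_0^t(X)}{I}$ on $\Ho(\C)^\op$. It is cohomological, since $\pi_0^t$ is homological and $\Hom(-,I)$ is exact. It sends coproducts to products, since $\pi_0^t$ preserves coproducts: $\tau_{\geq0}$ is a left adjoint, and $\tau_{\leq0}$ preserves filtered colimits (the Grothendieck hypothesis) and finite sums. Because $\Ho(\C)$ is well generated, Brown representability yields an object $Q$ with $\Hom[\C]{X}{Q}\cong\Hom[\C^\heartsuit]{\pi_0^t X}{I}$ naturally in $X$. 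Setting $X=Q$ and taking the image of the identity gives a map $\pi_0^t(Q)\to I$. Setting $X=A\in\C^\heartsuit$ shows $\Hom(A,\pi_0^tQ)\cong\Hom(A,Q)\cong\Hom(A,I)$. Since $A$ ranges over the whole heart, Yoneda then shows this map is an isomorphism $\pi_0^t(Q)\cong I$, provided I first show $Q\in\C_{\leq0}$, so that $\Hom(A,Q)=\Hom(A,\tau_{\geq0}Q)$. That membership follows from $\Hom(X,Q)=0$ for $X\in\C_{\geq1}$, because $\pi_0^t$ vanishes on $\C_{\geq1}$. Condition (2) of \Cref{defprop:injectives} then holds by construction, so $Q$ is $t$-injective.

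The obstacle is the representability input. I expect to invoke Neeman's Brown representability for well-generated triangulated categories, which applies to $\Ho(\C)$ by the earlier theorem on presentable stable $\infty$-categories. Alternatively, one can argue $\infty$-categorically by representing the limit-preserving functor $\C^\op\to\Spectra$ sending $X$ to the mapping spectrum into the Eilenberg--Mac Lane-type object associated to $\Hom(\pi_0^t X,I)$. I would also check carefully that the right-completeness assumption is not needed beyond ensuring the Grothendieck properties used above. Finally, essential uniqueness of lifts is immediate from full faithfulness.
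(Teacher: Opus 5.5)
Your argument is correct. The paper gives no proof of its own here; it simply quotes Lurie, so there is nothing in the text to match step by step. What you propose is a self-contained proof that uses only tools the survey already provides:
\begin{itemize}
\item full faithfulness and the injectivity of $\pi_0^t(Q)$ come straight out of the two characterisations in \Cref{defprop:injectives};
\item existence of lifts is the classical Brown--Comenetz construction (compare the example of $I_{\QQ/\ZZ}$), carried out for an arbitrary Grothendieck $t$-structure.
\end{itemize}
The only external input is Neeman's Brown representability theorem for well-generated triangulated categories~\cite{Nee01}. It applies because $\Ho(\C)$ is well generated, by the theorem of Rosick\'y and Lurie quoted earlier in the paper. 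What this route buys is transparency about the hypotheses: only presentability of $\C$ and closure of $\C_{\leq0}$ under filtered colimits are used, and right completeness plays no role, as you suspected.

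Your final verification of condition~(2) is fine once you make one point explicit. Naturality of the representing isomorphism $\phi_X\colon\Hom[\C]{X}{Q}\cong\Hom[\C^\heartsuit]{\pi_0^t(X)}{I}$ forces $\phi_X(f)=u\circ\pi_0^t(f)$, where $u=\phi_Q(\mathrm{id}_Q)$. This is what lets the Yoneda step identify $u$ itself as the isomorphism $\pi_0^t(Q)\cong I$. It also shows that the canonical map in~(2) is bijective.

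Two corrections.

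First, in the paper's homological convention $\tau_{\geq0}$ is the \emph{right} adjoint of the inclusion $\C_{\geq0}\hookrightarrow\C$. You use it correctly as such when writing $\Hom[\C]{A}{Q}=\Hom[\C]{A}{\tau_{\geq0}Q}$, but ``$\tau_{\geq0}$ is a left adjoint'' is not a valid reason for $\pi_0^t$ to preserve coproducts. The conclusion still holds:
\begin{itemize}
\item $\C_{\geq0}$ is always closed under coproducts;
\item under the Grothendieck hypothesis so is $\C_{\leq0}$, since coproducts are filtered colimits of finite ones;
\item hence a coproduct of truncation triangles is again a truncation triangle, so both truncations, and therefore $\pi_0^t$, commute with coproducts;
\item coproducts in the heart are then computed in $\C$.
\end{itemize}
This is exactly the point where the Grothendieck hypothesis enters.

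Second, the $\infty$-categorical alternative you mention at the end is not an independent argument as stated. Writing down an exact, limit-preserving functor $\C^{\op}\to\Spectra$ whose $\pi_{-n}$ is $\Hom[\C^\heartsuit]{\pi_n^t(-)}{I}$ is essentially the same problem as producing $Q$. You should rely on Neeman's theorem instead.
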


\begin{remark}
  In terms of $t$-injective objects, one has the following simpler
  characterisations of anticomplete and completed derived $\infty$-categories of
  a Grothendieck category $\G$ (compare with~\Cref{rmk:complicial}):
   \begin{itemize}
  \item \cite[Proposition~C.5.20 and~Theorem C.5.8.8]{Lur18SAG} The unbounded
    homotopy $\infty$-ca\-te\-go\-ry $\operatorname{K}(\operatorname{Inj}(\G))$ is
  universal among presentable stable $\infty$-categories equipped with an
  anticomplete Grothendieck
  $t$-structure with heart $\G$ such that every $t$-injective object lies in the heart.
\item \cite[Corollary~C.5.9.7]{Lur18SAG} There completed derived
  $\infty$-category $\DerCat*{\G}$, called the \emph{completed derived
    $\infty$-category of $\G$} that is universal among presentable stable
  $\infty$-categories equipped with a left-complete Grothendieck $t$-structure
  with heart $\G$ such that every $t$-injective object lies in the heart.
  \end{itemize}
\end{remark}

Recall that a Grothendieck category $\G$ admits an injective cogenerator, that
is an injective object $Q\in\G$ such that, for each object $X\in\G$,
\[
  \Hom[\G]{X}{E}=0\qquad\Longleftrightarrow\qquad X=0,
\]
see for example~\cite[Theorem~9.6.3]{KS06}. The following observation relates
$t$-injective objects to cosilting objects.

\begin{proposition}
  \label{prop:cosiltings}
  Suppose that the Grothendieck $t$-structure on $\C$ is also left
  non-degenerate. Let $E\in\C$ be a $t$-injective object such that
  $\pi_0^t(E)\in\C^\heartsuit$ is an injective cogenerator. Then, there are
  equalities
  \begin{align*}
    \C_{\geq0}&=\set{X\in\C}[\forall n<0,\ \Hom[\C]{X}{\Sigma^n(E)}=0]\intertext{and}
    \C_{\leq0}&=\set{X\in\C}[\forall n>0,\ \Hom[\C]{X}{\Sigma^n(E)}=0].
  \end{align*}
  In other words, $E$ is a cosilting object of $\C$ in the sense
  of~\cite{PV18,NSZ19}. Moreover, if the stable $\infty$-category $\C$ is
  compactly generated, then $E$ is a pure-injective object of $\C$ in the sense
  of~\cite[Definition~1.1]{Kra00}.
\end{proposition}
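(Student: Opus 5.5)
The plan is to use characterisation (2) of \Cref{defprop:injectives}: for every $X\in\C$ and $n\in\ZZ$,
\[
  \Hom[\C]{X}{\Sigma^n(E)}\cong\Hom[\C]{\Sigma^{-n}(X)}{E}\cong\Hom[\C^\heartsuit]{\pi_0^t(\Sigma^{-n}X)}{\pi_0^t(E)}.
\]
Write $\pi_m^t(X)\coloneqq\pi_0^t(\Sigma^{-m}X)$ for the homotopy objects in the heart. The display then identifies $\Hom[\C]{X}{\Sigma^n(E)}$ with $\Hom[\C^\heartsuit]{\pi_n^t(X)}{\pi_0^t(E)}$. Since $\pi_0^t(E)$ is an injective cogenerator, this group vanishes if and only if $\pi_n^t(X)=0$. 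Hence the right-hand sides of the two claimed equalities are $\set{X}[\pi_n^t(X)=0\ \forall n<0]$ and $\set{X}[\pi_n^t(X)=0\ \forall n>0]$, respectively.

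It therefore remains to show that $X\in\C_{\geq0}$ if and only if $\pi_n^t(X)=0$ for all $n<0$, and dually for $\C_{\leq0}$. One direction is immediate from the definitions. For the converse in the first case, consider the fibre sequence $\tau_{\geq0}X\to X\to\tau_{\leq-1}X$. Set $Y\coloneqq\tau_{\leq-1}X$; then $Y$ has vanishing $\pi^t_n$ for all $n$, since the negative degrees agree with those of $X$.

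The step I expect to be the main obstacle is deducing $Y=0$ from the vanishing of all its homotopy objects, using non-degeneracy. For the coaisle statement the object is $\tau_{\geq1}X$, which lies in $\bigcap_n\C_{\geq n}$ once all its homotopy vanishes. To see this, argue inductively that $\tau_{\geq k}$ of it lies in $\C_{\geq k+1}$ via the sequence $\tau_{\geq k+1}\to\tau_{\geq k}\to\Sigma^k\pi_k^t$. Left non-degeneracy then kills it.

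For the aisle statement the object $Y=\tau_{\leq-1}X$ lies in $\bigcap_n\C_{\leq n}$ by the dual induction, so we need right non-degeneracy. This follows from right completeness, which is part of the Grothendieck hypothesis: right completeness gives $Y\simeq\operatorname{colim}_n\tau_{\geq -n}Y=0$. Alternatively, one can avoid this by using directly that $E\in\C_{\leq0}$ is $t$-injective and that $\Hom[\C]{Y}{\Sigma^{n}E}=0$ for all $n$, together with cogeneration. However, that requires knowing $\Sigma^nE$ detects objects of $\bigcap\C_{\leq n}$, which again reduces to right separatedness. So the plan is to invoke right completeness.

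For pure-injectivity in the compactly generated case, I would use Krause's criterion: $E$ is pure-injective if the summation map $\coprod_I E\to E$ factors through the canonical map $\coprod_I E\to\prod_I E$, for every set $I$. The approach is to note that $\C_{\leq0}$ is closed under filtered colimits, hence under coproducts, and also under products, since it is a coaisle. Therefore $\prod_IE$ and $\coprod_IE$ lie in $\C_{\leq0}$.

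By \Cref{defprop:injectives}(2), maps from any object into $E$ are determined by, and may be prescribed through, $\pi_0^t$. So it suffices to factor the summation map in the heart: $\pi_0^t(\coprod E)=\coprod\pi_0^t(E)\to\pi_0^t(\prod E)$ should be a monomorphism, and then injectivity of $\pi_0^t(E)$ extends the summation map. The required monomorphism holds because $\pi_0^t(\prod E)=\prod\pi_0^t(E)$, as $\tau_{\geq0}$ is a right adjoint and $E\in\C_{\leq0}$, and because in a Grothendieck category the map $\coprod\to\prod$ is mono (AB5). The resulting map $\prod E\to E$ restricts to summation on $\coprod E$, again by \Cref{defprop:injectives}(2).
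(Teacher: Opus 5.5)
Your proof is correct. For the two equalities it follows the paper's argument: the same chain of isomorphisms $\Hom[\C]{X}{\Sigma^n(E)}\cong\Hom[\C^\heartsuit]{\pi_n^t(X)}{\pi_0^t(E)}$, then the characterisation of the aisle and coaisle by homotopy objects. The paper cites that characterisation from BBD and Gelfand--Manin, while you prove it by hand. Your version is also more careful on one point. The hypothesis gives only left non-degeneracy, and that is what the coaisle equality needs. The aisle equality needs right non-degeneracy, which you correctly extract from the right completeness built into the definition of a Grothendieck $t$-structure. The paper just says the $t$-structure is ``non-degenerate by assumption''.

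For pure-injectivity your route genuinely differs. The paper first shows that $E$ is cosilting and then invokes \cite[Theorem~3.6]{AHMV17}. You instead check Krause's summation-map criterion from \cite{Kra00} directly. Your argument uses only three facts:
\begin{enumerate}
\item the natural bijection $\Hom[\C]{-}{E}\cong\Hom[\C^\heartsuit]{\pi_0^t(-)}{\pi_0^t(E)}$;
\item $\C_{\leq0}$ is closed under products, and also under coproducts because it is closed under filtered colimits;
\item the heart is a Grothendieck category, so $\coprod\to\prod$ is a monomorphism there, and $\pi_0^t(E)$ is injective.
\end{enumerate}
This is self-contained. It actually proves slightly more, since it never uses that $\pi_0^t(E)$ is a cogenerator or any non-degeneracy. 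So every $t$-injective object is pure-injective once $\C$ is compactly generated. The paper's route is shorter but hands the work to the cosilting literature.

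Two routine points should be made explicit in your write-up:
\begin{enumerate}
\item The identification $\pi_0^t(\coprod_I E)\cong\coprod_I\pi_0^t(E)$ needs $\tau_{\geq0}$ to commute with coproducts. This holds because $\C_{\leq -1}$ is closed under coproducts: take coproducts of the truncation fibre sequences.
\item By additivity of $\pi_0^t$, the map $\pi_0^t(\coprod_I E\to\prod_I E)$ is, under these identifications, the canonical map $\coprod\to\prod$ in the heart.
\end{enumerate}
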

\begin{proof}
  The argument is well known to experts. We include it for the convenience of
  the reader. First, observe that, for $X\in\C$ and $n\in\ZZ$,
  \Cref{defprop:injectives} yields isomorphisms of abelian groups
  \begin{align*}
    \Hom[\C^\heartsuit]{\pi_n^t(X)}{\pi_0^t(E)}&\cong\Hom[\C^\heartsuit]{\pi_0^t(\Omega^n(X))}{\pi_0^t(E)}\\
                                             &\cong\Hom[\C]{\Omega^n(X)}{E}\\
                                             &\cong\Hom[\C]{X}{\Sigma^n(E)}.
  \end{align*}
  Since $\pi_0^t(E)$ is an injective cogenerator of $\C^\heartsuit$, we conclude that
  \[
    \Hom[\C]{X}{\Sigma^n(E)}=0\quad\Leftrightarrow\quad\Hom[\C^\heartsuit]{\pi_n^t(X)}{\pi_0^t(E)}=0\quad\Leftrightarrow\quad\pi_n^t(X)=0.
  \]
  Hence, since the $t$-structure on $\C$ is non-degenerate by assumption, there
  are equalities of subcategories~(see~\cite[Proposition~1.3.7]{BBDG18}
  or~\cite[Theorem~IV.4.11]{GM03})
  \begin{align*}
    \C_{\geq0}&=\set{X\in\C}[\forall n<0,\ \pi_n^t(X)=0]\\
              &=\set{X\in\C}[\forall n<0,\ \Hom[\C]{X}{\Sigma^n(E)}=0]\intertext{and}
                \C_{\leq0}&=\set{X\in\C}[\forall n>0,\ \pi_n^t(X)=0]\\
              &=\set{X\in\C}[\forall n>0,\ \Hom[\C]{X}{\Sigma^n(E)}=0].
  \end{align*}
  When $\C$ is compactly generated, the claim on the pure-injectivity of $E$ is
  shown in~\cite[Theorem~3.6]{AHMV17}.
\end{proof}

\begin{remark}
  Suppose that the stable $\infty$-category $\C$ is compactly generated. In this
  case, it follows from~\cite[Corollary 3.8]{AHMV17}
  and~\cite[Theorem~4.6]{Lak20} that the non-degenerate Grothendieck
  $t$-structures on $\C$ are precisely those that are determined by a
  pure-injective cosilting object of $\C$ as in \Cref{prop:cosiltings}.
\end{remark}

\Cref{prop:cosiltings} provides a rich supply of cosilting objects of
topological origin. Here we only point out the following important example.

\begin{example}[{\cite[Example~C.5.7.10]{Lur18SAG}}]
  The (compact silting) $t$-structure on the stable $\infty$-category $\Spectra$
  of spectra that is induced by the sphere spectrum (\Cref{ex:spectra-silting})
  satisfies the assumptions in \Cref{prop:cosiltings}. The $t$-injective object
  $I_{\QQ/\ZZ}\in\Spectra$ lifting the injective cogenerator
  $\QQ/\ZZ\in\operatorname{Ab}\simeq\Spectra^\heartsuit$ is called the
  \emph{Brown--Comenetz dual of the sphere spectrum}. By \Cref{prop:cosiltings},
  $I_{\QQ/\ZZ}$ is a cosilting object in $\Spectra$. This example generalises to
  any $t$-structure generated by a compact silting object in a (necessarily
  compactly-generated) stable $\infty$-category, see also~\cite[Examples~6.8]{SS23}.
\end{example}

\subsection{The universal property of the bounded derived $\infty$-category}

Bounded derived categories enjoy a
pleasant universal property when promoted to stable $\infty$-categories.

\begin{definition}
  Let $\A$ be an essentially small abelian category. The \emph{bounded derived
    $\infty$-category} of $\A$ is the $\infty$-categorical localisation
  \[
    \DerCat[b]{\A}\coloneqq\Ch[b]{\A}[\mathrm{qis}^{-1}]
  \]
  of the category of bounded chain complexes in $\A$ at the class of quasi-isomorphisms.
\end{definition}

\begin{remark}
  Let $\A$ be an essentially small abelian category. The bounded derived
  $\infty$-category $\DerCat[b]{\A}$ is stable. Indeed, $\DerCat[b]{\A}$ can be
  equivalently constructed as the Verdier quotient of
  $\operatorname{K}^{\mathrm{b}}(\A)$ by its stable subcategory of acyclic
  complexes, where
  \[
    \operatorname{K}^{\mathrm{b}}(\A)\coloneqq\Ch[b]{\A}[\mathrm{heq}^{-1}]
  \]
  denotes the $\infty$-categorical localisation of $\Ch[b]{\A}$ at the class of
  homotopy equivalences~\cite[Section~7.4]{BCKW25}. Notice that
  $\operatorname{K}^{\mathrm{b}}(\A)$ is a stable $\infty$-category
  (\Cref{prop:Cisinski-Frobenius-stable-cat}).
\end{remark}

\begin{theorem}[{\cite[Corollary~7.4.12]{BCKW25}}]
  \label{thm:uniprop-Db}
  Let $\A$ be an essentially small abelian or, more generally, Quillen exact
  category. Then, for each stable $\infty$-category $\C$, restriction along the
  canonical functor $\A\to\DerCat[b]{\A}$ induces an equivalence of
  $\infty$-categories
  \[
    \Fun<ex>{\DerCat[b]{\A}}{\C}\stackrel{\sim}{\longrightarrow}\Fun<ex>{\A}{\C},
  \]
  where $\Fun<ex>{\A}{\C}$ denotes the $\infty$-category of functors $\A\to\C$
  that send (admissible) short exact sequences in $\A$ to fibre-cofibre
  sequences in $\C$.
\end{theorem}

\begin{remark}
  A generalisation of \Cref{thm:uniprop-Db} to the more general class of exact
  $\infty$-categories in the sense of Barwick~\cite{Bar15a} has been obtained
  in~\cite[Theorem~1.2]{Kle22}. The corresponding result for the closely-related
  class of exact differential graded categories is given
  in~\cite[Theorem~3.1]{Che24}.
\end{remark}

\begin{example}
  Let $\C$ be an essentially small stable $\infty$-category equipped with a
  $t$-structure $(\C_{\geq0},\C_{\leq0})$. Applying \Cref{thm:uniprop-Db} to the
  abelian category $\C^\heartsuit$ yields an equivalence of $\infty$-categories
  \[
    \Fun<ex>{\DerCat[b]{\C^\heartsuit}}{\C}\stackrel{\sim}{\longrightarrow}\Fun<ex>{\C^\heartsuit}{\C}.
  \]
  Choosing a pre-image of the inclusion functor $\C^\heartsuit\hookrightarrow\C$, we obtain an
  $\infty$-categorical realisation functor
  \[
    \operatorname{real}_t\colon\DerCat[b]{\C^\heartsuit}\longrightarrow\C.
  \]
  It is also worth mentioning that, in view of
  \Cref{prop:equivalences_of_stable_categories}, the usual criteria for
  verifying whether the above realisation functor is an equivalence (see for
  example~\cite[Theorem~3.11]{PV18}) remain valid in this context.
\end{example}

\subsection{Recollements of stable $\infty$-categories}

Similar to $t$-structures, the theory of recollments of triangulated
categories~\cite{CPS88} extends to stable $\infty$-categories almost verbatim.
We recall the necessary definitions.

\begin{defprop}[{\cite[Definition~A.8.1, Remark~A.8.19]{Lur17}}]
  \label{defprop:recollement}
  A \emph{recollement} of stable $\infty$-categories is a diagram
  \[
      \begin{tikzcd}
        \A\ar[hookrightarrow]{r}[description]{i}&\C\ar{r}[description]{p}\ar[shift
        right=0.5em]{l}[swap]{i_L}\ar[shift left=0.5em]{l}{i_R}&\B\ar[hook',shift left=0.5em]{l}{p_R}\ar[shift right=0.5em,hookrightarrow]{l}[swap]{p_L}
      \end{tikzcd}
    \]
    of stable $\infty$-categories and exact functors between them such that the
    following conditions hold:
    \begin{enumerate}
    \item There are adjoint triples $i_L\dashv i\dashv i_R$ and $p_L\dashv
      p\dashv p_R$. Moreover, the functors $i$, $p_L$ and $p_R$ are fully
      faithful.
    \item An object of $\C$ lies in the essential image of $i$ if and only if
      it lies in the kernel of $p$.
    \end{enumerate}
    Moreover, under the presence of the first condition, the second condition
    is equivalent to the following:
    \begin{enumerate}
    \item[(2')]\label{it:2prime} There is a natural isomorphism $p\circ
      i\simeq0$ and the functors $i_R$ and $p$ are jointly conservative, that is
      a morphism $f$ in $\C$ is invertible if and only if the morphisms $i_R(f)$
      and $p(f)$ are invertible.\footnote{Passing to opposite
        $\infty$-categories and functors, one may instead require the functors
        $i_L$ and $p$ to be jointly conservative.}
    \end{enumerate}
    In this context, we call the functor $F\coloneqq i_R\circ p_L\colon\B\to\A$
    the \emph{gluing functor} of the recollement.
  \end{defprop}
  \begin{proof}
    The equivalence between these two conditions is well known; we include a
    proof for the convenience of the reader since (the opposite of) condition
    (2') is used in \cite[Definition~A.8.1]{Lur17}. Suppose first that the
    condition that an object $\C$ lies in the essential image of $i$ if and only
    if it lies in the kernel of $p$ holds. We wish to show that the functors
    $i_R$ and $p$ are jointly conservative. For this, we observe that there is a
    fibre-cofibre sequence\footnote{The existence of this fibre-cofibre sequence
      can be established with an argument analogous to the one used to construct
      the corresponding decomposition triangles of a recollement of triangulated
      categories, keeping in mind that (co)limits in functor $\infty$-categories are
      computed pointwise.}
  \[
    \begin{tikzcd}
      ii_R\rar\dar\ar[phantom]{dr}[description]{\square}&\mathbf{1}_{\C}\dar\\
      0\rar&p_Rp
    \end{tikzcd}
  \]
  in the stable $\infty$-category
  $\Fun<ex>{\C}{\C}$ involving the counit and
  unit of the adjunctions $i\dashv i_R$ and $p\dashv p_R$, respectively. Since
  all functors involved are exact, given a morphism $f$ in $\C$, there is a
  fibre-cofibre sequence in $\C$ of the form
  \[
    \begin{tikzcd}
      ii_R(\cofib[f])\simeq\cofib[ii_R(f)]\rar\dar\ar[phantom]{dr}[description]{\square}&\cofib[f]\dar\\
      0\rar&\cofib[p_Rp(f)]\simeq p_Rp(\cofib[f])
    \end{tikzcd}
  \]
  The claim follows from the following elementary fact: A morphism in a stable
  $\infty$-category is invertible if and only if its cofibre
  vanishes.\footnote{Indeed, the equivalence between these conditions can be
    verified at the level of the triangulated homotopy category, where the
    equivalence is known (and elementary to show).} In view
  of the latter fibre-cofibre sequence, we conclude that $f$ is invertible if and
  only if the morphisms $ii_R(f)$ and $p_Rp(f)$ are invertible if and only if the morphisms
  $i_R(f)$ and $p(f)$ are invertible (since the functors $i$ and $p_R$ are fully
  faithful).

  Conversely, suppose that there is a natural isomorphism $p\circ i\simeq0$ and
  that the functors $i_R$ and $p$ are jointly conservative. Given an object $x\in\C$,
  consider the counit map $\varepsilon_x\colon ii_R(x)\to x$ in $\C$. Since $i$
  is fully faithful, the map $i_R(\varepsilon_x)$ is invertible in $\A$. From
  the condition
  $p\circ i\simeq 0$, it follows that $p(\varepsilon_x)$ is invertible if and
  only if $p(x)\simeq 0$. Thus, using the assumption that the functors $i_R$ and
  $p$ are jointly conservative, we conclude that $x$ lies in the essential image
  of $i$ if and only if the counit map $\varepsilon_x$ is invertible if and only
  if $x$ lies in the kernel of $p$.
\end{proof}

In the $\infty$-categorial context, the middle term of a recollement can be
reconstructed from its gluing functor. The key ingredient for this is the
following construction, which is analogue of \cite[4.1, Lemma~4.4,
Proposition~4.10]{KL15} that deals with semi-orthogonal decompositions of
pre-triangulated differential graded categories.

\begin{defprop}[{\cite[Proposition~A.8.7]{Lur17}}]
  \label{defprop:upper-triangular_gluing}
  Let $F\colon\B\to\A$ be an exact functor between stable $\infty$-categories.
  Define the upper-triangular gluing $\B\vec{\times}_F\A$ by means of the
  existence of cartesian square
  \[
    \begin{tikzcd}
      \B\vec{\times}_F\A\rar\dar\ar[phantom]{dr}[description]{\textup{PB}}&\Fun{s\to t}{\A}\dar{s^*}\\
      \B\rar[swap]{F}&\A
    \end{tikzcd}
  \]
  in the $\infty$-category $\cat<ex>$, so that the objects of
  $\B\vec{\times}_F\A$ can be identified with pairs $(b,\ \varphi\colon
  F(b)\to a)$ consisting of an object $b\in\B$ and a morphism $\varphi$ in $\A$.
  Then, the stable $\infty$-category
  $\B\vec{\times}_F\A$ is part of a recollement
  \[
    \begin{tikzcd}
\A\ar[hookrightarrow]{r}[description]{i}&\B\vec{\times}_F\A\ar{r}[description]{p}\ar[shift
        right=0.5em]{l}[swap]{i_L}\ar[shift left=0.5em]{l}{i_R}&\B\ar[hook',shift left=0.5em]{l}{p_R}\ar[shift right=0.5em,hookrightarrow]{l}[swap]{p_L}
    \end{tikzcd}
  \]
  in which, at the level of objects, the various functors are given as follows:
  \begin{align*}
    i_L(b,\varphi\colon F(b)\to a)&=\cofib[\varphi]\\
    i(a)&=(0,F(0)\simeq0\to a)\\
    i_R(b,\varphi\colon F(b)\to a)&=a\\
    p_L(b)&=(b,\id\colon F(b)\to F(b))\\
    p(b,\varphi\mathcal\colon F(b)\to a)&=b\\
    p_R(b)&=(b,F(b)\to0)
  \end{align*}
  
\end{defprop}

\begin{remark}
  In \Cref{defprop:upper-triangular_gluing}, one may instead consider the
  \emph{lower-triangular gluing} $\A\vec{\times}_F\B$, which is defined by means of the
  existence of cartesian square
  \[
    \begin{tikzcd}
      \A\vec{\times}_F\B\rar\dar\ar[phantom]{dr}[description]{\text{PB}}&\Fun{s\to t}{\A}\dar{t^*}\\
      \B\rar[swap]{F}&\A
    \end{tikzcd}
  \]
  in the $\infty$-category $\cat<ex>$, and whose objects are pairs
  $(b,\psi\colon a\to F(b))$ consisting of an object $b\in\B$ and a morphism
  $\psi$ in $\A$. Remarkably, there is an equivalence of
  stable $\infty$-categories
  \[
    \A\vec{\times}_F\B\stackrel{\sim}{\longrightarrow}\B\vec{\times}_F\A\] that,
  ultimately, is induced by the autoequivalence
  \[
    \operatorname{Cofib}\colon\Fun{s\to t}{\A}\stackrel{\sim}{\longrightarrow}\Fun{s\to
      t}{\A},\quad (f\colon a'\to a)\longmapsto(a\to\cofib[f]),
  \]
  see \cite[Lemma~1.3]{DJW21} for the (easy) proof. In~\cite{DJW21}, the
  above equivalence is used to give a very general construction of reflection
  functors based on seminal ideas of Ladkani~\cite{Lad08} and
  Groth--{\v{S}}\v{t}ov\'{i}\v{c}ek~\cite{GS16,GS16a,GS18a,GS18b}.
\end{remark}

\begin{example}
  Let $\C$ be a stable $\infty$-category. Applying
  \Cref{defprop:upper-triangular_gluing} to the identity functor of $\C$ yields
  a recollement
  \[
      \begin{tikzcd}
        \C\ar[hookrightarrow]{r}[description]{i}&\Fun{s\to t}{\C}\ar{r}[description]{p}\ar[shift
        right=0.5em]{l}[swap]{i_L}\ar[shift left=0.5em]{l}{i_R}&\C\ar[hook',shift left=0.5em]{l}{p_R}\ar[shift right=0.5em,hookrightarrow]{l}[swap]{p_L}
      \end{tikzcd}
  \]
  whose middle term is the (stable) $\infty$-category of morphisms in $\C$ and
  in which the functor $i_L$ is given by the cofibre functor
  \[
    \cofib\colon\Fun{s\to t}{\C}\longrightarrow\C,\qquad\varphi\longmapsto\cofib[\varphi].
  \]
  In this case, the functor $i_R$ is left adjoint to $p_L$ and the
  above recollement extends to an infinite ladder of recollements, compare with
  \cite[Proposition~C.3]{Kra20a} that deals with morphic enhancements of
  triangulated categories.
\end{example}

\begin{theorem}[{\cite[A.8.11]{Lur17}}]
  \label{thm:regluing}
    Consider a recollement of stable $\infty$-categories
  \[
    \begin{tikzcd}
      \A\ar[hookrightarrow]{r}[description]{i}&\C\ar{r}[description]{p}\ar[shift
      right=0.5em]{l}[swap]{i_L}\ar[shift left=0.5em]{l}{i_R}&\B\ar[hook',shift left=0.5em]{l}{p_R}\ar[shift right=0.5em,hookrightarrow]{l}[swap]{p_L}
    \end{tikzcd}
  \]
  with gluing functor ${F\coloneq i_R\circ p_L\colon\B\to\A}$.
  Then, there exists a commutative diagram
    \[
    \begin{tikzcd}
      \A\ar[hookrightarrow]{r}[description]{i}\dar{\id}&\C\dar[description]{\wr}\ar{r}[description]{p}\ar[shift
      right=0.5em]{l}[swap]{i_L}\ar[shift left=0.5em]{l}{i_R}&\B\ar[hook',shift
      left=0.5em]{l}{p_R}\ar[shift right=0.5em,hookrightarrow]{l}[swap]{p_L}\dar{\id}\\
       \A\ar[hookrightarrow]{r}[description]{i}&\B\vec{\times}_{F}\A\ar{r}[description]{p}\ar[shift
      right=0.5em]{l}[swap]{i_L}\ar[shift left=0.5em]{l}{i_R}&\B\ar[hook',shift left=0.5em]{l}{p_R}\ar[shift right=0.5em,hookrightarrow]{l}[swap]{p_L}
    \end{tikzcd}
  \]
  in which the functor
  \[
    \C\stackrel{\sim}{\longrightarrow}\B\vec{\times}_F\A,\qquad
    x\longmapsto(p(x),\ i_R(\varepsilon_x)\colon i_R(p_Lp(x))=F(p(x))\to i_R(x))
  \]
  is an equivalence of stable $\infty$-categories, where $\varepsilon\colon p_Lp\to\mathbf{1}_{\C}$ is the counit of the adjunction
  $p_L\dashv p$.
\end{theorem}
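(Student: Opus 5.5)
First construct the comparison functor $\Phi\colon\C\to\B\vec{\times}_F\A$ using the universal property of the pullback in $\cat[\infty]<ex>$ from \Cref{defprop:upper-triangular_gluing}. An exact functor into the pullback is the same as a pair of exact functors $\C\to\B$ and $\C\to\Fun{s\to t}{\A}$, together with an identification of their composites into $\A$. For the first component take $p$. For the second, apply $i_R$ pointwise to the counit $\varepsilon\colon p_Lp\to\mathbf{1}_{\C}$. This counit is a natural transformation between exact functors, so it is an exact functor $\C\to\Fun{s\to t}{\C}$; composing with $i_R$ gives an exact functor $\C\to\Fun{s\to t}{\A}$ with $x\mapsto(i_R\varepsilon_x\colon i_Rp_Lp(x)\to i_R(x))$. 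Its source is $F\circ p$ by definition of $F$, which supplies the identification of composites. This yields $\Phi$ with the stated formula on objects, and $\Phi$ is exact.

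Next, check commutativity of the diagram. One has $p\circ\Phi=p$ by construction. For $a\in\A$, $\Phi(i(a))=(pi(a),\dots)\simeq(0,0\to i_Ri(a)\simeq a)$, using $p\circ i\simeq0$ and full faithfulness of $i$; hence $\Phi\circ i\simeq i$. For $b\in\B$, $\Phi(p_L b)=(b,i_R(\varepsilon_{p_Lb}))$, and $\varepsilon_{p_Lb}$ is invertible because $p_L$ is fully faithful; hence $\Phi\circ p_L\simeq p_L$. The compatibilities with the adjoints $i_L,i_R,p_R$ then follow by uniqueness of adjoints once $\Phi$ is known to be an equivalence, since both rows are recollements with the same $i$, $p$ and $p_L$. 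Alternatively, $i_R\circ\Phi\simeq i_R$ can be read off directly from the second component.

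It remains to show that $\Phi$ is an equivalence, which is the main obstacle. My approach is to prove full faithfulness first and then essential surjectivity, working in homotopy categories where needed via \Cref{prop:equivalences_of_stable_categories}. For full faithfulness, use the decomposition fibre–cofibre sequence $ii_R(x)\to x\to p_Rp(x)$ from the proof of \Cref{defprop:recollement}, which exists in both $\C$ and the glued category. It reduces the claim to showing that $\Phi$ induces equivalences on the mapping spaces $\Map{i(a)}{y}$ and $\Map{y}{p_R(b)}$. The second of these is $\Map[\B]{p(y)}{b}$ on both sides, because $\Phi$ commutes with $p$. The first should be handled dually: use the other decomposition sequence $p_Lp(y)\to y\to ii_L(y)$, together with $\Map{i a}{p_L b}\simeq\Map[\A]{a}{F b}$ on both sides. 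Computing $\Map{p_Lb}{\Phi(-)}$ also requires knowing that $\Phi$ preserves $p_R$; this holds because $\Phi(p_Rb)=(b,i_R\varepsilon_{p_Rb})$, and $i_Rp_R\simeq0$.

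For essential surjectivity, note that every object $(b,\varphi)$ of $\B\vec{\times}_F\A$ sits in a fibre–cofibre sequence with $i(a)$ and $p_R(b)$, namely $i(a)\to(b,\varphi)\to p_R(b)$. The essential image of $\Phi$ contains $i(\A)$ and $p_R(\B)$, and it is a stable subcategory because $\Phi$ is exact and fully faithful. Hence it is closed under extensions, and $\Phi$ is essentially surjective.
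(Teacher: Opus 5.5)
The paper gives no proof of this statement; it cites \cite[A.8.11]{Lur17}. That result is formulated for recollements of $\infty$-categories with finite limits and rests on the fracture square. In the present conventions, every $x\in\C$ is the pushout of $p_Lp(x)\leftarrow iFp(x)\to ii_R(x)$, so the inverse of your $\Phi$ is $(b,\varphi\colon F(b)\to a)\mapsto p_L(b)\amalg_{iF(b)}i(a)$. Your route is genuinely different: you build $\Phi$ from the universal property of the pullback and then check full faithfulness and essential surjectivity. It uses stability essentially but is more elementary. Much of it is fine, writing primes for the functors of the glued recollement:
the construction of $\Phi$;
the identifications $p'\circ\Phi=p$, $\Phi\circ i\simeq i'$, $\Phi\circ p_L\simeq p'_L$, $i'_R\circ\Phi\simeq i_R$ and $\Phi\circ p_R\simeq p'_R$;
the extension-closure argument for essential surjectivity.

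The step that does not work as written is the reduction for full faithfulness. Decomposing the \emph{source} by $ii_R(x)\to x\to p_Rp(x)$ leaves $\operatorname{Map}(i(a),y)$ and $\operatorname{Map}(p_R(b),y)$. Decomposing the \emph{target} leaves $\operatorname{Map}(x,i(a))$ and $\operatorname{Map}(x,p_R(b))$. The two families you verify, $\operatorname{Map}(i(a),-)$ and $\operatorname{Map}(-,p_R(b))$, match neither reduction. Even if you decompose both variables, the cross term $\operatorname{Map}_{\C}(p_R(b),i(a))\simeq\operatorname{Map}_{\A}(\Sigma F(b),a)$ is covered by neither case, and it is not contractible in general. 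Relatedly, $\Phi\circ p_R\simeq p'_R$ is needed for $\operatorname{Map}(\Phi(-),\Phi p_R(b))$, not for $\operatorname{Map}(p'_L(b),\Phi(-))$.

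The repair is easy: decompose the source by $p_Lp(x)\to x\to ii_L(x)$ instead. The class of $x$ for which $\Phi$ induces equivalences $\operatorname{Map}_{\C}(x,y)\to\operatorname{Map}(\Phi x,\Phi y)$ for all $y$ is closed under shifts and extensions, by the five lemma in the triangulated homotopy category. So it suffices to treat $x=p_L(b)$ and $x=i(a)$, with no decomposition of $y$.

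For $x=p_L(b)$, the map $\operatorname{Map}_{\C}(p_L(b),y)\to\operatorname{Map}_{\B}(pp_L(b),p(y))$ is an equivalence because $p_L$ is fully faithful. Since $p=p'\circ\Phi$, it factors as the map induced by $\Phi$ followed by the map induced by $p'$. The latter is an equivalence because $\Phi p_L(b)\simeq p'_L(b)$, so the map induced by $\Phi$ is an equivalence by two-out-of-three. The case $x=i(a)$ is identical, using $i_R\simeq i'_R\circ\Phi$ and $\Phi i(a)\simeq i'(a)$. This factorisation is also what makes your phrase ``on both sides'' into a statement about the map induced by $\Phi$, rather than a mere abstract identification of the two mapping spaces.
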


\begin{example}
  Let $R$ and $S$ be rings and $M$ and $R$-$S$-bimodule. The derived
  $\infty$-category of the upper-triangular matrix ring
  $\left(\begin{smallmatrix}R&M\\0&S\end{smallmatrix}\right)$ participates in a
  recollment of stable $\infty$-categories
  \[
    \begin{tikzcd}
      \DerCat{\Mod{S}}\ar[hookrightarrow]{r}[description]{i}&\operatorname{D}\left(\begin{smallmatrix}R&M\\0&S\end{smallmatrix}\right)\ar{r}[description]{p}\ar[shift
      right=0.5em]{l}[swap]{i_L}\ar[shift left=0.5em]{l}{i_R}&\DerCat{\Mod{R}}\ar[hook',shift left=0.5em]{l}{p_R}\ar[shift right=0.5em,hookrightarrow]{l}[swap]{p_L}
    \end{tikzcd}
  \]
  whose gluing functor identifies with
  $-\otimes_R^\mathbb{L}M\colon\DerCat{\Mod{R}}\to\DerCat{\Mod{S}}$. It follows
  from \Cref{thm:regluing} that
  there is an equivalence of stable $\infty$-categories
  \[
    \operatorname{D}\left(\begin{smallmatrix}R&M\\0&S\end{smallmatrix}\right)\simeq\DerCat{\Mod{R}}\vec{\times}\DerCat{\Mod{S}}.
  \]
  This equivalence is leveraged in \cite{Jas24a} for extending the main results
  in~\cite{Lad11,May11}. For a a generalisation to `square-zero'
  upper-triangular gluings with more terms, we refer the reader
  to~\cite[Proposition~2.39]{Chr22}.
\end{example}

The upper-triangular gluing operation introduced in
\Cref{defprop:upper-triangular_gluing} is an instance of a lax
limit~\cite{GHN17}, see \cite[Lemma~5.4.7.15]{Lur09}. The following
self-referential universal property can be seen as a manifestation of this fact.

\begin{proposition}
  \label{prop:lax-limit}
  Let $F\colon\B\to\A$ be an exact functor between stable $\infty$-categories.  For each stable $\infty$-category $\C$, there is a canonical equivalence of
  stable $\infty$-categories
  \[
    \Fun<ex>{\C}{\B\vec{\times}_F\A}\stackrel{\sim}{\longrightarrow}\Fun<ex>{\C}{\B}\vec{\times}_{F_*}\Fun<ex>{\C}{\A},\quad\Phi\longmapsto(p_*(\Phi),(i_R)_*(\varepsilon_\Phi)),
  \]
  where $F_*$ denotes the post-composition functor
  \[
    F_*\colon\Fun<ex>{\C}{\B}\longrightarrow\Fun<ex>{\C}{\A},\qquad
    \Phi\longmapsto F\circ\Phi,
  \]
  and where $\varepsilon\colon p_Lp\to\mathbf{1}_{\C}$ is the counit of the adjunction
  $p_L\dashv p$.
\end{proposition}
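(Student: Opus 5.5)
The plan is to deduce the statement from the defining cartesian square of $\B\vec{\times}_F\A$ in \Cref{defprop:upper-triangular_gluing}, using that the functor $\Fun<ex>{\C}{-}$ preserves limits of stable $\infty$-categories. The first step is to observe that $\Fun<ex>{\C}{-}\colon\cat[\infty]<ex>\to\cat[\infty]<ex>$ preserves small limits. Since limits in $\cat[\infty]<ex>$ are computed in $\cat[\infty]$, this reduces to two facts. First, $\Fun{\C}{-}$ preserves limits in $\cat[\infty]$, being a right adjoint to $-\times\C$. Second, the full subcategory of exact functors inside a limit of functor categories is exactly the limit of the full subcategories of exact functors. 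The second fact holds because finite (co)limits in a limit of stable $\infty$-categories along exact functors are detected componentwise, so a functor into the limit is exact if and only if each component is. Applying this to the cartesian square defining $\B\vec{\times}_F\A$ gives a cartesian square
\[
  \begin{tikzcd}
    \Fun<ex>{\C}{\B\vec{\times}_F\A}\rar\dar&\Fun<ex>{\C}{\Fun{s\to t}{\A}}\dar{(s^*)_*}\\
    \Fun<ex>{\C}{\B}\rar[swap]{F_*}&\Fun<ex>{\C}{\A}
  \end{tikzcd}
\]
in $\cat[\infty]<ex>$.

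The second step identifies the upper-right corner. There is a canonical equivalence
\[
  \Fun<ex>{\C}{\Fun{s\to t}{\A}}\simeq\Fun{s\to t}{\Fun<ex>{\C}{\A}}.
\]
It comes from the currying equivalence $\Fun{\C}{\Fun{s\to t}{\A}}\simeq\Fun{s\to t}{\Fun{\C}{\A}}$, combined with the fact that (co)limits in $\Fun{s\to t}{\A}$ are computed pointwise. The latter implies that a functor $\C\to\Fun{s\to t}{\A}$ is exact if and only if its evaluations at $s$ and $t$ are exact. Under this identification, $(s^*)_*$ corresponds to evaluation at the source $s^*$. The square obtained in the first step is therefore literally the defining square of $\Fun<ex>{\C}{\B}\vec{\times}_{F_*}\Fun<ex>{\C}{\A}$, and we obtain an equivalence of stable $\infty$-categories between the two sides of the statement.

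The final step, which I expect to be the main obstacle, is to check that this abstract equivalence agrees with the explicit formula $\Phi\mapsto(p_*(\Phi),(i_R)_*(\varepsilon_\Phi))$. Here I would use the explicit description of the recollement functors in \Cref{defprop:upper-triangular_gluing}. The first component of the equivalence is post-composition with the projection $p\colon\B\vec{\times}_F\A\to\B$, which gives $p_*$. The second component is post-composition with the projection to $\Fun{s\to t}{\A}$. For an object $(b,\varphi\colon F(b)\to a)$, this projection is the arrow $\varphi$. One must identify $\varphi$ with $i_R$ applied to the counit $\varepsilon\colon p_Lp\to\mathbf{1}$ at $(b,\varphi)$. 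The counit is the map $(b,\id_{F(b)})\to(b,\varphi)$ with components $\id_b$ and $\varphi$. Its image under $i_R$, which is the target projection, is $\varphi\colon F(b)\to a$. So the projection to arrows equals $(i_R\varepsilon)$ naturally in the object, and hence post-composing gives $(i_R)_*(\varepsilon_\Phi)$.

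To make this identification coherent rather than objectwise, I would argue at the level of functors. The natural transformation $i_R\varepsilon\colon i_Rp_Lp=Fp\Rightarrow i_R$ is itself a functor $\B\vec{\times}_F\A\to\Fun{s\to t}{\A}$, and it is equivalent to the defining projection. This is essentially the statement of \Cref{thm:regluing} for the recollement of \Cref{defprop:upper-triangular_gluing}: the regluing equivalence is naturally equivalent to the identity. Post-composing with $\Phi$ then yields the claimed formula.
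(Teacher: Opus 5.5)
Your proposal is correct and is the paper's argument: apply $\Fun<ex>{\C}{-}$ to the defining cartesian square of $\B\vec{\times}_F\A$ and use $\Fun<ex>{\C}{\Fun{s\to t}{\A}}\simeq\Fun{s\to t}{\Fun<ex>{\C}{\A}}$. The paper leaves implicit both your justification that $\Fun<ex>{\C}{-}$ preserves limits and your matching of the explicit formula. One small caveat: \Cref{thm:regluing} does not assert that the regluing functor of the canonical recollement is the identity, and that is exactly what you want, so it does not by itself justify identifying $i_R\varepsilon$ coherently with the projection to $\Fun{s\to t}{\A}$. It is cleaner to observe that the counit of $p_L\dashv p$ can be taken to be induced on the arrow component by $\min\colon\Delta^1\times\Delta^1\to\Delta^1$. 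Its restriction to the target is then literally that projection.
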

\begin{proof}
  The existence of the claimed equivalence follows immediately from the
  existence of the induced cartesian square
  \[
    \begin{tikzcd}
      \Fun<ex>{\C}{\B\vec{\times}_F\A}\rar\dar[swap]{p_*}\ar[phantom]{dr}[description]{\text{PB}}&\Fun{s\to t}{\Fun<ex>{\C}{\A}}\dar{s^*}\\
      \Fun<ex>{\C}{\B}\rar[swap]{F_*}&\Fun<ex>{\C}{\A},
    \end{tikzcd}
  \]
  where we implicitly use the canonical identification
  \[
    \Fun<ex>{\C}{\Fun{s\to t}{\A}}\simeq\Fun{s\to t}{\Fun<ex>{\C}{\A}}.\qedhere
  \]
\end{proof}

Perhaps surprisingly, the upper-triangular gluing construction is also an
instance of a lax colimit, as witnessed by the following dual universal
property.

\begin{proposition}
  \label{prop:lax-colimit}
  Let $F\colon\B\to\A$ be an exact functor between stable $\infty$-categories.
  For each stable $\infty$-category $\C$, there is a canonical equivalence of
  stable $\infty$-categories
  \[
    \Fun<ex>{\B\vec{\times}_F\A}{\C}\stackrel{\sim}{\longrightarrow}\Fun<ex>{\A}{\C}\vec{\times}_{F^*}\Fun<ex>{\B}{\C},\quad
    \Phi\longmapsto(i^*(\Phi),\ \Phi(\varepsilon_{p_L})),
  \]
  where $F^*$ denotes the pre-composition functor
  \[
    F^*\colon\Fun<ex>{\A}{\C}\longrightarrow\Fun<ex>{\B}{\C},\qquad
    \Phi\longmapsto \Phi\circ F,
  \]
  and $\varepsilon\colon ii_R\to\mathbf{1}_{\C}$ is the counit of the adjunction
  $i\dashv i_R$.
\end{proposition}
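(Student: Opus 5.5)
The plan is to build an explicit inverse functor by means of a canonical pushout decomposition of the objects of $\B\vec{\times}_F\A$. The decomposition is a \emph{cocartesian} analogue of the fibre-cofibre sequence used in the proof of \Cref{defprop:recollement}.

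First I record what the target is. By definition of the upper-triangular gluing applied to $F^*\colon\Fun<ex>{\A}{\C}\to\Fun<ex>{\B}{\C}$, an object of $\Fun<ex>{\A}{\C}\vec{\times}_{F^*}\Fun<ex>{\B}{\C}$ is a triple $(\Psi,\Theta,\alpha)$. Here $\Psi\colon\A\to\C$ and $\Theta\colon\B\to\C$ are exact and $\alpha\colon\Psi\circ F\Rightarrow\Theta$ is a natural transformation. The functor in the statement sends $\Phi$ to $\Psi=\Phi\circ i$, $\Theta=\Phi\circ p_L$ and $\alpha=\Phi(\varepsilon_{p_L})\colon\Phi i i_R p_L=\Phi iF\to\Phi p_L$.

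Next comes the key observation. For each object $x=(b,\varphi\colon F(b)\to a)$ of $\B\vec{\times}_F\A$ there is a natural square
\[
  \begin{tikzcd}
    iF(b)\rar{\varepsilon_{p_L(b)}}\dar[swap]{i(\varphi)}&p_L(b)\dar\\
    i(a)\rar&x
  \end{tikzcd}
\]
where the right-hand map is the counit of $p_L\dashv p$ and the bottom map is the counit of $i\dashv i_R$. This square is cocartesian. To check this, I use that (co)limits in $\B\vec{\times}_F\A$ are computed componentwise, since the defining pullback lives in $\cat[\infty]<ex>$ and the projections to $\B$ and to $\Fun{s\to t}{\A}$ are exact. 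On $\B$-components the square reads $0\to b$, $0\to 0$, $b\to b$, which is a pushout. On $\A$-components it reads $F(b)\to F(b)$, $F(b)\to a$, $a\to a$, which is also a pushout. To make this coherent, I will produce the square as a functor $\B\vec{\times}_F\A\to\Fun{\square}{\B\vec{\times}_F\A}$ assembled from the units and counits, together with the identification $i_R p_L\simeq F$ from \Cref{defprop:upper-triangular_gluing}.

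With this in hand, I define the inverse. Given $(\Psi,\Theta,\alpha)$, consider the span-valued functor
\[
  x=(b,\varphi)\longmapsto\bigl(\Theta(b)\xleftarrow{\ \alpha_b\ }\Psi F(b)\xrightarrow{\ \Psi(\varphi)\ }\Psi(a)\bigr).
\]
It is the composite of the exact functors $p$, $i_R$, the structure map to $\Fun{s\to t}{\A}$, and $\Psi,\Theta,\alpha$. I then apply the exact functor $\operatorname{colim}\colon\Fun{\lrcorner}{\C}\to\C$. The result $\Phi$ is exact, because pushouts commute with finite colimits and exactness can be tested by right exactness (\Cref{defprop:exact_functors}). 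Checking the two round trips goes as follows.
\begin{enumerate}
\item Starting from $(\Psi,\Theta,\alpha)$, evaluating $\Phi$ on $i(a)=(0,0\to a)$ gives the pushout of $0\leftarrow0\to\Psi(a)$, that is $\Psi(a)$. Evaluating on $p_L(b)=(b,\id)$ gives the pushout of $\Theta(b)\leftarrow\Psi F(b)\xrightarrow{\id}\Psi F(b)$, that is $\Theta(b)$. Tracing $\varepsilon_{p_L}$ through this construction recovers $\alpha$.
\item Starting from an exact $\Phi$, exactness preserves the cocartesian square above. This exhibits $\Phi(x)$ naturally as the pushout of $\Phi p_L(b)\leftarrow\Phi iF(b)\to\Phi i(a)$, which is exactly the reconstruction of $\Phi$ from its image triple.
\end{enumerate}
Both composites are therefore naturally equivalent to the identity.

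The main obstacle I expect is coherence rather than the pointwise computations above. I must ensure that the comparison maps in the two round trips are genuine natural equivalences of functors, and that the reconstruction is functorial in morphisms of the lax-limit category, meaning compatible maps of triples. To manage this, I will phrase everything in terms of the functor $\B\vec{\times}_F\A\to\Fun{\square}{\B\vec{\times}_F\A}$ above. Restricting along it turns both constructions into instances of left Kan extension along the inclusion of the span into the square, which is fully faithful, so both round trips become the unit and counit of that Kan extension adjunction. As a fallback, one could instead pass to opposite categories and reduce to \Cref{prop:lax-limit}, using the equivalence $\A\vec{\times}_F\B\simeq\B\vec{\times}_F\A$ recorded earlier. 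I expect the direct pushout argument to be cleaner.
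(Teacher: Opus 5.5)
Your proposal is correct, and it takes a genuinely different route from the paper.

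\textbf{The paper's route.} The paper never decomposes objects of $\B\vec{\times}_F\A$. Instead, it precomposes with the canonical recollement to obtain a recollement $\Fun<ex>{\B}{\C}\xrightarrow{p^*}\Fun<ex>{\B\vec{\times}_F\A}{\C}\xrightarrow{i^*}\Fun<ex>{\A}{\C}$, with adjoints $p_R^*\dashv p^*\dashv p_L^*$ and $i_R^*\dashv i^*\dashv i_L^*$; note that the roles of $\A$ and $\B$ are swapped. The only substantial input is that the kernel of $i^*$ is the essential image of $p^*$. This holds because $p\dashv p_R$ exhibits $\B$ as the Verdier quotient by $\A$, so \Cref{defprop:Verdier_quotient} applies. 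The gluing functor of the new recollement is $p_L^*\circ i_R^*=F^*$, and \Cref{thm:regluing} produces the equivalence. The stated formula comes out because the counit of $i_R^*\dashv i^*$ at $\Phi$ is $\Phi(\varepsilon)$, which $p_L^*$ turns into $\Phi(\varepsilon_{p_L})$.

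\textbf{Your route and the trade-off.} You use the canonical cocartesian square $x\simeq p_L(b)\amalg_{iF(b)}i(a)$, which is the naturality square of $\varepsilon$ at the counit of $p_L\dashv p$. You then write the inverse explicitly as $(\Psi,\Theta,\alpha)\mapsto\bigl((b,\varphi)\mapsto\Theta(b)\amalg_{\Psi F(b)}\Psi(a)\bigr)$. This gives an explicit formula for the inverse, and it needs only elementary inputs: componentwise finite colimits in the gluing, exactness as right exactness, and pushout squares as left Kan extensions of spans. It is essentially the mechanism behind the ``finite lax additivity'' the paper cites. The price is that you must handle coherence by hand. The paper gets coherence for free from \Cref{thm:regluing}, and obtains a full recollement on exact functor categories as a by-product.

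\textbf{Two small remarks.} First, your Kan extension device cleanly gives ``restrict then reconstruct'' $\simeq\mathrm{id}$, but the other composite is not literally a unit or counit of that adjunction. It is a direct natural computation:
\begin{itemize}
\item along $i$, the span has zero left and middle corners, since $p\circ i\simeq0$;
\item along $p_L$, its right leg is invertible, since $i_Rp_L\simeq F$.
\end{itemize}
So the colimits are naturally $\Psi$ and $\Theta$, and the induced map is $\alpha$.

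Second, your fallback would not work. Passing to opposite categories keeps $\B\vec{\times}_F\A$ in the source of the functor category, whereas \Cref{prop:lax-limit} concerns exact functors \emph{into} a gluing.
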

\begin{proof}
  The statement is well-known to experts; it is also a special instance of the a
  much more general `finite lax additivity' property~\cite[Example~5.7]{CDW24}.
  We provide a short proof for the convenience of the reader. The canonical
  recollement
  \[
    \begin{tikzcd}
\A\ar[hookrightarrow]{r}[description]{i}&\B\vec{\times}_F\A\ar{r}[description]{p}\ar[shift
        right=0.5em]{l}[swap]{i_L}\ar[shift left=0.5em]{l}{i_R}&\B\ar[hook',shift left=0.5em]{l}{p_R}\ar[shift right=0.5em,hookrightarrow]{l}[swap]{p_L}
    \end{tikzcd}
  \]
  induces a diagram of functor
  categories\footnote{Using~\cite[Proposition~5.2.2.8]{Lur09} and its dual, the usual proof applies.}
  \[
    \begin{tikzcd}
\Fun{\B}{\C}\ar[hookrightarrow]{r}[description]{p^*}&\Fun{\B\vec{\times}_F\A}{\C}\ar{r}[description]{i^*}\ar[shift
        right=0.5em]{l}[swap]{p_R^*}\ar[shift left=0.5em]{l}{p_L^*}&\Fun{\A}{\C}\ar[hook',shift left=0.5em]{l}{i_L^*}\ar[shift right=0.5em,hookrightarrow]{l}[swap]{i_R^*}
    \end{tikzcd}
  \]
  that we claim restricts to a recollement
  \[
    \begin{tikzcd}
\Fun<ex>{\B}{\C}\ar[hookrightarrow]{r}[description]{p^*}&\Fun<ex>{\B\vec{\times}_F\A}{\C}\ar{r}[description]{i^*}\ar[shift
        right=0.5em]{l}[swap]{p_R^*}\ar[shift left=0.5em]{l}{p_L^*}&\Fun<ex>{\A}{\C}\ar[hook',shift left=0.5em]{l}{i_L^*}\ar[shift right=0.5em,hookrightarrow]{l}[swap]{i_R^*}.
    \end{tikzcd}
  \]
  whose gluing functor is $F^*=(i_R\circ p_L)^*=p_L^*\circ i_R^*$.

  Since the composition of exact functors is exact, it only remains to show that
  the kernel of $i^*$ agrees with the essential image of $p^*$. For this,
  observe that the adjunction $p\dashv p_R$ exhibits $\B$ as the localisation of
  $\B\vec{\times}_F\A$ at the class of morphisms that are sent to isomorphisms
  by $p$~\cite[Proposition~5.2.7.12]{Lur09}, which, since the functor $p$ is
  exact, is precisely the class of morphisms whose cone lies in the essential
  image of $i$. In other words, according to \Cref{defprop:Verdier_quotient},
  the functor $p$ exhibits $\B$ as the Verdier quotient of $\B\vec{\times}_F\A$
  by the essential image of $i$ and, therefore, the functor
  \[
    p^*\colon\Fun<ex>{\B}{\C}\longrightarrow\Fun<ex>{\B\vec{\times}_F\A}{\C}
  \]
  is fully faithful and its essential image is the kernel of the restriction
  functor $i^*$. This finishes the proof.
\end{proof}

The universal propertites of the upper-triangular gluing established in
\Cref{prop:lax-limit,prop:lax-colimit} have interesting consequences, among
which we highlight the following observation. Below, the mapping spectrum of
endomorphisms of the idendity functor should be thought as an avatar of
topological Hochschild cohomology.\footnote{Compare with~\cite[Remarks~D.1.5.6
and~D.1.5.7]{Lur18SAG}.}

\begin{proposition}
  \label{prop:HH-sq}
  Consider a recollement of stable $\infty$-categories
  \[
      \begin{tikzcd}
        \A\ar[hookrightarrow]{r}[description]{i}&\C\ar{r}[description]{p}\ar[shift
        right=0.5em]{l}[swap]{i_L}\ar[shift left=0.5em]{l}{i_R}&\B\ar[hook',shift left=0.5em]{l}{p_R}\ar[shift right=0.5em,hookrightarrow]{l}[swap]{p_L}
      \end{tikzcd}
    \]
    with gluing functor $F\coloneqq i_R\circ p_L\colon\B\to\A$. Then, there is a
    bicartesian square\footnote{The reader should compare this bicartesian
    square with Happel's long exact sequence for the Hochschild cohomology of a
    one-point (co)extension~\cite[Theorem~5.3]{Hap89} and Keller's vast generalisation
    thereof~\cite[Lemma~4.5]{Kel03} (see also~\cite[Theorem~7.7]{Kuz09}).}
  \begin{equation*}
    \label{eq:HH-square}
    \begin{tikzcd}
      \MapSp{\mathbf{1}_{\C}}{\mathbf{1}_{\C}}\rar\dar\ar[phantom]{dr}[description]{\square}&\MapSp{\mathbf{1}_\A}{\mathbf{1}_\A}\dar\\
      \MapSp{\mathbf{1}_{\B}}{\mathbf{1}_{\B}}\rar&\MapSp{F}{F}
    \end{tikzcd}
  \end{equation*}
  in the $\infty$-category of spectra, where each of the mapping spectra is computed in the relevant
  $\infty$-category of functors.
\end{proposition}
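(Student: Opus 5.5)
By \Cref{thm:regluing} we may replace $\C$ by $\B\vec{\times}_F\A$ and the given recollement by the canonical one of \Cref{defprop:upper-triangular_gluing}. The plan is to compute the mapping spectrum of endomorphisms of the identity of $\B\vec{\times}_F\A$ via the self-referential universal properties. Concretely, apply \Cref{prop:lax-limit} with $\C$ replaced by $\B\vec{\times}_F\A$ itself. This identifies $\Fun<ex>{\B\vec{\times}_F\A}{\B\vec{\times}_F\A}$ with the upper-triangular gluing
\[
  \Fun<ex>{\B\vec{\times}_F\A}{\B}\vec{\times}_{F_*}\Fun<ex>{\B\vec{\times}_F\A}{\A}.
\]
Under this identification the identity functor corresponds to the pair $(p,\ i_R(\varepsilon)\colon F\circ p\to i_R)$.

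Next I will use the general formula for mapping spectra in an upper-triangular gluing $\B'\vec{\times}_{G}\A'$. It is a pullback in the $\infty$-category of spectra, because $\B'\vec{\times}_G\A'$ is defined as a pullback in $\cat[\infty]<ex>$ and mapping spectra in $\Fun{s\to t}{\A'}$ are themselves computed by a pullback. For objects $(b,\varphi)$ and $(b',\varphi')$ this gives a cartesian square
\[
  \begin{tikzcd}
    \MapSp{(b,\varphi)}{(b',\varphi')}\rar\dar&\MapSp{a}{a'}\dar{\varphi^*}\\
    \MapSp{b}{b'}\rar{\varphi'_*\circ G}&\MapSp{G(b)}{a'}.
  \end{tikzcd}
\]
Applied to the identity, this yields a bicartesian square (squares in $\Spectra$ are bicartesian since $\Spectra$ is stable). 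Its corners are $\MapSp{\mathbf{1}}{\mathbf{1}}$, $\MapSp{i_R}{i_R}$, $\MapSp{p}{p}$ and $\MapSp{Fp}{i_R}$, with the latter three computed in the respective exact-functor categories out of $\B\vec{\times}_F\A$.

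The remaining step, which I expect to be the main bookkeeping obstacle, is to identify these three corners with $\MapSp{\mathbf{1}_\A}{\mathbf{1}_\A}$, $\MapSp{\mathbf{1}_\B}{\mathbf{1}_\B}$ and $\MapSp{F}{F}$ using the adjunctions on functor categories from the proof of \Cref{prop:lax-colimit}.

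For the $\B$-corner, $p^*$ is fully faithful by the Verdier quotient argument, so $\MapSp{p}{p}=\MapSp{p^*\mathbf{1}_\B}{p^*\mathbf{1}_\B}\simeq\MapSp{\mathbf{1}_\B}{\mathbf{1}_\B}$.

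For the $\A$-corner, $i_R=i_R^*(\mathbf{1}_\A)$ with $i_R^*$ fully faithful (it is the right adjoint of $i^*$ in the induced recollement), so $\MapSp{i_R}{i_R}\simeq\MapSp{\mathbf{1}_\A}{\mathbf{1}_\A}$.

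For the corner $\MapSp{F\circ p}{i_R}=\MapSp{p^*F}{i_R^*\mathbf{1}_\A}$, use the adjunction $i^*\dashv i_R^*$ to rewrite it as $\MapSp{i^*p^*F}{\mathbf{1}_\A}$. Since $p\circ i\simeq0$, this vanishes. That is wrong for the desired answer, so I will instead use the other adjoint, $p^*\dashv p_R^*$ or $p_L^*\dashv p^*$, appropriately. The computation $\MapSp{p^*F}{i_R^*\mathbf{1}_\A}\simeq\MapSp{F}{p_L^*i_R^*\mathbf{1}_\A}=\MapSp{F}{i_Rp_L}=\MapSp{F}{F}$ works provided $p^*\dashv p_L^*$. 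This indeed holds, since $p_L\dashv p$ induces $p^*\dashv p_L^*$ on precomposition.

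Finally I will check that the horizontal and vertical maps match the restriction maps in the statement: restriction along $i$ on the $\A$ side, and restriction along $p_L$ on the $\B$ side. Both arise as the composite with $\varepsilon$, and this is a naturality check.
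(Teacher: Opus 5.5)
Your argument is correct, but it is the dual of the paper's proof. The paper applies \Cref{prop:lax-colimit}, i.e.\ exact functors \emph{out of} the gluing. Under that equivalence $\mathbf{1}_{\C}$ corresponds to $(i,\varepsilon_{p_L})$, where $\varepsilon$ is the counit of $i\dashv i_R$. The corners are then $\MapSp{i}{i}$, $\MapSp{p_L}{p_L}$ and $\MapSp{iF}{p_L}$. These are identified with $\MapSp{\mathbf{1}_\A}{\mathbf{1}_\A}$, $\MapSp{\mathbf{1}_\B}{\mathbf{1}_\B}$ and $\MapSp{F}{F}$ by \emph{post}-composition with the fully faithful functors $i$ and $p_L$, together with the adjunction $i_*\dashv(i_R)_*$.

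You instead apply \Cref{prop:lax-limit}, i.e.\ exact functors \emph{into} the gluing. Its proof is purely formal, but you pay for this at the corners, which you must handle by \emph{pre}-composition: you need $p^*$ and $i_R^*$ to be fully faithful, and you need $p^*\dashv p_L^*$. So the localisation input that the paper spends inside the proof of \Cref{prop:lax-colimit} reappears in your corner identifications. You only need full faithfulness, though, not the identification of the kernel of $i^*$, and that follows formally from $pp_L\simeq\mathbf{1}_\B$ and $i_Ri\simeq\mathbf{1}_\A$. The two squares have the same edges out of $\MapSp{\mathbf{1}_{\C}}{\mathbf{1}_{\C}}$, namely $\alpha\mapsto p\alpha p_L$ and $\alpha\mapsto i_R\alpha i$. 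So your closing naturality check goes through, although the statement does not actually specify the maps.

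One correction. Precomposition reverses adjunctions, so $i\dashv i_R$ yields $i_R^*\dashv i^*$. Thus $i_R^*$ is the \emph{left} adjoint of $i^*$, not the right one, and there is no adjunction $i^*\dashv i_R^*$. That is exactly why your first attempt at $\MapSp{Fp}{i_R}$ returned the spurious value $0$. Full faithfulness of $i_R^*$ is unaffected, since its unit $G\to Gi_Ri$ is invertible. Your second computation, via $p^*\dashv p_L^*$, is the right one. Delete the aborted step rather than choosing an adjunction according to the answer it gives.
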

\begin{proof}
\Cref{thm:regluing} and \Cref{prop:lax-colimit} yield an equivalence of stable $\infty$-categories
\[
  \Fun<ex>{\C}{\C}\stackrel{\sim}{\longrightarrow}\Fun<ex>{\A}{\C}\vec{\times}_{F^*}\Fun<ex>{\B}{\C},\qquad\Phi\longmapsto
  (i^*(\Phi),\Phi(\varepsilon_{p_L})),
\]
that sends $\mathbf{1}_{\C}$ to the pair $(i,\varepsilon_{p_L})$ and, therefore,
there is an induced equivalence of mapping spectra\footnote{This is special case
  of the general formula for computing mapping spaces/mapping spectra in an
  upper-triangular gluing, compare with \cite[Remark~4.1]{KL15}.}
\[
  \MapSp{\mathbf{1}_\C}{\mathbf{1}_\C}\stackrel{\sim}{\to}\MapSp{(i,\varepsilon_{p_L})}{(i,\varepsilon_{p_L})}=\MapSp{i}{i}\times_{\MapSp{F^*(i)}{p_L}}\MapSp{p_L}{p_L}.
\]
In this way, we obtain a bicartesian square of mapping
spectra of the form
\[
  \begin{tikzcd}
    \MapSp{\mathbf{1}_{\C}}{\mathbf{1}_{\C}}\rar\dar\ar[phantom]{dr}[description]{\square}&\MapSp{i}{i}\dar\\
    \MapSp{p_L}{p_L}\rar&\MapSp{F^*(i)}{p_L},
  \end{tikzcd}
\]
which we claim is already the desired bicartesian square.
To see this, observe that the fully faithful functor
\[
  i_*\colon\Fun<ex>{\A}{\A}\longrightarrow\Fun<ex>{\A}{\C}
\]
induces an equivalence of mapping spectra
\[
  \MapSp{\mathbf{1}_\A}{\mathbf{1}_\A}\stackrel{\sim}{\longrightarrow}\MapSp{i}{i}.
\]
Similarly, there is an equivalence of mapping spectra
\[
  \MapSp{\mathbf{1}_\B}{\mathbf{1}_\B}\stackrel{\sim}{\longrightarrow}\MapSp{p_L}{p_L}
\]
induced by the fully faithful functor
\[
  (p_L)_*\colon\Fun<ex>{\B}{\B}\longrightarrow\Fun<ex>{\B}{\C}.
\]
Finally, the adjunction $i\dashv i_R$ affords a further equivalence of mapping
spectra
\[
  \MapSp{F^*(i)}{p_L}=\MapSp{iF}{p_L}\simeq\MapSp{F}{i_Rp_L}=\MapSp{F}{F}.
\]
This finishes the proof.
\end{proof}


\begin{acknowledgements}
  The author thanks the (other) members of the Organising Committe and of the
Scientific Committee of the tenth edition of the conference Advances in
Representation Theory of Algebras for the invitation to contribute this survey
to the proceedings of the conference. The author also thanks Xiaofa Chen, Merlin
Christ, Marvin Plogmann, Greg Stevenson for their comments on a previous version of the
survey.


\end{acknowledgements}

\printbibliography

\end{document}
